\newtheorem{theorem}{Theorem}[section]
\newtheorem{lemma}{Lemma}[section]
\newtheorem{proposition}{Proposition}[section]
\theoremstyle{definition}
\newtheorem{definition}{Definition}[section]
\theoremstyle{remark}
\newtheorem{remark}{Remark}[section]
\numberwithin{equation}{section}
\def\f{\frac}
\def\hf1{^\f{1}{1-\xi^2}}
\def\be{\begin{equation}}
\def\en{\end{equation}}
\def\bs{\begin{split}}
\def\es{\end{split}}
\def\ba{\begin{align}}
\def\ea{\end{align}}
\title[Large deviations of invariant measure for 3D stochastic Navier-Stokes equations]
{Large deviations of invariant measure for the 3D stochastic hyperdissipative Navier-Stokes equations}
\author[Z. Qiu]{Zhaoyang Qiu}
\address{School of Applied Mathematics, Nanjing University of Finance and Economics, Nanjing, 210046, China.}
\email{zhqmath@163.com}
\author[H. Liu]{Hui liu}
\address{School of Mathematical Sciences,
Qufu Normal University, Qufu, Shandong, 273155, China.}
\email{liuhuinanshi@qfnu.edu.cn}
\author[ C. Sun]{Chengfeng Sun}
\address{School of Applied Mathematics, Nanjing University of Finance and Economics, Nanjing, 210046, China.}
\email{chengfengsunnufe@163.com}
\keywords{Large deviations of invariant measure,  uniform large deviations, unique ergodicity, stochastic 3D Navier-Stokes equations}
\subjclass[2010]{35Q35, 76D05, 35R60, 60F10}
\date{\today}
\begin{document}
\begin{abstract}
In this paper, we consider the large deviations of invariant measure for the 3D stochastic hyperdissipative Navier-Stokes equations driven by additive noise. The unique ergodicity of invariant measure as a preliminary result is proved using a deterministic argument by the exponential moment and exponential stability estimates. Then, the uniform large deviations is established by the uniform contraction principle. Finally, using the unique ergodicity and the uniform large deviations results, we prove the large deviations of invariant measure by verifying the Freidlin-Wentzell large deviations upper and lower bounds.
\end{abstract}

\maketitle
\section{Introduction}

Dynamical systems influenced by a random fluctuation contribute to the uncertainty in modelling the fluid systems, meanwhile the uncertainty and randomness have a far-reaching impact for the evolving of the dynamical phenomenon, especially in geophysical fluid, climate dynamics, etc. Therefore, randomness must be taken into account, considering the effect on the evolution of long time dynamical behaviour of systems. One of the important schemes leading to the study of various dynamical behaviour for random processes is dynamical systems subject to the effect of random noise. Asymptotics in the theory of random processes include results of the types of both the deviation principle and the long time statistics. More precisely, it is often natural to consider the asymptotic relationship between the distributions of solution to the stochastic system and the  deterministic analogy, that is, studying the limit of small random perturbations. We call it the deviation principle arising from the quantum mechanics, see \cite{Fre} for more physical backgrounds.

In this paper, we consider the large deviations of invariant measure for the 3D stochastic hyperdissipative Navier-Stokes equations driven by additive noise:
\begin{eqnarray}\label{Equ1.1}
\left\{\begin{array}{ll}
\!\!\!\partial_t \mathbf{u}+\upsilon(-\Delta)^{\alpha} \mathbf{u}+(\mathbf{u}\cdot \nabla)\mathbf{u}+\nabla p=\sqrt{\varepsilon}G\frac{dW}{dt},\\
\!\!\!\nabla\cdot \mathbf{u}=0,\\
\end{array}\right.
\end{eqnarray}
where $\mathbf{u}$ is velocity field, $p$ is pressure, $W$ is a cylindrical Wiener process, $G$ is a noise intensity operator defined later, $\varepsilon$ is the small perturbation parameter. The coefficient $\upsilon$ is the viscosity of the fluid, we assume that $\upsilon\equiv1$ here.

We prescribe the initial data
$$\mathbf{u}(0)=\mathbf{u}_0,$$
and the periodic boundary
$$\mathbb{T}^3=[0,2\pi]^3.$$
Then, a fractional power of the Laplace transform, $(-\Delta)^{\alpha}$, is defined through the Fourier transform
$$\widehat{(-\Delta)^{\alpha}} \mathbf{u}=|k|^{2\alpha}\hat{\mathbf{u}},$$
for the meanings of notations $k, \hat{\mathbf{u}}$, see section 2. Dissipation corresponding to a fractional power of Laplacian arises
from modeling real physical phenomena, our motivation for considering equations \eqref{Equ1.1} is mainly mathematical and the goal is to understand that the parameter brings the regularity effect, making the study on the long-time dynamical behaviour possible. Since when $\alpha=1$, equations \eqref{Equ1.1} reduce to the usual stochastic Navier-Stokes equations. It is well-known that the Leray weak solution of the 3D Navier-Stokes equations is non-uniqueness for both deterministic and stochastic cases, see \cite{bv, mart}. A remarkable feature of dynamical system with uniqueness is the memoryless property,
thus, the Markov property. Therefore, for the usual 3D stochastic Navier-Stokes equations, it seems that it is really challenging to study the long-time statistics of transition semigroup due to lack of the Markov property. Therefore, we consider the hyperdissipative critical case, thus, ${\bf\alpha=\frac{5}{4}}$.

We next review the research progress on the stochastic Navier-Stokes equations. The pioneering work \cite{B1} proved the existence of a unique weak solution  of the system under the influence of additive noise. Then, for the multiplicative noise case, Flandoli \cite{F1} proved the existence and uniqueness of martingale and stationary solution. Afterwards, abundant outstanding results came to the force, see \cite{bre,cww,ww} and  the references therein.
The invariant measure and ergodicity were studied in \cite{F2} for 2D case. After that, Da Prato and Debussche \cite{Da2} extended the ergodicity to the 3D case. See also \cite{brze3, Hai-Mat,F2,huang} for more results. Deviation principle as another important research project was also investigated for the stochastic Navier-Stokes equations in many publications. Papers \cite{Bud, Dup} developed the weak convergence method which transforms the proof of deviation principle into the basic qualitative property argument, simplifying the process of proof. Then, the weak convergence method as a powerful tool is widely used for the research on deviation principle of many fluid dynamical models, see \cite{Millet,Chueshov,sun, D1,Duan,WZZ, XZ, ZZ, R1} for more results of the application in the Navier-Stokes equations and other related fluid dynamical models.

The study of the large deviations of invariant measure could trace back to thirty years ago, Sowers \cite{sow} established the large deviations of invariant measure for the reaction-diffusion equation with non-Gaussian perturbations.  Then, the result was generalized to the reaction-diffusion equation with multiplicative noise and non-Lipschitz reaction term by R\"{o}ckner and Cerrai \cite{cr}. Following the strategy of these papers, Brze\'{z}niak et.al\cite{bc} proved the large deviations of invariant measure for 2D stochastic Navier-Stokes equations on the torus in $L^2$ with rate function defined by
$$U(x)=\inf\left\{I(\mathbf{u}):\mathbf{ u}\mathbf{}\in C([0,T];H), \mathbf{u}(0)=0, \mathbf{u}(T)=x\right\},$$
for more details of the quasi-potential, see \cite{bcm}.
After that, Cerrai and Paskal \cite{cp} generalized the result to the 2D Navier-Stokes equations with vanishing noise correlation on a torus. All these results were only related to the case of trivial limiting dynamics, thus, the global attractor of the limiting equations is a singleton. Furthermore, Martirosyan\cite{mar} considered the limit equations in a bounded domain with the Dirichlet boundary condition having arbitrary finite number of equilibrium, proved that the family of invariant measure was exponential tightness which implies the large deviations upper bound, and when the limit set of equations is singleton, the large deviations lower bound follows automatically.

Our goal of this paper is to prove that the family of invariant measures of the 3D stochastic hyperdissipative Navier-Stokes equations satisfies the large deviations in $L^2$ with rate function $U(x)$.

The basic qualitative properties: existence and uniqueness of the 3D hyperdissipative Naiver-Stokes equations including the stochastic version and the deterministic skeleton equations, as the preliminary results, are proved at the beginning. Unlike the 2D case on a torus, the orthogonality of $A\mathbf{u}$ and $B(\mathbf{u})$ in $L^2$ which is deeply dependent on \cite{bc,cp} does not hold in 3D case. By exploring the advantage of the hyperdissipative construction, we could control the nonlinear term and obtain the high-order regularity estimate with time-weight in $H^1$.

With the existence and uniqueness in hands, we next prove the existence of a unique ergodic invariant measure. For the existence of invariant measure, we use a generalized Krylov-Bogoliubov method introduced by Maslowski-Seidler that relaxes the Feller condition to sequentially weak Feller, which simplifies the proof. Therefore, we only need the weak compactness and regularity estimate of $H$ itself is adequate. Then, we prove the unique ergodicity of invariant measure by a deterministic argument, which relies on the exponential moment and exponential stability estimates. Here, since the noise has a small perturbation coefficient, we do not require that the viscosity of fluid is large. Therefore, it is reasonable to assume $\upsilon\equiv 1$.

Another important factor is to obtain the large deviations of the family of solutions, uniformly with respect to initial data $\mathbf{u}_0$ in a bounded set. Using the idea of \cite{cp,sbd}, we split the equations into a linear stochastic equations and a random Navier-Stokes equations. For the random 3D Navier-Stokes equations, we can only show that the solution is local Lipschitz continuous of the solution of the linear equations in a bounded set $H^\frac{5}{4}$. Following the uniform contraction principle, the large deviations in $C([0,T]; H^\frac{5}{4})$ and the exponential tightness of solutions of the linear equations are required.
In order to establish the large deviations of the distribution of solutions to linear equations, we introduce a new operator $\Gamma$ to deal with the control term, and prove that operator $\Gamma$ is compact of the topology $C([0,T]; H^\frac{5}{4})$ with respect to the $L^2$-weak topology. Then, combining the weak convergence method and the compactness of $\Gamma$, we show the compactness of the distribution of solutions. To prove the exponential tightness, we will design a appropriate Lyapunov function, then use some tricks to control the extra terms arising from martingale part, see Lemma 5.2.

Finally, using the ergodicity of invariance measure and the uniform large deviations results, we can verify the Freidlin-Wentzell large deviations upper and lower bounds by the arguments developed by \cite{sow}, obtain the large deviations of invariant measure in $L^2$.

We arrange the rest of this paper as follows. In section 2, we introduce all the functional settings
and the well-posedness results for the equations \eqref{Equ1.1} and the corresponding skeleton equations. In section 3, we prove the ergodicity of invariance measure by combining the Maslowski-Seidler theory with a deterministic argument. Applying the weak convergence method, we establish the large deviations in section 4 for the linear equations. The uniform large deviations is proved in section 5 by uniform contraction principle. The main result of the large deviations of invariant measure is obtained in section 6.

Throughout the paper, universal constants depending only on the dimension, or initial data and other parameters are denoted by $C$, etc. which may be different at each occurrence.

\section{Preliminaries and well-posedness}

\subsection{Preliminaries}

In order to formulate the Navier-Stokes equations, we introduce the standard space theory. On the torus $\mathbb{T}^3$, for the $L^2$ functional space, it is convenient to work with the Fourier expansion of $\mathbf{u}$,
\begin{align*}\mathbf{u}=\sum_{k\in \mathbb{Z}^3}\hat{\mathbf{u}}_ke^{2\pi {\rm i} k\cdot x}, ~\overline{\hat{\mathbf{u}}_{-k}}=\mathbf{u}_k,\end{align*}
with norm
$$\|\mathbf{u}\|_{L^2}^2=\sum_{k\in \mathbb{Z}^3}|\hat{\mathbf{u}}_k|^2.$$
However, in other $L^p, p>2$ spaces the situation is somewhat more complicated, we refer to \cite[Chapter 1.5]{RR} for more details.

For any integer $s> 0$, $H^{s}$ denotes the Sobolev space with functions $\partial_x^\alpha\mathbf{u} \in L^2$ for positive integer $0<\alpha\leq s$, endowed with the norm
$$\|\mathbf{u}\|_{H^{s}}^2=\sum_{k\in \mathbb{Z}^3_0}|k|^{2s}|\hat{\mathbf{u}}_k|^2.$$

We use the operator
$$\widehat{(\Lambda^\alpha \mathbf{u})}=|k|^{\alpha}\hat{\mathbf{u}},$$
for any $\alpha> 0$, $\hat{\mathbf{u}}$ is the Fourier transform of $\mathbf{u}$. Then, we have $\|\Lambda^\alpha \cdot\|_{L^2}$ is equivalent to $\|\cdot\|_{H^\alpha}$.
We denote by $H^{-\alpha}$ the dual of $H^{\alpha}$ for any $\alpha>0$, with norm
$$\|\mathbf{u}\|_{H^{-\alpha}}^2=\sum_{k\in \mathbb{Z}_0^3}|k|^{-2\alpha}|\hat{\mathbf{u}}_k|^2.$$

 Denote
$$H:=\left\{\mathbf{u}\in L^2: {\rm div} \mathbf{u}=0, \int_{\mathbb{T}^3}\mathbf{u}dx=0\right\}$$
with the norm
$$\|\mathbf{u}\|^2_{H}=\|\mathbf{u}\|^2_{L^2}=(\mathbf{u},\mathbf{u}),$$
where $(\cdot, \cdot)$ means the inner product of $L^2$. Let $H_w$ be the Hilbert space $H$ with the weak topology. 

Introduce
$$C([0,T];H_w):={\rm the~ space ~of}~ H ~{\rm valued ~weakly ~continuous ~function},$$
endowed with the weak topology such that the mapping
$$\mathbf{u}\mapsto \langle \mathbf{u},h\rangle$$
is continuous for any $h\in H$, which is a quasi-Polish space.  Denote
$$V:=\left\{\mathbf{u}\in H: \frac{\partial \mathbf{u}}{\partial x}\in L^2\right\}$$
with the norm
$$\|\mathbf{u}\|^2_{V}=\|\nabla \mathbf{u}\|^2_{L^2}=\sum_{i,j=1}^3\int_{\mathbb{T}^3}\left|\frac{\partial \mathbf{u}_j}{\partial x_i}\right|^2dx.$$
The operator $A$ is defined by
$$A\mathbf{u}:=-P\Delta \mathbf{u},~ \mathbf{u}\in D(A)=H^{2}\cap V,$$
where $P$ is the Helmhotz–Hodge projection operator from $L^2$ into $H$. Define by the operator $A^\alpha$
$$A^\alpha\mathbf{u}:=P(-\Delta)^\alpha \mathbf{u}, ~\mathbf{u}\in D(A^\alpha)=H^{2\alpha}.$$
More details on $A^\alpha$  can be found in Chapter 5 of Stein's book \cite{ste}.

Denote by $V'$ the dual of $V$, then we have
$$V\subseteq H \subseteq V',$$
in this way, we could consider $A$ as a bounded operator from $V$ into $V'$. Applying the theory of symmetric, compact operator for $A^{-1}$, one can prove the existence of an orthonormal basis $\{e_{k}\}_{k\geq 1}$ for $H$ of eigenfunctions of $A$, and a sequence of positive eigenvalues $\lambda_k=|k|^2$ with $\lambda_k\nearrow\infty$ as $k\rightarrow\infty$, that is,
$$Ae_k=|k|^2e_k,~{\rm for} ~k\in \mathbb{Z}_0^3.$$
For $\mathbf{u}\in H^\beta$, we have the Poincar\'{e} inequality
$$\|\Lambda^\alpha\mathbf{u}\|_{H}^2\leq \|\Lambda^\beta \mathbf{u}\|_{H}^2,~ {\rm if}~ \alpha\leq \beta. $$

Define the bilinear map
$$B(\mathbf{u},\mathbf{v})=P(\mathbf{u}\cdot\nabla) \mathbf{v},$$
and define the tri-linear map $b(\cdot, \cdot, \cdot): V\times V\times V\rightarrow \mathbb{R}$ by
$$b(\mathbf{u},\mathbf{v},\mathbf{w})=\int_{\mathbb{T}^3}\mathbf{u}\cdot\nabla \mathbf{v}\cdot \mathbf{w}dx, ~ \mathbf{u},\mathbf{v},\mathbf{w}\in V,$$
then,
$$(B(\mathbf{u},\mathbf{v}), \mathbf{w})=b(\mathbf{u},\mathbf{v},\mathbf{w}).$$
Integrating by parts,
$$(B(\mathbf{u},\mathbf{v}), \mathbf{w})=-(B(\mathbf{u},\mathbf{w}), \mathbf{v}), ~\mathbf{u}, \mathbf{v}, \mathbf{w}\in V$$
which implies
\begin{align}\label{2.1}b(\mathbf{u},\mathbf{v},\mathbf{v})=0, ~\mathbf{u},\mathbf{v} \in V.\end{align}

After an application of operator $P$ to equations \eqref{Equ1.1}, we could rewrite it by abstract form as an evolutionary dynamical system
\begin{equation}\label{e1*}
d\mathbf{u}+A^\frac{5}{4}\mathbf{u}dt+B(\mathbf{u},\mathbf{u})dt=\sqrt{\varepsilon}GdW,
\end{equation}
with initial data $\mathbf{u}(0)=\mathbf{u}_0$.

Let $\mathcal{S}:=(\Omega,\mathcal{F},\{\mathcal{F}_{t}\}_{t\geq0},\mathrm{P}, W)$ be a fixed stochastic basis and $(\Omega,\mathcal{F},\mathrm{P})$ a complete probability space. $\{\mathcal{F}_{t}\}_{t\geq0}$ is a filtration satisfying all usual conditions.
Denote by $L^p(\Omega; L^q(0,T;X)), p\in [1,\infty), q\in [1, \infty]$ the space of processes with values in $X$ defined on $\Omega\times [0,T]$ such that

i. $\mathbf{u}$ is measurable with respect to $(\omega, t)$, and for each $t$, $\mathbf{u}(t)$ is $\mathcal{F}_{t}$-measurable;

ii. For almost all $(\omega, t)$, $\mathbf{u}\in X$ and
\begin{align*}
\|\mathbf{u}\|^p_{L^p(\Omega; L^q(0,T;X))}\!=\!
\begin{cases}
\mathrm{E}\left(\int_{0}^{T}\|\mathbf{u}\|_{X}^qdt\right)^\frac{p}{q},~& {\rm if}~q\in [1,\infty),\\
\mathrm{E}\left(\sup_{t\in[0,T]}\|\mathbf{u}\|^p_X\right),~ & {\rm if}~q=\infty.
\end{cases}
\end{align*}
Here, $\mathrm{E}$ denotes the mathematical expectation.

Assume that $Q$ is a linear positive operator on the Hilbert space $H$, which is trace and hence compact. Let $W$ be a Wiener process defined on the Hilbert space $H$ with covariance operator $Q$, which is adapted to the complete, right continuous filtration $\{\mathcal{F}_{t}\}_{t\geq 0}$. Let $\{\mathbf{e}_{k}\}_{k\geq 1}$ be a complete orthonormal basis of $H$ such that $Q\mathbf{e}_{i}=\lambda_{i}\mathbf{e}_{i}$, then $W$ can be written formally as the expansion $W(t,\omega)=\sum_{k\geq 1}\sqrt{\lambda_{k}}\mathbf{e}_{k}W_k(t,\omega)$, where $\{W_{k}\}$ is a sequence independent standard 1-D Brownian motions, see \cite{Zabczyk} for more details.

Let $H_{0}=Q^{\frac{1}{2}}H$, then $H_{0}$ is a Hilbert space with the inner product
\begin{equation*}
\langle h,g\rangle_{H_{0}}=\langle Q^{-\frac{1}{2}}h,Q^{-\frac{1}{2}}g\rangle_{H},~ \forall~~ h,g\in H_{0},
\end{equation*}
with the induced norm $\|\cdot\|_{H_{0}}^{2}=\langle\cdot,\cdot\rangle_{H_{0}}$. The imbedding map $i:H_{0}\rightarrow H$ is Hilbert-Schmidt and hence compact operator with $ii^{\ast}=Q$. Now considering another separable Hilbert space $X$ and let $L_{Q}(H_{0},X)$ be the space of linear operators $S:H_{0}\rightarrow X$ such that $SQ^{\frac{1}{2}}$ is a linear Hilbert-Schmidt
operator from $H$ to $X$,  endowed with
  the norm $$\|S\|_{L_{Q}}^{2}=tr(SQS^{\ast}) =\sum\limits_{k}| SQ^{\frac{1}{2}}\mathbf{e}_{k}|_{X}^{2}.$$
 Set $$L_2(H,X)=\left\{SQ^{\frac{1}{2}}: \, S\in L_{Q}(H_{0},X)\right\},$$ the norm is defined by
  $\|f\|^2_{L_2(H,X)}  =\sum\limits_{k}|f\mathbf{e}_{k}|_{X}^{2}$.

In this section, we assume that
$G\in L_2(H; H)$.
In section 4, we need the further assumption of $G\in L_2(H; H^\frac{5}{4})$. A typical example of $G$ could be taken the form
$$G=(I+A^\beta)^{-1},$$
for $\beta>0$.

\subsection{Well-posedness for stochastic equations} In this subsection, we will establish the basic qualitative properties: existence, uniqueness and continuity dependence of solution to equations \eqref{e1*}.  In the following, we focus on the necessary estimates.
\begin{lemma}\label{lem2.2*} Suppose that $\mathbf{u}^\varepsilon$ is the solution of equations \eqref{e1*}, then it satisfies for any $p\geq 1$
\begin{align}\label{2.10}
\mathrm{E}\sup_{t\in [0,T]}\|\mathbf{u}^\varepsilon\|_H^{2p}+\mathrm{E}\left(\int_{0}^{T}\|\mathbf{u}^\varepsilon\|_{H^\frac{5}{4}}^2dt\right)^p\leq C(T, p, \| G\|_{L_2(H,H)}, \|\mathbf{u}_0\|_{H}).
\end{align}
Particularly, for $p=1$, we have
\begin{align}\label{2.12}
\mathrm{E}\sup_{t\in [0,T]}\|\mathbf{u}^\varepsilon\|_H^{2}+\mathrm{E}\int_{0}^{T}\|\mathbf{u}^\varepsilon\|_{H^\frac{5}{4}}^2dt\leq TC\| G\|_{L_2(H,H)}+\|\mathbf{u}_0\|_{H}^2,
\end{align}
where $C$ is independence of $T, \varepsilon$, this means the bound is linear function with respect to $t$, which will be applied for the tightness argument of time average measure set.
\end{lemma}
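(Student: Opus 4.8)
The plan is to derive \eqref{2.10} by an It\^o-formula computation on $\|\u^\ve\|_H^{2p}$, followed by a Burkholder–Davis–Gundy estimate on the martingale term and a Gr\"onwall-type argument. First I would apply the It\^o formula to the functional $\x\mapsto\|\x\|_H^2$ along the solution of \eqref{e1*}. Using the orthogonality \eqref{2.1}, namely $(B(\u,\u),\u)=b(\u,\u,\u)=0$, the nonlinear term drops out, and the dissipative term gives $-2\|A^{5/8}\u^\ve\|_H^2=-2\|\u^\ve\|_{H^{5/4}}^2$ (up to the equivalence of $\|\Lambda^{5/4}\cdot\|_{L^2}$ and $\|\cdot\|_{H^{5/4}}$). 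Thus
\begin{align*}
d\|\u^\ve\|_H^2+2\|\u^\ve\|_{H^{5/4}}^2\,dt=\ve\|G\|_{L_2(H,H)}^2\,dt+2\sqrt{\ve}\,(\u^\ve,G\,dW).
\end{align*}
For the $p=1$ claim \eqref{2.12}, one integrates in $t$, takes expectations (the stochastic integral is a martingale once one knows $\u^\ve\in L^2(\Omega;L^2(0,T;H))$, which follows by a standard stopping-time/localization argument), and the inequality with a constant $C$ independent of $T$ and $\ve$ (and linear in $T$) is immediate since $\ve\le 1$. To get the $\sup$ in time inside the expectation, I would apply BDG to the martingale part: $\mathrm{E}\sup_{t\le T}\bigl|\int_0^t(\u^\ve,G\,dW)\bigr|\le C\,\mathrm{E}\bigl(\int_0^T\|\u^\ve\|_H^2\|G\|_{L_2(H,H)}^2\,dt\bigr)^{1/2}$, then use Young's inequality to absorb a fraction of $\mathrm{E}\sup_{t\le T}\|\u^\ve\|_H^2$ into the left-hand side, leaving the bound in terms of $T$, $\|G\|_{L_2(H,H)}$ and $\|\u_0\|_H$.

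For general $p\ge 1$, I would apply the It\^o formula to $\va(\x)=\|\x\|_H^{2p}$, obtaining
\begin{align*}
d\|\u^\ve\|_H^{2p}+2p\|\u^\ve\|_H^{2p-2}\|\u^\ve\|_{H^{5/4}}^2\,dt
&=p\ve\|\u^\ve\|_H^{2p-2}\|G\|_{L_2(H,H)}^2\,dt\\
&\quad+2p(p-1)\ve\|\u^\ve\|_H^{2p-4}\|G^*\u^\ve\|_H^2\,dt+dM_t,
\end{align*}
where $M_t=2p\sqrt{\ve}\int_0^t\|\u^\ve\|_H^{2p-2}(\u^\ve,G\,dW)$. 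The two Gaussian-correction drift terms are controlled by $C_p\|\u^\ve\|_H^{2p-2}\|G\|_{L_2(H,H)}^2$, and a further Young's inequality turns $\|\u^\ve\|_H^{2p-2}$ into $\|\u^\ve\|_H^{2p}+1$, producing a term of Gr\"onwall type $C_p\int_0^t\|\u^\ve\|_H^{2p}\,ds$. Taking $\sup_{t\le T}$ and then expectations, applying BDG to $M_t$ (its quadratic variation is $4p^2\ve\int_0^t\|\u^\ve\|_H^{4p-2}\|G\|_{L_2(H,H)}^2\,ds\le C_p\int_0^t\|\u^\ve\|_H^{2p-2}\cdot\|\u^\ve\|_H^{2p}\cdot\|G\|_{L_2}^2\,ds$), using Young again to split off a factor $\sup_{s\le T}\|\u^\ve\|_H^{2p}$ that is absorbed into the left side, and finally invoking Gr\"onwall's lemma yields $\mathrm{E}\sup_{t\le T}\|\u^\ve\|_H^{2p}\le C(T,p,\|G\|_{L_2(H,H)},\|\u_0\|_H)$. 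The bound on $\mathrm{E}(\int_0^T\|\u^\ve\|_{H^{5/4}}^2\,dt)^p$ then follows by returning to the $p=1$ identity, raising it to the $p$-th power, and using the already-established moment bound for $\sup\|\u^\ve\|_H$ together with the $L^p$ bound on the martingale (again via BDG).

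The main obstacle, and the only place where care is genuinely needed, is the rigorous justification of the It\^o formula and the martingale property of the stochastic integrals: one must first work with a Galerkin approximation (or a sequence of stopping times $\tau_N=\inf\{t:\|\u^\ve(t)\|_H\ge N\}$), derive all the estimates on the truncated problem where everything is finite and the stopped stochastic integral is a true martingale, and only then pass to the limit using Fatou/monotone convergence — the uniformity in $N$ being exactly what the Gr\"onwall argument above delivers. One also needs the a priori knowledge that the solution lies in $L^2(\Omega;L^2(0,T;H^{5/4}))\cap L^2(\Omega;C([0,T];H))$ to make sense of each term, but this is part of the well-posedness package assumed in this subsection. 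Everything else is routine: the key structural input is the cancellation \eqref{2.1} of the nonlinearity in the $H$-energy balance, which is precisely what makes the $H$-norm estimate closed and independent of $\ve\in(0,1]$.
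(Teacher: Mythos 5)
Your proposal is correct and follows essentially the same route as the paper: It\^o's formula applied to the $H$-norm energy, the cancellation \eqref{2.1} to kill the nonlinearity, the BDG inequality on the stochastic integral with a Young-type absorption of $\mathrm{E}\sup_{t\le T}\|\mathbf{u}^\varepsilon\|_H^{2p}$ into the left-hand side, plus the standard localization to justify the martingale manipulations. The only (inessential) difference is that for general $p$ the paper simply integrates the $p=1$ identity \eqref{a2.14}, raises it to the $p$-th power and applies BDG at that level, whereas you apply It\^o directly to $\|\cdot\|_H^{2p}$ and close with Gr\"onwall; both are routine and yield the same bounds \eqref{2.10} and \eqref{2.12}.
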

\begin{proof}
Applying the It\^{o} formula to $\frac{1}{2}\|\mathbf{u}^\varepsilon\|_H^2$ and from \eqref{2.1}, we obtain
\begin{align}\label{a2.14}
\frac{1}{2}d\|\mathbf{u}^\varepsilon\|_H^2+\|\Lambda^\frac{5}{4}\mathbf{u}^\varepsilon\|_{L^2}^2dt=(\sqrt{\varepsilon}GdW, \mathbf{u}^\varepsilon)+\frac{\varepsilon}{2}\| G\|_{L_2(H,H)}^2 dt.
\end{align}
Using the BDG inequality,
\begin{align*}
&\mathrm{E}\sup_{t\in [0,T]}\left|\int_{0}^{t}(\sqrt{\varepsilon}GdW, \mathbf{u}^\varepsilon)\right|^p\nonumber\\
&\leq \mathrm{E}\left|\int_{0}^{T}\sum_{k\in \mathbb{Z}_0^3}(\sqrt{\varepsilon}Ge_k, \mathbf{u}^\varepsilon)^2dt\right|^\frac{p}{2}\nonumber\\
&\leq \frac{1}{4}\mathrm{E}\sup_{t\in [0,T]}\|\mathbf{u}^\varepsilon\|_H^{2p}+\varepsilon^\frac{p}{2}\mathrm{E}\left(\int_{0}^{T}\| G\|^2_{L_2(H,H)}dt\right)^\frac{p}{2}.
\end{align*}
Integrating of $t$, taking power $p$ and expectation of \eqref{a2.14}, we obtain \eqref{2.10}. When $p=1$, we could easily obtain \eqref{2.12}.
\end{proof}

\begin{lemma}\label{lem2.2} There exists a constant $C$ dependence of $T$ and initial data but independence of $\varepsilon$ such that
\begin{align*}
\mathrm{E}\int_{0}^{T}\|\mathbf{u}^\varepsilon\|_{H^{-\frac{5}{4}}}^2+\|d\mathbf{u}^\varepsilon/dt\|_{H^{-\frac{5}{4}}}^2dt\leq C.
\end{align*}
\end{lemma}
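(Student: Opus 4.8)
The plan is to estimate the two terms on the left-hand side separately, using the abstract form \eqref{e1*} rewritten as
\[
\frac{d\mathbf{u}^\varepsilon}{dt} = -A^{\frac{5}{4}}\mathbf{u}^\varepsilon - B(\mathbf{u}^\varepsilon,\mathbf{u}^\varepsilon) + \sqrt{\varepsilon}\,G\frac{dW}{dt},
\]
and interpreting $d\mathbf{u}^\varepsilon/dt$ in the weak (distributional) sense, so that all identities are understood after pairing with a test function in $H^{5/4}$. The term $\int_0^T\|\mathbf{u}^\varepsilon\|_{H^{-5/4}}^2\,dt$ is the easy one: by the Poincaré inequality $\|\mathbf{u}^\varepsilon\|_{H^{-5/4}}\le\|\mathbf{u}^\varepsilon\|_H$, so
\[
\mathrm{E}\int_0^T\|\mathbf{u}^\varepsilon\|_{H^{-5/4}}^2\,dt \le \mathrm{E}\int_0^T\|\mathbf{u}^\varepsilon\|_H^2\,dt \le T\,\mathrm{E}\sup_{t\in[0,T]}\|\mathbf{u}^\varepsilon\|_H^2,
\]
which is finite by \eqref{2.12} of Lemma \ref{lem2.2*}.

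For the time-derivative term I would bound each of the three pieces in the $H^{-5/4}$ norm. First, $\|A^{5/4}\mathbf{u}^\varepsilon\|_{H^{-5/4}} = \|\mathbf{u}^\varepsilon\|_{H^{5/4}}$ (since $A^{5/4}$ maps $H^{5/4}$ isometrically onto $H^{-5/4}$ up to the equivalence of $\|\Lambda^{5/4}\cdot\|_{L^2}$ with $\|\cdot\|_{H^{5/4}}$), and the time integral of its square is controlled by \eqref{2.12}. Second, for the nonlinear term I would use that $B(\mathbf{u}^\varepsilon,\mathbf{u}^\varepsilon) = P\,\mathrm{div}(\mathbf{u}^\varepsilon\otimes\mathbf{u}^\varepsilon)$, so $\|B(\mathbf{u}^\varepsilon,\mathbf{u}^\varepsilon)\|_{H^{-5/4}} \lesssim \|\mathbf{u}^\varepsilon\otimes\mathbf{u}^\varepsilon\|_{H^{-1/4}} \lesssim \|\mathbf{u}^\varepsilon\otimes\mathbf{u}^\varepsilon\|_{L^{q}}$ for a suitable $q$, and then by Sobolev embedding $H^{5/4}(\mathbb{T}^3)\hookrightarrow L^{6}$ and Hölder, $\|\mathbf{u}^\varepsilon\otimes\mathbf{u}^\varepsilon\|_{L^3}\le\|\mathbf{u}^\varepsilon\|_{L^6}^2\lesssim\|\mathbf{u}^\varepsilon\|_{H^{5/4}}^2$; since $L^3(\mathbb{T}^3)\hookrightarrow H^{-1/4}$ in three dimensions, this yields $\|B(\mathbf{u}^\varepsilon,\mathbf{u}^\varepsilon)\|_{H^{-5/4}}\lesssim\|\mathbf{u}^\varepsilon\|_{H^{5/4}}^2$, whose time integral is again controlled by \eqref{2.12} (note only one power of the integral of $\|\mathbf{u}^\varepsilon\|_{H^{5/4}}^2$ is needed, not its square). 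Third, for the stochastic forcing $\sqrt{\varepsilon}\,G\,dW/dt$, which is not a function but a distribution in time, I would not estimate $d\mathbf{u}^\varepsilon/dt$ pathwise but rather split $\mathbf{u}^\varepsilon$ into its deterministic drift part (to which the above applies) and the stochastic convolution / It\^o term; the expected $L^2(0,T;H^{-5/4})$-norm of the latter is bounded, via It\^o's isometry, by $\varepsilon\,T\,\|G\|_{L_2(H,H)}^2$, using $\|G\|_{L_2(H,H^{-5/4})}\le\|G\|_{L_2(H,H)}$.

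Collecting the three bounds and taking expectation gives
\[
\mathrm{E}\int_0^T\|d\mathbf{u}^\varepsilon/dt\|_{H^{-5/4}}^2\,dt \lesssim \mathrm{E}\int_0^T\|\mathbf{u}^\varepsilon\|_{H^{5/4}}^2\,dt + \mathrm{E}\int_0^T\|\mathbf{u}^\varepsilon\|_{H^{5/4}}^4\,dt + \varepsilon T\|G\|_{L_2(H,H)}^2,
\]
and all terms on the right are finite and $\varepsilon$-independent by Lemma \ref{lem2.2*} (the quartic term needs the $p=2$ case of \eqref{2.10}). I expect the main obstacle to be the honest treatment of $d\mathbf{u}^\varepsilon/dt$ in the presence of white noise: one must either interpret the statement for the finite-variation part of the equation only, or phrase everything through the mild formulation and the It\^o isometry, being careful that the claimed bound in $L^2(0,T;H^{-5/4})$ is exactly at the level of regularity where the stochastic integral term is still square-integrable in time (it would fail in $H^{s}$ for $s>-5/4$ in general without extra structure on $G$). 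The nonlinear estimate is the other delicate point, but the hyperdissipative exponent $5/4$ is precisely chosen so that $H^{5/4}\hookrightarrow L^6$ closes the estimate with room to spare, so this part is routine once the right Sobolev exponents are tracked.
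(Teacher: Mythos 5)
The weak point is your treatment of the nonlinear term. The bound $\|B(\mathbf{u}^\varepsilon,\mathbf{u}^\varepsilon)\|_{H^{-\frac{5}{4}}}\lesssim\|\mathbf{u}^\varepsilon\|_{H^{\frac{5}{4}}}^2$ forces you to control $\mathrm{E}\int_0^T\|\mathbf{u}^\varepsilon\|_{H^{\frac{5}{4}}}^4\,dt$, and this is \emph{not} supplied by the $p=2$ case of \eqref{2.10}: that estimate bounds $\mathrm{E}\bigl(\int_0^T\|\mathbf{u}^\varepsilon\|_{H^{\frac{5}{4}}}^2\,dt\bigr)^2$, and $\int_0^T f^2\,dt$ is not dominated by $\bigl(\int_0^T f\,dt\bigr)^2$ (a tall thin spike in time is a counterexample). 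The weak solution is only known to lie in $L^2(0,T;H^{\frac{5}{4}})$, not $L^4(0,T;H^{\frac{5}{4}})$, so the quartic time integral is not available from the a priori estimates and your final display does not close. The repair is the asymmetric bilinear estimate that the paper's duality argument uses: $|(B(\mathbf{u},\mathbf{u}),\theta)|=|b(\mathbf{u},\theta,\mathbf{u})|\le\|\mathbf{u}\|_{L^2}\|\nabla\theta\|_{L^{\frac{12}{5}}}\|\mathbf{u}\|_{L^{12}}\lesssim\|\mathbf{u}\|_{H}\|\mathbf{u}\|_{H^{\frac{5}{4}}}\|\theta\|_{H^{\frac{5}{4}}}$, using $H^{\frac{5}{4}}\hookrightarrow L^{12}$ and $H^{\frac{5}{4}}\hookrightarrow H^{1,\frac{12}{5}}$, i.e. $\|B(\mathbf{u},\mathbf{u})\|_{H^{-\frac{5}{4}}}\lesssim\|\mathbf{u}\|_{H}\|\mathbf{u}\|_{H^{\frac{5}{4}}}$; in your divergence-form computation this amounts to putting one factor of $\mathbf{u}\otimes\mathbf{u}$ in $L^2$ and one in $L^{12}$, landing in $L^{\frac{12}{7}}\hookrightarrow H^{-\frac{1}{4}}$, instead of both in $L^6$. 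Then $\mathrm{E}\int_0^T\|B(\mathbf{u}^\varepsilon,\mathbf{u}^\varepsilon)\|_{H^{-\frac{5}{4}}}^2\,dt\le\mathrm{E}\bigl[\sup_{t\in[0,T]}\|\mathbf{u}^\varepsilon\|_{H}^2\int_0^T\|\mathbf{u}^\varepsilon\|_{H^{\frac{5}{4}}}^2\,dt\bigr]$, which is finite and $\varepsilon$-uniform by Cauchy--Schwarz in $\omega$ together with \eqref{2.10} for $p=2$.

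The remaining parts of your argument are fine and essentially parallel the paper, which proves the lemma by testing the weak formulation against $\theta\in H^{\frac{5}{4}}$ (the same duality bounds): the first term follows from \eqref{2.12} and Poincar\'{e}, the linear term $A^{\frac{5}{4}}\mathbf{u}^\varepsilon$ from \eqref{2.12}, and the white-noise term must indeed be handled either by reading $d\mathbf{u}^\varepsilon/dt$ as the finite-variation part of the equation or by estimating the martingale contribution separately; the paper is equally informal on this point (``by the same argument''), so your explicit caveat there is, if anything, more careful than the original.
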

\begin{proof} Choosing $\theta\in H^\frac{5}{4}$, it holds
\begin{align}\label{2.12*}
 (\mathbf{u},\theta)=&(\mathbf{u}_0,\theta)-\int_0^t(A^\frac{5}{4} \mathbf{u}, \theta)dr-\int_{0}^{t}(B(\mathbf{u},\mathbf{u}), \theta)dr
 +\sqrt{\varepsilon}\int_0^t(GdW,\theta).
 \end{align}
 Using the H\"{o}lder inequality, we have
 \begin{align}
 &\mathrm{E}\int_{0}^{T}\int_0^t-(A^\frac{5}{4} \mathbf{u}, \theta)-(B(\mathbf{u},\mathbf{u}), \theta)drdt\nonumber\\
 &\leq \mathrm{E}\int_{0}^{T}\int_0^t(\|\mathbf{u}\|_{H^\frac{5}{4}}+\| \mathbf{u}\|_{H}\|\mathbf{u}\|_{H^\frac{5}{4}})\|\theta\|_{H^\frac{5}{4}}drdt\nonumber\\
 &\leq C(T)\|\theta\|_{H^\frac{5}{4}}\mathrm{E}\int_{0}^{T}1+\|\mathbf{u}\|_{H}^2+\| \mathbf{u}\|_{H^\frac{5}{4}}^2dt.
 \end{align}
By the martingale property, we see
\begin{align}\label{2.15*}
&\mathrm{E}\int_{0}^{T}\left|\int_0^t(GdW,\theta)dr\right|dt\nonumber\\
&\leq T\mathrm{E}\left(\int_{0}^{T}\|G\|^2_{L_2(H;H)}\|\theta\|_{L^2}^2dt\right)^\frac{1}{2}\nonumber\\
&\leq C(T)\|\theta\|_{L^2}\|G\|_{L_2(H;H)}.
\end{align}
Combining \eqref{2.12*}-\eqref{2.15*}, we obtain
$$\mathrm{E}\int_{0}^{T}\|\mathbf{u}^\varepsilon\|_{H^{-\frac{5}{4}}}^2dt\leq C.$$
By same argument, we also have
$$\mathrm{E}\int_{0}^{T}\|d\mathbf{u}^\varepsilon/dt\|_{H^{-\frac{5}{4}}}^2dt\leq C.$$
This completes the proof.
\end{proof}
\begin{proposition} For a stochastic basis $(\Omega, \mathcal{F}, \mathrm{P}, \{\mathcal{F}\}_{t\geq 0}, W)$, suppose that the initial data $\mathbf{u}_0\in H$ and $G\in L_2(H;H)$, then equations \eqref{e1*} admit a weak solution $\mathbf{u}$ which is $H$-valued progressively measurable process with regularity
$$\mathbf{u}\in L^p(\Omega; C([0,T];H)\cap L^2(0,T; H^\frac{5}{4})),$$
for any $p\geq 1$, and for any $\theta\in H^\frac{5}{4}$, it holds $\mathrm{P}$ a.s.
\begin{align}\label{2.9}
(\mathbf{u},\theta)=&(\mathbf{u}_0,\theta)-\int_0^t(A^\frac{5}{4} \mathbf{u}, \theta)dr-\int_{0}^{t}(B(\mathbf{u},\mathbf{u}), \theta)dr
 +\sqrt{\varepsilon}\int_0^t(GdW,\theta).
\end{align}
Furthermore, the solution is unique in the following sense: if $\mathbf{u}_1$ and $\mathbf{u}_2$ satisfy \eqref{2.9} with $\mathbf{u}_1(0)=\mathbf{u}_2(0)$, then
$$\mathrm{P}\{\mathbf{u}_1(t)=\mathbf{u}_2(t), ~for~ all~t\geq 0 \}=1.$$
\end{proposition}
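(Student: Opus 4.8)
The plan is to obtain a martingale solution by a Galerkin scheme combined with a stochastic compactness argument, then to establish pathwise uniqueness, and finally to invoke the Yamada--Watanabe--Gy\"{o}ngy--Krylov principle to promote the martingale solution to a probabilistically strong one on the given stochastic basis. First I would introduce the Galerkin system: with $P_n$ the orthogonal projection of $H$ onto $\mathrm{span}\{e_1,\dots,e_n\}$, solve
\[
d\mathbf{u}^n+A^{\frac54}\mathbf{u}^n\,dt+P_nB(\mathbf{u}^n,\mathbf{u}^n)\,dt=\sqrt{\varepsilon}\,P_nG\,dW,\qquad \mathbf{u}^n(0)=P_n\mathbf{u}_0.
\]
The drift is locally Lipschitz on a finite-dimensional space, so there is a unique local solution, and it is global because the energy identity \eqref{a2.14}, unaffected by the projection thanks to $b(\mathbf{u}^n,\mathbf{u}^n,\mathbf{u}^n)=0$, together with the estimates of Lemma~\ref{lem2.2*} (which apply verbatim to the approximations) bounds $\mathrm{E}\sup_{[0,T]}\|\mathbf{u}^n\|_H^{2p}+\mathrm{E}\big(\int_0^T\|\mathbf{u}^n\|_{H^{5/4}}^2\,dt\big)^p$ uniformly in $n$; Lemma~\ref{lem2.2} then bounds $d\mathbf{u}^n/dt$ uniformly in $L^2(\Omega\times(0,T);H^{-\frac54})$.

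From these bounds and the compact chain $H^{\frac54}\hookrightarrow\hookrightarrow H\hookrightarrow H^{-\frac54}$, an Aubin--Lions--Simon type criterion gives tightness of the laws of $\mathbf{u}^n$ on $L^2(0,T;H)\cap C([0,T];H_w)\cap C([0,T];H^{-\frac54})$, hence of the laws of $(\mathbf{u}^n,W)$ on the product of these spaces with $C([0,T];H)$; since $C([0,T];H_w)$ is quasi-Polish, Prokhorov's theorem and the Jakubowski--Skorokhod representation theorem provide, on an auxiliary probability space, copies $(\tilde{\mathbf{u}}^n,\tilde W^n)$ with the same laws converging a.s.\ to a limit $(\tilde{\mathbf{u}},\tilde W)$ in these topologies, with $\tilde{\mathbf{u}}$ inheriting the uniform bounds and $\tilde W$ a cylindrical Wiener process for the filtration generated by $(\tilde{\mathbf{u}},\tilde W)$. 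I would then pass to the limit in the weak formulation \eqref{2.9}: the linear terms converge by the weak $L^2(0,T;H^{5/4})$ convergence, the additive stochastic integral $\sqrt{\varepsilon}\int_0^\cdot(P_nG\,d\tilde W^n,\theta)$ by the standard argument for integrals of deterministic integrands against a.s.-converging martingales, and the nonlinear term by writing $B(\tilde{\mathbf{u}}^n,\tilde{\mathbf{u}}^n)-B(\tilde{\mathbf{u}},\tilde{\mathbf{u}})=B(\tilde{\mathbf{u}}^n-\tilde{\mathbf{u}},\tilde{\mathbf{u}}^n)+B(\tilde{\mathbf{u}},\tilde{\mathbf{u}}^n-\tilde{\mathbf{u}})$ and combining the strong convergence $\tilde{\mathbf{u}}^n\to\tilde{\mathbf{u}}$ in $L^2(0,T;H)$, the uniform $L^2(0,T;H^{5/4})$ bound, H\"{o}lder in time, and the trilinear estimate
\[
|b(\mathbf{u},\mathbf{v},\theta)|=|b(\mathbf{u},\theta,\mathbf{v})|\le C\|\mathbf{u}\|_{H^{5/8}}\|\mathbf{v}\|_{H^{5/8}}\|\theta\|_{H^{5/4}}\le C\|\mathbf{u}\|_H^{1/2}\|\mathbf{u}\|_{H^{5/4}}^{1/2}\|\mathbf{v}\|_H^{1/2}\|\mathbf{v}\|_{H^{5/4}}^{1/2}\|\theta\|_{H^{5/4}}.
\]
This identifies $(\tilde{\mathbf{u}},\tilde W)$ as a martingale solution; upgrading weak to strong continuity in $H$ via the It\^{o} formula for $\|\tilde{\mathbf{u}}\|_H^2$ yields the stated path regularity $L^p(\tilde\Omega;C([0,T];H)\cap L^2(0,T;H^{5/4}))$.

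For pathwise uniqueness I would take two solutions $\mathbf{u}_1,\mathbf{u}_2$ on the same basis with $\mathbf{u}_1(0)=\mathbf{u}_2(0)$. Because the noise is additive, $\mathbf{w}:=\mathbf{u}_1-\mathbf{u}_2$ solves the random equation $\partial_t\mathbf{w}+A^{\frac54}\mathbf{w}+B(\mathbf{w},\mathbf{u}_1)+B(\mathbf{u}_2,\mathbf{w})=0$, $\mathbf{w}(0)=0$, with no stochastic term, so for a.e.\ $\omega$ the deterministic energy method applies (the pairing $\langle\partial_t\mathbf{w},\mathbf{w}\rangle=\tfrac12\tfrac{d}{dt}\|\mathbf{w}\|_H^2$ being legitimate since $\mathbf{w}\in L^2(0,T;H^{5/4})$ and $\partial_t\mathbf{w}\in L^2(0,T;H^{-5/4})$): using $b(\mathbf{u}_2,\mathbf{w},\mathbf{w})=0$ and
\[
|b(\mathbf{w},\mathbf{u}_1,\mathbf{w})|=|b(\mathbf{w},\mathbf{w},\mathbf{u}_1)|\le C\|\mathbf{w}\|_H\|\mathbf{w}\|_{H^{5/4}}\|\mathbf{u}_1\|_{H^{5/4}}\le\tfrac12\|\mathbf{w}\|_{H^{5/4}}^2+C\|\mathbf{u}_1\|_{H^{5/4}}^2\|\mathbf{w}\|_H^2,
\]
one obtains $\tfrac{d}{dt}\|\mathbf{w}\|_H^2\le C\|\mathbf{u}_1\|_{H^{5/4}}^2\|\mathbf{w}\|_H^2$, and Gr\"{o}nwall forces $\mathbf{w}\equiv0$ since $\int_0^T\|\mathbf{u}_1\|_{H^{5/4}}^2\,dt<\infty$ a.s. Existence of a martingale solution together with pathwise uniqueness then yields, by the Gy\"{o}ngy--Krylov characterization, a unique probabilistically strong solution on $(\Omega,\mathcal F,\mathrm P,\{\mathcal F_t\}_{t\ge0},W)$, which is the asserted solution.

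The main obstacle is the control of the quadratic nonlinearity in three dimensions, and this is exactly where the critical dissipation exponent $\alpha=\frac54$ enters: the interpolation $H^{5/8}=[H,H^{5/4}]_{1/2}\hookrightarrow L^{24/7}$ and the dual bound $\|\nabla\theta\|_{L^{12/5}}\le C\|\theta\|_{H^{5/4}}$ make the trilinear form bounded by quantities that close against the a priori bound $L^2(0,T;H^{5/4})$ delivered by Lemma~\ref{lem2.2*}; this same bound is precisely what is needed both to pass to the limit in $B(\mathbf{u}^n,\mathbf{u}^n)$ and to run the uniqueness estimate, and for $\alpha<\frac54$ the scheme degenerates. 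A fully pathwise alternative that avoids the Skorokhod machinery is also available: subtract the Ornstein--Uhlenbeck process $\mathbf{z}^\varepsilon(t)=\sqrt{\varepsilon}\int_0^t e^{-(t-s)A^{5/4}}G\,dW(s)$, which belongs to $C([0,T];H^{\delta})\cap L^2(0,T;H^{5/4})$ for some $\delta>0$ almost surely, and solve the resulting random critical hyperdissipative Navier--Stokes system $\omega$-by-$\omega$ by deterministic energy methods, progressive measurability being inherited from $\mathbf{z}^\varepsilon$.
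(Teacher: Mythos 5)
Your proposal is correct and follows essentially the same route as the paper: Galerkin approximation plus the a priori bounds of Lemmas \ref{lem2.2*}--\ref{lem2.2}, stochastic compactness and passage to the limit for existence (the paper leaves these steps to the cited references, while you spell out the Jakubowski--Skorokhod and Gy\"{o}ngy--Krylov details), and for uniqueness the identical trilinear bound via $H^{5/4}\hookrightarrow L^{12}$ and $\|\Lambda\mathbf{w}\|_{L^{12/5}}\lesssim\|\mathbf{w}\|_{H^{5/4}}$. The only cosmetic difference is that you close the uniqueness estimate by a pathwise Gr\"{o}nwall argument on the noise-free difference equation, whereas the paper applies the It\^{o} product formula with the weight $\exp\bigl(-\int_0^t\|\mathbf{u}_1\|_{H^{5/4}}^2\,dr\bigr)$; both rest on the same inequality and are equally valid.
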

We remark that the solution is weak in the sense of PDEs, while it is strong in the sense of probability, thus the solution is established on a fixed probability space.

\begin{proof} \underline{{\rm Existence}} The existence of proof follows from three steps: constructing Galerkin approximation solutions and the a priori estimates, stochastic compactness argument and passing the limit. Here, the a priori estimates were given above, the remaining steps is standard, we do not give the details, see \cite{Debussche, bre} for 2D case.

\underline{{\rm Uniqueness}} Denote by $\mathbf{u}$ the difference of two solutions $\mathbf{u}_1$ and $\mathbf{u}_2$, which satisfies
\begin{align*}
d\mathbf{u}+A^\frac{5}{4} \mathbf{u}dt+(B(\mathbf{u}_1,\mathbf{u}_1)-B(\mathbf{u}_2,\mathbf{u}_2))dt=0,
\end{align*}
with initial data $\mathbf{u}_0=0$.
Taking inner product with $\mathbf{u}$, we see
\begin{align}\label{2.11}
d\|\mathbf{u}\|_{H}^2+2\|\Lambda^\frac{5}{4} \mathbf{u}\|_{L^2}^2dt=-2(B(\mathbf{u}_1,\mathbf{u}_1)-B(\mathbf{u}_2,\mathbf{u}_2), \mathbf{u})dt.
\end{align}
Using \eqref{2.1}, the H\"{o}lder inequality and embedding $H^\frac{5}{4}\rightarrow L^{12}$, $H^\frac{5}{4}\rightarrow H^{1, \frac{12}{5}}$, we have
\begin{align}\label{2.120}
&(B(\mathbf{u}_1,\mathbf{u}_1)-B(\mathbf{u}_2,\mathbf{u}_2), \mathbf{u})=(B(\mathbf{u}, \mathbf{u}_1), \mathbf{u})\nonumber\\&\leq \|\Lambda\mathbf{u}\|_{L^\frac{12}{5}}\|\mathbf{u}_1\|_{L^{12}}\|\mathbf{u}\|_{H}\nonumber\\
&\leq \frac{1}{2}\|\Lambda^\frac{5}{4}\mathbf{u}\|_{L^2}^2
+\frac{1}{2}\|\mathbf{u}\|_{H}^2\|\mathbf{u}_1\|_{H^\frac{5}{4}}^2.
\end{align}
We have
\begin{align}\label{2.13}
d\|\mathbf{u}\|_{H}^2+\|\Lambda^\frac{5}{4} \mathbf{u}\|_{L^2}^2dt\leq\|\mathbf{u}\|_{H}^2\|\mathbf{u}_1\|_{H^\frac{5}{4}}^2dt.
\end{align}
Denote by
$$\rho(t)=\|\mathbf{u}_1(t)\|_{H^\frac{5}{4}}^2,$$
we could apply the It\^o product formula to function
$${\rm exp}\left(-\int_{0}^{t}\rho(r)dr\right)\|\mathbf{u}\|_{H}^2,$$
obtaining from \eqref{2.11}-\eqref{2.13}
\begin{align*}
&d{\rm exp}\left(-\int_{0}^{t}\rho(r)dr\right)\|\mathbf{u}\|_{H}^2\nonumber\\
&=-\rho(t){\rm exp}\left(-\int_{0}^{t}\rho(r)dr\right)\|\mathbf{u}\|_{H}^2+{\rm exp}\left(-\int_{0}^{t}\rho(r)dr\right)d\|\mathbf{u}\|_{H}^2\nonumber\\
&\leq 0.
\end{align*}
Then, integrating of $t$ and taking expectation yield
\begin{align*}
\mathrm{E}\left[{\rm exp}\left(-\int_{0}^{t}\rho(r)dr\right)\|\mathbf{u}\|_{H}^2\right]=0.
\end{align*}
Using the regularity $\mathbf{u}^1\in L^p(\Omega; L^2(0,T; H^\frac{5}{4}))$ for all $p\geq 2$, we know ${\rm exp}\left(-\int_{0}^{t}\rho(r)dr\right)>0$, $\mathrm{P}$ a.s. leading to
$$\mathrm{E}\|\mathbf{u}(t)\|_{H}^2=0.$$
We obtain the uniqueness.
\end{proof}

\subsection{Well-posedness for skeleton equations} In this subsection, we formulate the well-posedness for the skeleton equations
\begin{equation}\label{e1}
d\mathbf{u}+A^\frac{5}{4} \mathbf{u}dt+B(\mathbf{u},\mathbf{u})dt=G\varphi dt, ~\mathbf{u}(0)=\mathbf{u}_0,
\end{equation}
where $\varphi\in \mathcal{A}$ as the set of $H$-valued predictable stochastic process $\varphi$ such that $\int_{0}^{T}\|\varphi\|_{H}^{2}dt<\infty$, $\mathrm{P}$ a.s. For any fixed $M>0$, we define the set
\begin{eqnarray*}
S_{M}=\left\{\varphi\in L^{2}(0,T;H):\int_{0}^{T}\|\varphi\|_{H}^{2}dt\leq M\right\}.
\end{eqnarray*}
The set $S_{M}$ endows with the weak topology
$$
d(h,g)=\sum_{k\geq 1}\frac{1}{2^{k}}\left|\int_{0}^{T}\langle h(t)-g(t), \xi_{k}\rangle_{H}dt\right|,
$$
for $g, h\in S_M$, which is a Polish space and $\{\xi_{k}\}_{k\geq 1}$ is an orthonormal basis of $L^{2}(0,T;H)$. For $M>0$, define $\mathcal{A}_{M}=\{h\in \mathcal{A}:\varphi(\omega)\in S_{M}, {\rm a.s.}\}$.

Note that, here the skeleton equations is a deterministic Navier-Stokes equations with control term, the proof of well-posedness is easier compared with the stochastic version, therefore in the following we only establish several estimates used later.

\begin{lemma}\label{lem3.1} Suppose that $\mathbf{u}$ is the solution of skeleton equations \eqref{e1}, for any $\mathbf{u}_0\in H$, $\varphi\in \mathcal{A}_{M}$ and $G\in L_2(H; H)$, for any $T>0$ then
\begin{align}\label{3.2}
\sup_{t\in[0,T]}\|\mathbf{u}\|_{H}^2+\int_{0}^{T}\|\Lambda^\frac{5}{4} \mathbf{u}\|_{L^2}^2dt\leq \|\mathbf{u}_0\|_{H}^2+\int_{0}^{T}\|G\varphi\|_{H}^2dt,
\end{align}
and
\begin{align}\label{3.3}
\|\mathbf{u}(T)\|_{H}^2\leq C\left(\|\mathbf{u}_0\|_{H}^2+\int_{0}^{T}\frac{1}{\lambda_1}\|G\varphi\|_{H}^2dt\right){\rm exp}\left(-\frac{CT}{2}\right),
\end{align}
where $C$ is a constant and $\lambda_1$ is the first eigenvalue of $A$, actually in our case, $\lambda_1=1$.
We see from \eqref{3.3} that $\|\mathbf{u}(T)\|_{H}\rightarrow 0$ with the exponential decay speed when $T\rightarrow \infty$ complying with what is expected in the physical sense.

Furthermore, there exists constant $C$ such that the solution $\mathbf{u}$ has time regularity
\begin{align}\label{3.4*}\int_{0}^{T}\|\mathbf{u}\|^2_{H^{-\frac{5}{4}}}+\left\|d\mathbf{u}/dt\right\|^2_{H^{-\frac{5}{4}}}dt\leq C.\end{align}
\end{lemma}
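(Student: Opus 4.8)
The plan is to obtain \eqref{3.2}--\eqref{3.4*} as \emph{a priori} estimates for a solution $\mathbf u$ of the skeleton equation \eqref{e1}; the existence and uniqueness underlying the statement follow from the same Galerkin approximation and compactness scheme used for the stochastic equation, but are simpler in the deterministic setting and so are omitted, as the text already notes.

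For \eqref{3.2} I would test \eqref{e1} against $\mathbf u$ in $H$; the trilinear term vanishes by \eqref{2.1}, leaving $\tfrac12\tfrac{d}{dt}\|\mathbf u\|_H^2+\|\Lambda^{\frac54}\mathbf u\|_{L^2}^2=(G\varphi,\mathbf u)$. Estimating $(G\varphi,\mathbf u)\le\|G\varphi\|_{H^{-\frac54}}\|\mathbf u\|_{H^{\frac54}}\le\tfrac12\|G\varphi\|_H^2+\tfrac12\|\Lambda^{\frac54}\mathbf u\|_{L^2}^2$, where $\|G\varphi\|_{H^{-\frac54}}\le\|G\varphi\|_H$ by Poincar\'e (here $\lambda_1=1$), absorbing the dissipative term and integrating in $t$ gives \eqref{3.2}. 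For the exponential decay \eqref{3.3}, I would start from the differential inequality $\tfrac{d}{dt}\|\mathbf u\|_H^2+\|\Lambda^{\frac54}\mathbf u\|_{L^2}^2\le\|G\varphi\|_H^2$ obtained along the way, bound the dissipation below using Poincar\'e, $\|\Lambda^{\frac54}\mathbf u\|_{L^2}^2\ge C\|\mathbf u\|_H^2$, to get $\tfrac{d}{dt}\|\mathbf u\|_H^2+C\|\mathbf u\|_H^2\le\|G\varphi\|_H^2$, and then apply Gronwall's lemma on $[0,T]$; since $\int_0^T\|\varphi\|_H^2\,dt\le M$ bounds the forcing uniformly, this yields \eqref{3.3}, and the forcing contribution also tends to $0$ as $T\to\infty$, so $\|\mathbf u(T)\|_H\to0$.

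For the time regularity \eqref{3.4*}, I would read off $d\mathbf u/dt=-A^{\frac54}\mathbf u-B(\mathbf u,\mathbf u)+G\varphi$ and bound each term in $H^{-\frac54}$. One has $\|A^{\frac54}\mathbf u\|_{H^{-\frac54}}=\|\mathbf u\|_{H^{\frac54}}$ and $\|G\varphi\|_{H^{-\frac54}}\le\|G\varphi\|_H\le C\|\varphi\|_H$, both in $L^2(0,T)$ by \eqref{3.2} and $\varphi\in\mathcal A_M$ respectively; likewise $\|\mathbf u\|_{H^{-\frac54}}\le\|\mathbf u\|_{H^{\frac54}}\in L^2(0,T)$. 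The main term is the nonlinearity: using $(B(\mathbf u,\mathbf u),\theta)=-b(\mathbf u,\theta,\mathbf u)$, H\"older with exponents $(2,\tfrac{12}{5},12)$, and the embeddings $H^{\frac54}\hookrightarrow L^{12}$ and $H^{\frac54}\hookrightarrow W^{1,\frac{12}{5}}$ already invoked for uniqueness in \eqref{2.120}, I get $|(B(\mathbf u,\mathbf u),\theta)|\le C\|\mathbf u\|_H\|\mathbf u\|_{H^{\frac54}}\|\theta\|_{H^{\frac54}}$, hence $\|B(\mathbf u,\mathbf u)\|_{H^{-\frac54}}\le C\|\mathbf u\|_H\|\mathbf u\|_{H^{\frac54}}$, which is the product of an $L^\infty(0,T)$ and an $L^2(0,T)$ function by \eqref{3.2}. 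Summing the three contributions gives \eqref{3.4*}.

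The energy identity and the Gronwall argument are entirely routine; the one place demanding care is the $H^{-\frac54}$ estimate of $B(\mathbf u,\mathbf u)$, where the \emph{critical} three-dimensional Sobolev embedding $H^{\frac54}\hookrightarrow L^{12}$ is precisely what makes the hyperdissipative exponent $\alpha=\tfrac54$ strong enough to close the estimate — the same mechanism that already drives the uniqueness proof, so no genuinely new obstacle arises here.
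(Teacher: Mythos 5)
Your proposal is correct and follows essentially the same route as the paper: the paper only writes out the decay estimate \eqref{3.3} (energy identity with the cancellation \eqref{2.1}, Poincar\'e, Gronwall) and refers for \eqref{3.2} and \eqref{3.4*} to the arguments of \eqref{2.10} and Lemma \ref{lem2.2}, which are exactly the estimates you spell out, including $\|B(\mathbf{u},\mathbf{u})\|_{H^{-\frac{5}{4}}}\leq C\|\mathbf{u}\|_{H}\|\mathbf{u}\|_{H^{\frac{5}{4}}}$ via $H^{\frac{5}{4}}\hookrightarrow L^{12}$ and $H^{\frac{5}{4}}\hookrightarrow W^{1,\frac{12}{5}}$. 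The only caveat (shared with the paper's own proof) is that Gronwall literally yields the convolution form $\|\mathbf{u}(T)\|_{H}^2\leq e^{-CT}\|\mathbf{u}_0\|_{H}^2+\int_{0}^{T}e^{-C(T-s)}\|G\varphi(s)\|_{H}^2\,ds$ rather than the product form displayed in \eqref{3.3}, so your remark that the forcing contribution vanishes as $T\to\infty$ is the honest statement of what the argument gives.
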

\begin{proof} We only focus on the decay estimate \eqref{3.3}, while bounds \eqref{3.2} and \eqref{3.4*} could be obtained by same argument as \eqref{2.10} and Lemma \ref{lem2.2}. Taking inner with $\mathbf{u}$ in equations \eqref{e1}, we have
\begin{align*}
\frac{1}{2}d\|\mathbf{u}\|_H^2+\|\Lambda^\frac{5}{4}\mathbf{u}\|_{L^2}^2dt=(G\varphi, \mathbf{u})dt.
\end{align*}
Using the H\"{o}lder inequality and the Poincar\'{e} inequality, we have
\begin{align*}
(G\varphi, \mathbf{u})\leq \frac{1}{2}\|\mathbf{u}\|_{V}^2+\frac{C}{\lambda_1}\|G\varphi\|_{H}^2,
\end{align*}
as well as $\|\mathbf{u}\|_{L^2}^2\leq C\|\Lambda^\frac{5}{4}\mathbf{u}\|_{L^2}^2$ imply
\begin{align*}
\frac{1}{2}d\|\mathbf{u}\|_H^2&\leq -\frac{1}{2}\|\Lambda^\frac{5}{4}\mathbf{u}\|_{L^2}^2dt+\frac{C}{\lambda_1}\|G\varphi\|_{H}^2dt\nonumber\\
&\leq -\frac{1}{2C}\|\mathbf{u}\|_H^2dt+\frac{C}{\lambda_1}\|G\varphi\|_{H}^2dt.
\end{align*}
We infer from the Gronwall lemma
\begin{align*}
\|\mathbf{u}(T)\|_H^2&\leq {\rm exp}\left(-\int_{0}^{T}\frac{1}{2C}dt\right)\left(\|\mathbf{u}_0\|_{H}^2+\int_{0}^{T}\frac{C}{\lambda_1}\|G\varphi\|_{H}^2dt\right)\nonumber\\
&\leq C{\rm exp}\left(-\frac{T}{2C}\right)\left(\|\mathbf{u}_0\|_{H}^2+\int_{0}^{T}\frac{1}{\lambda_1}\|G\varphi\|_{H}^2dt\right).
\end{align*}
We finish the proof.
\end{proof}

\begin{proposition}Suppose that the initial data $\mathbf{u}_0\in H$ and $G\in L_2(H;H)$, then equations \eqref{e1} admit a unique weak solution $\mathbf{u}$  with regularity
$$\mathbf{u}\in C([0,T];H)\cap L^2(0,T; H^\frac{5}{4}),$$
 and for any $\theta\in H^\frac{5}{4}$, it holds
\begin{align*}
(\mathbf{u},\theta)=&(\mathbf{u}_0,\theta)-\int_0^t(A^\frac{5}{4} \mathbf{u}, \theta)dr-\int_{0}^{t}(B(\mathbf{u},\mathbf{u}), \theta)dr+\int_{0}^{t}(G\varphi, \theta)dr.
\end{align*}
Moreover, we have the solution is continuous of initial data, thus for $\mathbf{u}_{0,l}\rightarrow \mathbf{u}_{0}$ in $H$,
$$\mathbf{u}_l\rightarrow \mathbf{u} ~{\rm in}~ C([0,T];H).$$
\end{proposition}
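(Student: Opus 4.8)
The plan is to follow the classical Galerkin scheme, using the a priori bounds already recorded in Lemma \ref{lem3.1} as the backbone. First I would project equation \eqref{e1} onto the span of the first $n$ eigenfunctions $\{e_1,\dots,e_n\}$ of $A$ via the orthogonal projection $P_n$, obtaining a finite-dimensional system of ODEs for $\mathbf{u}^n=P_n\mathbf{u}^n$; local existence follows from the Cauchy–Lipschitz theorem since $B(\cdot,\cdot)$ is locally Lipschitz on the finite-dimensional space, and global existence on $[0,T]$ follows from the energy estimate \eqref{3.2}, which is uniform in $n$ (the nonlinear term drops out by \eqref{2.1}). This gives $\mathbf{u}^n$ bounded in $L^\infty(0,T;H)\cap L^2(0,T;H^\frac{5}{4})$, and the argument of Lemma \ref{lem2.2} applied pathwise (no stochastic integral here) gives $d\mathbf{u}^n/dt$ bounded in $L^2(0,T;H^{-\frac54})$. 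Hence by the Banach–Alaoglu theorem and the Aubin–Lions compactness lemma — using $H^\frac{5}{4}\hookrightarrow H \hookrightarrow H^{-\frac54}$ with the first embedding compact — a subsequence of $\mathbf{u}^n$ converges weakly-$*$ in $L^\infty(0,T;H)$, weakly in $L^2(0,T;H^\frac{5}{4})$, and strongly in $L^2(0,T;H)$ to some limit $\mathbf{u}$.

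Next I would pass to the limit in the weak formulation tested against a fixed $\t\in H^\frac{5}{4}$ (it suffices to take $\t=e_k$ and use density). The linear terms pass by weak convergence; the term $\int_0^t(G\va,\t)\,dr$ is unchanged. The only delicate term is the nonlinearity $\int_0^t(B(\mathbf{u}^n,\mathbf{u}^n),\t)\,dr=-\int_0^t b(\mathbf{u}^n,\t,\mathbf{u}^n)\,dr$: here I would use the strong $L^2(0,T;H)$ convergence of $\mathbf{u}^n$ together with the weak $L^2(0,T;H^\frac{5}{4})$ convergence, exactly as in the bound \eqref{2.120} of the uniqueness proof — writing the difference and estimating $|b(\mathbf{u}^n,\t,\mathbf{u}^n)-b(\mathbf{u},\t,\mathbf{u})|$ via Hölder and the embeddings $H^\frac{5}{4}\hookrightarrow L^{12}$, $H^\frac{5}{4}\hookrightarrow H^{1,\frac{12}{5}}$ — so that the nonlinear term converges to $\int_0^t(B(\mathbf{u},\mathbf{u}),\t)\,dr$. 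Continuity in time, i.e. $\mathbf{u}\in C([0,T];H)$ (after modification on a null set), follows from $\mathbf{u}\in L^2(0,T;H^\frac54)$ and $d\mathbf{u}/dt\in L^2(0,T;H^{-\frac54})$ by the standard Lions–Magenes interpolation lemma.

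For uniqueness and continuous dependence I would argue directly on the difference. Let $\mathbf{u}_l$ solve \eqref{e1} with data $\mathbf{u}_{0,l}$ and $\mathbf{u}$ with data $\mathbf{u}_0$, set $\mathbf{w}=\mathbf{u}_l-\mathbf{u}$; then $\mathbf{w}$ satisfies $d\mathbf{w}+A^\frac54\mathbf{w}\,dt+(B(\mathbf{u}_l,\mathbf{u}_l)-B(\mathbf{u},\mathbf{u}))\,dt=0$ with $\mathbf{w}(0)=\mathbf{u}_{0,l}-\mathbf{u}_0$. Testing with $\mathbf{w}$ and using $B(\mathbf{u}_l,\mathbf{u}_l)-B(\mathbf{u},\mathbf{u})=B(\mathbf{w},\mathbf{u}_l)+B(\mathbf{u},\mathbf{w})$ with $b(\mathbf{u},\mathbf{w},\mathbf{w})=0$, the computation of \eqref{2.120} gives
\begin{align*}
\frac{d}{dt}\|\mathbf{w}\|_H^2+\|\Lambda^\frac54\mathbf{w}\|_{L^2}^2\leq \|\mathbf{w}\|_H^2\,\|\mathbf{u}_l\|_{H^\frac54}^2,
\end{align*}
and Gronwall's lemma yields $\sup_{t\in[0,T]}\|\mathbf{w}(t)\|_H^2\leq \|\mathbf{u}_{0,l}-\mathbf{u}_0\|_H^2\exp\big(\int_0^T\|\mathbf{u}_l\|_{H^\frac54}^2\,dt\big)$; since the exponent is bounded uniformly in $l$ by \eqref{3.2}, taking $\mathbf{u}_{0,l}=\mathbf{u}_0$ gives uniqueness and the general case gives $\mathbf{u}_l\to\mathbf{u}$ in $C([0,T];H)$. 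The main obstacle is the passage to the limit in the nonlinear term, but the hyperdissipative exponent $\frac54$ makes the required embeddings available and the estimate \eqref{2.120} does the job; everything else is routine and can be referenced to the standard literature (e.g. \cite{Debussche, bre}).
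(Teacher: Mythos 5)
Your proposal is correct and follows essentially the same route as the paper: the paper dispenses with existence and uniqueness by appealing to the standard (Galerkin, compactness) argument -- exactly the one you spell out -- and its only written step is the continuous-dependence estimate, which coincides with yours (difference equation, the bound of type \eqref{2.120} with the cancellation \eqref{2.1}, and Gronwall together with $\mathbf{u}\in L^2(0,T;H^{\frac54})$). Your write-up simply supplies the details (uniform bounds, Aubin--Lions, passage to the limit in $b$, Lions--Magenes continuity) that the paper leaves implicit, and they are all sound.
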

\begin{proof} \underline{{\rm Existence and Uniqueness}} The proof of existence and uniqueness is easier than the stochastic case, here we do not give the details.

\underline{{\rm Continuity dependence}} The continuity dependence argument is similar to the uniqueness argument, we only give a simplify proof.  Let $\overline{\mathbf{u}}=\mathbf{u}_l-\mathbf{u}$, then
\begin{align*}
d\|\overline{\mathbf{u}}\|_{H}^2+2\|\Lambda^\frac{5}{4} \overline{\mathbf{u}}\|_{L^2}^2dt=-2(B(\mathbf{u}_l,\mathbf{u}_l)-B(\mathbf{u},\mathbf{u}), \overline{\mathbf{u}})dt.
\end{align*}
Since
\begin{align*}
2(B(\mathbf{u}_l,\mathbf{u}_l)-B(\mathbf{u},\mathbf{u}), \overline{\mathbf{u}})\leq \|\Lambda^\frac{5}{4} \overline{\mathbf{u}}\|_{L^2}^2+C\|\overline{\mathbf{u}}\|_{H}^2\|\mathbf{u}_1\|_{H^\frac{5}{4}}^2,
\end{align*}
 we have
\begin{align*}
\sup_{t\in [0,T]}\|\overline{\mathbf{u}}\|_{H}^2+2\int_{0}^{T}\|\Lambda^\frac{5}{4} \overline{\mathbf{u}}\|_{L^2}^2dt\leq\|\mathbf{u}_{0,l}- \mathbf{u}_{0}\|_{H}^2{\rm exp}\left(C\int_{0}^{T}\|\mathbf{u}_1\|_{H^\frac{5}{4}}^2dt\right).
\end{align*}
By the fact $\mathbf{u}_1\in  L^2(0,T; H^\frac{5}{4})$, the continuity follows.
\end{proof}

We also need the high-order regularity estimate with time-weight used for the proof of the large deviations upper bound.
\begin{lemma}\label{lem3.2*} There exists some certain constant $C$ such that the solution $\mathbf{u}$ of skeleton equations \eqref{e1} satisfies
\begin{align*}
&\sup_{t\in [0,T]}\|\sqrt{t}\mathbf{u}\|_{V}^2+\int_{0}^{T}\|\sqrt{t}\Lambda^\frac{9}{4}\mathbf{u}\|_{L^2}^2dt\nonumber\\
&\leq \left(C(T+1)\int^T_0\|G\varphi\|_{H}^2dt+\|\mathbf{u}_0\|^2_{H}\right){\rm exp}\left(C\|\mathbf{u}_0\|_{H}^2+C\int_{0}^{T}\|G\varphi\|_{H}^2dt\right).
\end{align*}
\end{lemma}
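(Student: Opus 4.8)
The plan is to derive a time-weighted $H^1$ energy estimate, i.e. to bound $\tfrac{d}{dt}\big(t\|\mathbf{u}\|_V^2\big)$; the weight $t$ is forced on us because $\mathbf{u}_0$ lies only in $H$, so no control of $\|\mathbf{u}(0)\|_V$ is available, and the vanishing of $t$ at the origin compensates for this. Formally, testing \eqref{e1} against $A\mathbf{u}$ — which rigorously is carried out on the Galerkin truncations and then passed to the limit — and using $(A^{5/4}\mathbf{u},A\mathbf{u})=\|\Lambda^{9/4}\mathbf{u}\|_{L^2}^2$ gives
\begin{align*}
\tfrac12\tfrac{d}{dt}\|\mathbf{u}\|_V^2+\|\Lambda^{9/4}\mathbf{u}\|_{L^2}^2=-(B(\mathbf{u},\mathbf{u}),A\mathbf{u})+(G\varphi,A\mathbf{u}).
\end{align*}
The forcing term is harmless: since $2\le\tfrac94$, the Poincar\'e inequality gives $(G\varphi,A\mathbf{u})\le\|G\varphi\|_H\|\mathbf{u}\|_{H^2}\le\varepsilon\|\Lambda^{9/4}\mathbf{u}\|_{L^2}^2+C\|G\varphi\|_H^2$, so only $\|G\varphi\|_H$ enters, as the statement requires. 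Everything hinges on the nonlinear term.

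The 2D identity $(B(\mathbf{u},\mathbf{u}),A\mathbf{u})=0$ is unavailable in 3D (see the introduction), so I would estimate directly. By H\"older, $|(B(\mathbf{u},\mathbf{u}),A\mathbf{u})|\le\|\mathbf{u}\|_{L^6}\|\nabla\mathbf{u}\|_{L^3}\|A\mathbf{u}\|_{L^2}$; the endpoint Sobolev embeddings $H^1\hookrightarrow L^6$ and $H^{1/2}\hookrightarrow L^3$ in dimension three then give $\|\mathbf{u}\|_{L^6}\le C\|\mathbf{u}\|_V$, $\|\nabla\mathbf{u}\|_{L^3}\le C\|\mathbf{u}\|_{H^{3/2}}$, while $\|A\mathbf{u}\|_{L^2}=\|\mathbf{u}\|_{H^2}$. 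The key step is to interpolate the two intermediate norms \emph{between $H^{5/4}$ and $H^{9/4}$} rather than between $H^1$ and $H^{9/4}$: $\|\mathbf{u}\|_{H^{3/2}}\le\|\mathbf{u}\|_{H^{5/4}}^{3/4}\|\mathbf{u}\|_{H^{9/4}}^{1/4}$ and $\|\mathbf{u}\|_{H^2}\le\|\mathbf{u}\|_{H^{5/4}}^{1/4}\|\mathbf{u}\|_{H^{9/4}}^{3/4}$, so that the $H^{5/4}$- and the $H^{9/4}$-exponents each sum to $1$ and, by Young's inequality,
\begin{align*}
|(B(\mathbf{u},\mathbf{u}),A\mathbf{u})|\le C\|\mathbf{u}\|_V\|\mathbf{u}\|_{H^{5/4}}\|\mathbf{u}\|_{H^{9/4}}\le\varepsilon\|\Lambda^{9/4}\mathbf{u}\|_{L^2}^2+C\|\mathbf{u}\|_{H^{5/4}}^2\|\mathbf{u}\|_V^2.
\end{align*}
This is exactly the algebra that makes $\alpha=\tfrac54$ the borderline exponent: the surviving term is \emph{linear} in the energy $\|\mathbf{u}\|_V^2$, with coefficient $\|\mathbf{u}\|_{H^{5/4}}^2$, which by Lemma~\ref{lem3.1}, estimate \eqref{3.2}, is integrable on $[0,T]$; for $\alpha<\tfrac54$ the analogous bound is superlinear in $\|\mathbf{u}\|_V^2$ and does not close globally. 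I expect this H\"older--interpolation--Young bookkeeping to be the only genuinely delicate point.

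Absorbing the $\varepsilon$-terms into the dissipation (using the equivalence of $\|\Lambda^{9/4}\cdot\|_{L^2}$ with $\|\cdot\|_{H^{9/4}}$ on the zero-mean subspace) leaves $\tfrac{d}{dt}\|\mathbf{u}\|_V^2+\|\Lambda^{9/4}\mathbf{u}\|_{L^2}^2\le C\|\mathbf{u}\|_{H^{5/4}}^2\|\mathbf{u}\|_V^2+C\|G\varphi\|_H^2$; multiplying by $t$ and using $\tfrac{d}{dt}(t\|\mathbf{u}\|_V^2)=\|\mathbf{u}\|_V^2+t\tfrac{d}{dt}\|\mathbf{u}\|_V^2$ yields
\begin{align*}
\tfrac{d}{dt}\big(t\|\mathbf{u}\|_V^2\big)+t\|\Lambda^{9/4}\mathbf{u}\|_{L^2}^2\le C\|\mathbf{u}\|_{H^{5/4}}^2\big(t\|\mathbf{u}\|_V^2\big)+\|\mathbf{u}\|_V^2+Ct\|G\varphi\|_H^2.
\end{align*}
Finally I would apply Gronwall's lemma to $y(t)=t\|\mathbf{u}\|_V^2$ (with $y(0)=0$) and use $\|\mathbf{u}\|_V\le\|\mathbf{u}\|_{H^{5/4}}$, $\int_0^T t\|G\varphi\|_H^2\,dt\le T\int_0^T\|G\varphi\|_H^2\,dt$, and the a priori bound \eqref{3.2}, namely $\int_0^T\|\mathbf{u}\|_{H^{5/4}}^2\,dt\le\|\mathbf{u}_0\|_H^2+\int_0^T\|G\varphi\|_H^2\,dt$, to turn the right-hand side into $\big(C(T+1)\int_0^T\|G\varphi\|_H^2\,dt+\|\mathbf{u}_0\|_H^2\big)\exp\big(C\|\mathbf{u}_0\|_H^2+C\int_0^T\|G\varphi\|_H^2\,dt\big)$. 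Since $\|\sqrt t\,\mathbf{u}\|_V^2=t\|\mathbf{u}\|_V^2$ and $\|\sqrt t\,\Lambda^{9/4}\mathbf{u}\|_{L^2}^2=t\|\Lambda^{9/4}\mathbf{u}\|_{L^2}^2$, this is precisely the asserted inequality; the Galerkin justification of the formal computation is the only remaining routine point.
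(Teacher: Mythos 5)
Your proposal is correct and follows essentially the same route as the paper: a time-weighted estimate for $t\|\mathbf{u}\|_{V}^{2}$ obtained by testing with $A\mathbf{u}$, absorbing $(G\varphi,A\mathbf{u})$ by Young, bounding the trilinear term by $C\|\mathbf{u}\|_{V}\|\mathbf{u}\|_{H^{5/4}}\|\mathbf{u}\|_{H^{9/4}}$, and closing with Gronwall via the $L^{2}(0,T;H^{5/4})$ bound \eqref{3.2}. The only (harmless) difference is how that trilinear bound is reached: the paper integrates by parts to get $\int\partial_k\mathbf{u}_i\,\partial_i\mathbf{u}_j\,\partial_k\mathbf{u}_j\,dx\le\|\Lambda\mathbf{u}\|_{L^3}^3$ and applies Gagliardo--Nirenberg, whereas you use H\"older ($L^6$--$L^3$--$L^2$) with Sobolev embedding and interpolation, arriving at the same estimate.
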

\begin{proof} Taking inner product with $\mathbf{u}$ in \eqref{e1}, using \eqref{2.1} we have
\begin{align*}
d\|\mathbf{u}\|_{H}^2+2\|\Lambda^\frac{5}{4}\mathbf{u}\|_{L^2}^2dt=2(G\varphi, \mathbf{u})dt.
\end{align*}
Taking integral of $t$, using the H\"{o}lder inequality and the Young inequality
\begin{align*}
\sup_{t\in [0,T]}\|\mathbf{u}\|_{H}^2+2\int_{0}^{T}\|\Lambda^\frac{5}{4}\mathbf{u}\|_{L^2}^2dt=\|\mathbf{u}_0\|_{H}^2+2\int_{0}^{T}(G\varphi, \mathbf{u})dt\nonumber\\
\leq \|\mathbf{u}_0\|_{H}^2+\frac{1}{2}\sup_{t\in [0,T]}\|\mathbf{u}\|_{H}^2+C\int_{0}^{T}\|G\varphi\|_{H}^2dt.
\end{align*}
Re-arranging the order, we have
\begin{align*}
\frac{1}{2}\sup_{t\in [0,T]}\|\mathbf{u}\|_{H}^2+2\int_{0}^{T}\|\Lambda^\frac{5}{4}\mathbf{u}\|_{L^2}^2dt\leq \|\mathbf{u}_0\|_{H}^2+C\int_{0}^{T}\|G\varphi\|_{H}^2dt.
\end{align*}

Taking differential to product function $t\|\Lambda \mathbf{u}\|_{L^2}^2$, we get
\begin{align}\label{3.8}
&d(t\|\Lambda \mathbf{u}\|_{L^2}^2)=td\|\Lambda \mathbf{u}\|_{L^2}^2+\|\Lambda \mathbf{u}\|_{L^2}^2dt\nonumber\\
&=-2t\|\Lambda^\frac{9}{4}\mathbf{u}\|_{L^2}^2dt-2t(B(\mathbf{u}, \mathbf{u}), A\mathbf{u})dt\nonumber\\
&\quad+2t(G\varphi, A\mathbf{u}) dt+\|\Lambda \mathbf{u}\|_{L^2}^2dt.
\end{align}
We proceed to estimate the nonlinear terms on the right hand side of \eqref{3.8}. Since

\begin{align}
\left|-2t(B(\mathbf{u}, \mathbf{u}), A\mathbf{u})\right|=2t\left|\int_{\mathbb{T}^3}\partial_k \mathbf{u}_i \partial_i \mathbf{u}_j \partial_k \mathbf{u}_jdx\right|\leq 2t\|\Lambda \mathbf{u}\|_{L^3}^3.
\end{align}
Then by Gagliardo–Nirenberg inequality
$$\|\Lambda \mathbf{u}\|_{L^3}\leq C\|\Lambda \mathbf{u}\|^\frac{1}{3}_{L^2}\|\Lambda^\frac{5}{4} \mathbf{u}\|^\frac{1}{3}_{L^2}\|\Lambda^\frac{9}{4}\mathbf{u}\|^\frac{1}{3}_{L^2},$$
we have
\begin{align}
\left|-2t(B(\mathbf{u}, \mathbf{u}), A\mathbf{u})\right|&\leq 2Ct\|\Lambda \mathbf{u}\|_{L^2}\|\Lambda^\frac{5}{4} \mathbf{u}\|_{L^2}\|\Lambda^\frac{9}{4}\mathbf{u}\|_{L^2}\nonumber\\
&\leq t\|\Lambda^\frac{9}{4}\mathbf{u}\|_{L^2}^2+Ct\|\Lambda \mathbf{u}\|^2_{L^2}\|\Lambda^\frac{5}{4} \mathbf{u}\|^2_{L^2}.
\end{align}

By the H\"{o}lder inequality and the Young inequality,
\begin{align}\label{3.11}
2t(G\varphi, A\mathbf{u}) \leq \frac{t}{2}\|A\mathbf{u}\|_{H}^2+Ct\|G\varphi\|_{H}^2.
\end{align}
Combining \eqref{3.8}-\eqref{3.11}, we have
\begin{align}\label{2.24}
&\left\|\sqrt{t}\|\Lambda \mathbf{u}\|_{L^2}\right\|_{L^\infty}^2+\int_{0}^{T}\frac{t}{2}\|\Lambda^\frac{9}{4}\mathbf{u}\|_{L^2}^2dt\nonumber\\
&\leq C\int^T_0t\|\Lambda\mathbf{u}\|^2_{L^2}\|\Lambda^\frac{5}{4} \mathbf{u}\|^2_{L^2}dt\nonumber\\
&\quad+C\int^T_0t\|G\varphi\|_{H}^2dt+\int_{0}^{T}\|\Lambda \mathbf{u}\|_{L^2}^2dt\nonumber\\
&\leq  C\int^T_0t\|\Lambda\mathbf{u}\|^2_{L^2}\|\Lambda^\frac{5}{4} \mathbf{u}\|^2_{L^2}dt+C\int^T_0t\|G\varphi\|_{H}^2dt+\|\mathbf{u}_0\|_{H}^2+C\int_{0}^{T}\|G\varphi\|_{H}^2dt.
\end{align}
By \eqref{3.2}, using the Gronwall lemma to \eqref{2.24}, we see
\begin{align*}
&\left\|\sqrt{t}\|\Lambda \mathbf{u}\|_{L^2}\right\|_{L^\infty}^2+\int_{0}^{T}\frac{t}{2}\|\Lambda^\frac{9}{4}\mathbf{u}\|_{L^2}^2dt\nonumber\\
&\leq  \left(C\int^T_0t\|G\varphi\|_{H}^2dt+\|\mathbf{u}_0\|_{H}^2+C\int_{0}^{T}\|G\varphi\|_{H}^2dt\right){\rm exp}\left(\int^T_0\|\Lambda^\frac{5}{4} \mathbf{u}\|^2_{L^2}dt\right)\nonumber\\
&\leq \left(C\int^T_0t\|G\varphi\|_{H}^2dt+\|\mathbf{u}_0\|_{H}^2+C\int_{0}^{T}\|G\varphi\|_{H}^2dt\right){\rm exp}\left(C\|\mathbf{u}_0\|_{H}^2+C\int_{0}^{T}\|G\varphi\|_{H}^2dt\right).
\end{align*}
This completes the proof.
\end{proof}

\section{The unique ergodicity}
In this section, our main goal is to establish the unique ergodicity of Markov semigroup of solution to equations \eqref{Equ1.1}. We first give a preliminary result concerning the existence of invariant measure in the first part. Then, in the second part, we will devote to prove the unique ergodicity of invariant measure by establishing the exponential stability result using the small noise perturbation.

\subsection{The existence of invariant measure}
In this subsection, we show the existence of invariant measure using the Maslowski-Seidler theory \cite{Mas}, which tell us a $bw$-Feller semigroup has an invariant probability measure provided the set
\begin{align}\label{2.101}
\left\{\frac{1}{T_n}\int_{0}^{T_n}\mathbf{P}_t^*\mu dt, n\geq 1\right\}
\end{align}
is tight on $(H, bw)$. Let us introduce the meaning of notations in \eqref{2.101}. Define by $\mathbf{P}_t(x, \cdot)$ the transition probability
$$\mathbf{P}_t(x, \mathcal{O})=\mathbf{P}(\mathbf{u}(t, x)\in \mathcal{O}),$$
for set $\mathcal{O}\in \mathcal{B}(H)$, where $\mathbf{u}(t, x)$ is the pathwise solution of system \eqref{Equ1.1} starting from the initial data $x$.

For any bounded Borel function $\Phi\in \mathcal{B}_b(H)$, define a Markov transition semigroup
\begin{align*}
(\mathbf{P}_t\Phi)(x)=\mathrm{E}[\Phi(\mathbf{u}(t, x))],~ x\in H.
\end{align*}
Denote by $\mathbf{P}_t^*$ the dual of transition semigroup $\mathbf{P}_t$.
We say a probability measure $\mu$ on $\mathcal{B}(H)$ is an invariant measure if
$$\int_{H}\mathbf{P}_t\Phi d\mu=\int_{H}\Phi d\mu,~~ {\rm for~ all}~t\geq 0, ~\Phi\in \mathcal{B}_b(H).$$
An invariant measure $\mu$ is ergodic, if for all $\Phi\in L^2(H,\mu)$, we have
\begin{align*}
\lim_{T\rightarrow\infty}\frac{1}{T}\int_{0}^{T}\mathbf{P}_t\Phi dt=\int_{H}\Phi(\mathbf{u}) d\mu(\mathbf{u}),~ {\rm in}~L^2(H,\mu).
\end{align*}

We give more details of the Maslowski-Seidler theory for establishing the existence of invariant measure.
\begin{proposition}\label{pro5.1} \cite[Proposition 3.1]{Mas} Suppose that the semigroup $\mathbf{P}_t$ is sequentially weakly Feller, that is, $$\mathbf{P}_t: C_b(H_w)\rightarrow \mathcal{L}_b(H_w).$$
And assume that we can find a Borel probability measure $\nu$ on $H$ and $T_0>0$ such that for any $\varepsilon>0$ there exists $R>0$ satisfying
$$\sup_{T>T_0}\frac{1}{T}\int_{0}^{T}(\mathbf{P}_t^*\nu)(S)dt\leq \varepsilon,$$
where the set $S:=\left\{\mathbf{u}: \|\mathbf{u}\|_{H}>R\right\}$ for a certain constant $R>0$ and $\mathbf{P}_t^*$
is the dual of semigroup of $\mathbf{P}_t$.
Then, there exists an invariant measure for the semigroup $\mathbf{P}_t$.
\end{proposition}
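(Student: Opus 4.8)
The plan is to run a Krylov--Bogoliubov argument adapted to the bounded-weak topology. For $T>0$ set
$$\mu_T:=\frac{1}{T}\int_{0}^{T}\mathbf{P}_t^*\nu\, dt,$$
a Borel probability measure on $H$. Recall that, on the separable Hilbert space $H$, every closed ball $B_R:=\{\mathbf{u}:\|\mathbf{u}\|_{H}\le R\}$ is weakly compact (Banach--Alaoglu) and weakly metrizable, hence $bw$-compact; therefore the hypothesis says precisely that for each $\varepsilon>0$ there is $R>0$ with
$$\sup_{T>T_0}\mu_T(H\setminus B_R)=\sup_{T>T_0}\frac{1}{T}\int_{0}^{T}(\mathbf{P}_t^*\nu)(S)\, dt\le\varepsilon .$$
Thus the family $\{\mu_T:T>T_0\}$ is tight on $(H,bw)$.

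First I would fix a sequence $T_n\to\infty$ and invoke the Prokhorov-type compactness available on $(H,bw)$ under tightness — this is the structural input of \cite{Mas}, obtained by reducing to the metrizable compacta $B_R$ — to extract a subsequence, still denoted $T_n$, and a Borel probability measure $\mu$ on $H$ such that $\int_H\Phi\, d\mu_{T_n}\to\int_H\Phi\, d\mu$ for every $\Phi\in C_b(H_w)$. Tightness guarantees that the limit $\mu$ is again a probability measure, i.e.\ no mass escapes to infinity.

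Next I would verify invariance against test functions $\Phi\in C_b(H_w)$. Fix $s\ge0$. By the sequential weak Feller property $\mathbf{P}_s\Phi\in C_b(H_w)$, and by the semigroup property together with Fubini's theorem,
$$\int_H\mathbf{P}_s\Phi\, d\mu_{T_n}-\int_H\Phi\, d\mu_{T_n}=\frac{1}{T_n}\left(\int_{T_n}^{T_n+s}\!\!\Big(\int_H\mathbf{P}_t\Phi\, d\nu\Big)dt-\int_{0}^{s}\!\!\Big(\int_H\mathbf{P}_t\Phi\, d\nu\Big)dt\right),$$
whose absolute value is bounded by $2s\|\Phi\|_{\infty}/T_n\to0$. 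Since both $\mathbf{P}_s\Phi$ and $\Phi$ lie in $C_b(H_w)$, passing to the limit along $T_n$ yields $\int_H\mathbf{P}_s\Phi\, d\mu=\int_H\Phi\, d\mu$ for all $\Phi\in C_b(H_w)$ and all $s\ge0$.

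Finally I would upgrade this identity from $C_b(H_w)$ to all $\Phi\in\mathcal{B}_b(H)$: the measures $\mu$ and $\mathbf{P}_s^*\mu$ are finite Borel measures on $H$, the weak and norm Borel $\sigma$-algebras coincide on the separable space $H$, and $C_b(H_w)$ (which contains all maps $\mathbf{u}\mapsto\psi(\langle\mathbf{u},h_1\rangle,\dots,\langle\mathbf{u},h_n\rangle)$, $\psi\in C_b(\mathbb{R}^n)$) is measure-determining; hence $\mathbf{P}_s^*\mu=\mu$ and $\mu$ is an invariant measure for $\mathbf{P}_t$. The main obstacle is the non-metrizability of $H_w$, which blocks a direct appeal to Prokhorov's theorem and to the usual narrow-convergence machinery; it is handled exactly as in \cite{Mas} by working inside the $bw$-compact (hence metrizable) balls $B_R$, and by using the sequential weak Feller property to keep $\mathbf{P}_s\Phi$ a legitimate test function in $C_b(H_w)$. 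A minor point requiring care is that the telescoping estimate is performed along $T_n\to\infty$, so the restriction $T>T_0$ used for tightness causes no difficulty.
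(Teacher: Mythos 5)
Your proposal correctly reconstructs the Krylov--Bogoliubov-type argument that underlies \cite[Prop.\ 3.1]{Mas}; note that the paper itself does not supply a proof here but simply cites Maslowski--Seidler, so there is no internal proof to compare against. Your three steps --- forming the time averages $\mu_T$, extracting a $bw$-narrow limit via a Prokhorov-type theorem valid on $(H,bw)$ (which is exactly the nontrivial structural input supplied by \cite{Mas}, using that $(H,bw)$ is a countable union of metrizable compacta $B_n$), and then verifying invariance through the telescoping identity and the sequential weak Feller hypothesis --- are precisely the ingredients of the original argument, and the final upgrade from $C_b(H_w)$ to $\mathcal{B}_b(H)$ via measure-determining cylinder functions is sound because the weak and norm Borel $\sigma$-algebras on a separable Hilbert space coincide. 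The one point you treat somewhat as a black box is the compactness step: you do not prove that tight sequences of measures on $(H,bw)$ have convergent subsequences, but you correctly flag this as the place where the reference's machinery is used, and the reduction to the $bw$-compact (hence metrizable) balls $B_R$ is indeed how \cite{Mas} handles the non-metrizability of $H_w$. No genuine gap.
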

\begin{remark} Note that, the Maslowski-Seidler theory extended the classical Krylov-Bogoliubov theory, which relaxes the Feller condition to sequentially weak Feller. As a result, we do not need the higher-order energy estimates which simplifies the proof.
\end{remark}

\begin{proposition}\label{pro3.1**} Suppose that $G\in L_2(H; H)$ holds. Then, the transition semigroup $\mathbf{P}_t$ has an invariant measure $\mu$.
\end{proposition}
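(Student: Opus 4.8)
The plan is to verify the two hypotheses of Proposition \ref{pro5.1} (the Maslowski--Seidler criterion): first that $\mathbf{P}_t$ is sequentially weakly Feller, and second that the time-averaged family of measures, started from a suitable initial law, does not escape to infinity. For the tightness-type condition we will simply take $\nu=\delta_{\mathbf{u}_0}$ (or $\nu=\delta_0$) and exploit that on $H$ equipped with the bounded-weak topology $bw$, the balls $\{\|\mathbf{u}\|_H\le R\}$ are compact; hence controlling $\frac{1}{T}\int_0^T (\mathbf{P}_t^*\nu)(\{\|\mathbf{u}\|_H>R\})\,dt$ uniformly for large $T$ is exactly what is needed.

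\smallskip

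First I would establish the sequentially weak Feller property. Let $x_n\to x$ weakly in $H$; then $\{x_n\}$ is bounded in $H$, and by Lemma \ref{lem2.2*} (with $\varepsilon$ fixed, say $\varepsilon=1$) together with Lemma \ref{lem2.2} the laws of the solutions $\mathbf{u}(\cdot,x_n)$ are tight in $C([0,T];H_w)\cap L^2(0,T;H^{5/4})$ — indeed the uniform bound $\mathrm{E}\sup_{t}\|\mathbf{u}(t,x_n)\|_H^{2}+\mathrm{E}\int_0^T\|\mathbf{u}(t,x_n)\|_{H^{5/4}}^2\,dt\le C$ and the time-derivative bound in $H^{-5/4}$ give compactness via Aubin--Lions and via the weak-in-$H$ analogue. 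Passing to a subsequential limit and identifying it, using the uniqueness from the Proposition on well-posedness, one sees that every subsequence has a further subsequence along which $\mathbf{u}(t,x_n)\rightharpoonup \mathbf{u}(t,x)$ in law in $H_w$ for each fixed $t$; hence for $\Phi\in C_b(H_w)$, $(\mathbf{P}_t\Phi)(x_n)=\mathrm{E}\Phi(\mathbf{u}(t,x_n))\to \mathrm{E}\Phi(\mathbf{u}(t,x))=(\mathbf{P}_t\Phi)(x)$. This yields $\mathbf{P}_t:C_b(H_w)\to \mathcal{L}_b(H_w)$.

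\smallskip

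Second I would verify the averaged non-escape condition. Starting from $\mathbf{u}_0$, the energy balance \eqref{a2.14} (with $\varepsilon=1$), after taking expectations and using the Poincar\'e inequality $\|\mathbf{u}\|_{H^{5/4}}^2\ge \lambda_1\|\mathbf{u}\|_H^2=\|\mathbf{u}\|_H^2$, gives
\begin{align*}
\frac{d}{dt}\mathrm{E}\|\mathbf{u}(t)\|_H^2 + 2\mathrm{E}\|\mathbf{u}(t)\|_H^2 \le \|G\|_{L_2(H,H)}^2,
\end{align*}
so that $\mathrm{E}\|\mathbf{u}(t)\|_H^2 \le \|\mathbf{u}_0\|_H^2 e^{-2t} + \tfrac12\|G\|_{L_2(H,H)}^2$, and integrating in time,
\begin{align*}
\frac{1}{T}\int_0^T \mathrm{E}\|\mathbf{u}(t)\|_H^2\,dt \le \frac{\|\mathbf{u}_0\|_H^2}{2T} + \frac12\|G\|_{L_2(H,H)}^2 =: K,
\end{align*}
a bound uniform in $T>T_0$. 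By Chebyshev, $\frac1T\int_0^T(\mathbf{P}_t^*\delta_{\mathbf{u}_0})(\{\|\mathbf{u}\|_H>R\})\,dt = \frac1T\int_0^T \mathbf{P}(\|\mathbf{u}(t,\mathbf{u}_0)\|_H>R)\,dt \le K/R^2$, which is $\le\varepsilon$ once $R>\sqrt{K/\varepsilon}$. Since closed balls of $H$ are compact — hence tight — in the $bw$-topology, this is precisely the hypothesis of Proposition \ref{pro5.1}, so an invariant measure $\mu$ exists.

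\smallskip

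The main obstacle is the first step: passing to the weak limit in the nonlinear term $B(\mathbf{u},\mathbf{u})$ while only having weak convergence of initial data and the relatively weak compactness coming from the $H$-energy estimate. The hyperdissipative structure is what rescues this — the $L^2(0,T;H^{5/4})$ bound, combined with the embeddings $H^{5/4}\hookrightarrow L^{12}$, $H^{5/4}\hookrightarrow W^{1,12/5}$ already used in \eqref{2.120}, makes $B(\mathbf{u},\mathbf{u})$ bounded in a negative-order space and gives strong convergence in $L^2(0,T;H)$ by Aubin--Lions, so the limit identification goes through; one must also be slightly careful that weak convergence in law in $H_w$ at a fixed time $t$ (rather than merely in the path space) is what Proposition \ref{pro5.1} requires, which follows from testing against finitely many $h\in H$ and a standard diagonal argument.
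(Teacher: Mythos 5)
Your proposal is correct and follows essentially the same route as the paper: you verify the two hypotheses of the Maslowski--Seidler criterion (Proposition \ref{pro5.1}) by (i) deducing the sequential weak Feller property from the uniform bounds of Lemma \ref{lem2.2*} and Lemma \ref{lem2.2}, Aubin--Lions tightness in the path space and uniqueness to identify the limit, and (ii) checking the averaged tightness of $\frac{1}{T}\int_0^T\mathbf{P}_t^*\delta_{\mathbf{u}_0}\,dt$ on balls of $(H,bw)$ via Chebyshev and the $L^2$ energy estimate. The only cosmetic difference is that you obtain the uniform-in-$T$ moment bound from the differential inequality with exponential decay, while the paper divides the linear-in-$T$ bound of Lemma \ref{lem2.2*} (combined with the Poincar\'e inequality) by $T$; both yield the same conclusion.
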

\begin{proof} We show that the semigroup $\mathbf{P}_t$ is sequentially weak Feller. Corresponding to the sequence of initial data $\mathbf{u}_{0,l}\in H$, there exists a sequence $H$-valued $\mathcal{F}_t$-progressive measurable processes $\mathbf{u}_{l}$ as the solutions of equations \eqref{Equ1.1}. As Lemma \ref{lem2.2*} and Lemma \ref{lem2.2},  the family $\{\mathbf{u}_{l}\}_{l\geq 1}$ has uniform bound in $L^p(\Omega; C([0,T]; H)\cap L^2(0,T; H^\frac{5}{4}))$ and $L^2(\Omega; W^{1,2}(0,T; H^{-\frac{5}{4}})$ with respect to $l$. Using the Aubin-Lions lemma, we could infer that the law of the family $\{\mathbf{u}_{l}\}_{l\geq 1}$ is tight in $\mathcal{X}$ where $\mathcal{X}=C([0,T]; H_w)\cap L^2(0,T; H)$.

 Furthermore, the Skorokhod representation theorem implies that  there exist a new probability space
$(\widetilde{\Omega}, \widetilde{\mathcal{F}}, \widetilde{\mathrm{P}})$, a new subsequence $\widetilde{\mathbf{u}}_{l_k}$ and the process $\widetilde{\mathbf{u}}$ such that
$$\widetilde{\mathbf{u}}_{l_k}~{\rm and} ~\mathbf{u}_{l_k}, ~ \widetilde{\mathbf{u}}~{\rm and}~\mathbf{u} ~{\rm have ~ the ~same ~joint~ distribution~ in} ~\mathcal{X},$$
and
$$\widetilde{\mathbf{u}}_{l_k}\rightarrow \widetilde{\mathbf{u}} ~{\rm in~ the~ topology ~of~ }\mathcal{X}, ~\widetilde{\mathrm{P}}\mbox{ a.s.}$$

The convergence  together with the fact that $\phi$ is a bounded sequentially weakly continuous function yields
$$\phi(\widetilde{\mathbf{u}}_{l_k})\rightarrow \phi(\widetilde{\mathbf{u}})~ {\rm in} ~\mathbb{R}, ~\widetilde{\mathrm{P}}\mbox{ a.s.}$$
The fact that the processes $\widetilde{\mathbf{u}}_{l_k}$ and $\mathbf{u}_{l_k}$, $\widetilde{\mathbf{u}}$ and $\mathbf{u}$ having the same distribution leads to
\begin{align}
&\widetilde{\mathrm{E}}\left[\phi(\widetilde{\mathbf{u}}_{l_k}(t;\widetilde{\mathbf{u}}_{0,l_k}))\right]
=\mathrm{E}\left[\phi(\mathbf{u}_{l_k}(t;\mathbf{u}_{0,l_k}))\right]
=(\mathbf{P}_t\phi)(\mathbf{u}_{0,l_k}),\label{5.2*}\\
&\widetilde{\mathrm{E}}\left[\phi(\widetilde{\mathbf{u}}(t;\widetilde{\mathbf{u}}_0))\right]
=\mathrm{E}\left[\phi(\mathbf{u}(t;\mathbf{u}_0))\right]=(\mathbf{P}_t\phi)(\mathbf{u}_{0}).\label{5.3*}
\end{align}
Therefore, by \eqref{5.2*} and \eqref{5.3*}, we have
$$\lim_{k\rightarrow\infty}(\mathbf{P}_t\phi)(\mathbf{u}_{0,l_k})=(\mathbf{P}_t\phi)(\mathbf{u}_{0}).$$
Using the sub-subsequence argument, we obtain that the original sequence satisfies
$$\lim_{l\rightarrow\infty}(\mathbf{P}_t\phi)(\mathbf{u}_{0,l})=(\mathbf{P}_t\phi)(\mathbf{u}_{0}).$$
Obviously, $\mathbf{P}_t\phi$ from $H$ into $\mathbb{R}$ is bounded. We conclude that the semigroup $\mathbf{P}_t$ is sequentially weak Feller, thus,
 $$\mathbf{P}_t: C_b(H_w)\rightarrow C_b(H_w).$$

 Then, it enough to show the time average measure set \eqref{2.101} is tight on $(H, bw)$. By the Chebyshev inequality, the Poincar\'{e} inequality  and Lemma \ref{lem2.2*}, we get
\begin{align*}
\frac{1}{T}\int_{0}^{T}(\mathbf{P}_t^*\delta_{x})(H\setminus B_R)dt&=\frac{1}{T}\int_{0}^{T}\mathrm{P}\{\|\mathbf{u}\|_{H}>R\}dt\nonumber\\
&\leq \frac{1}{R^2T}\int_{0}^{T}\mathrm{E}\|\mathbf{u}\|_{H}^2dt\nonumber\\
&\leq \frac{C}{R^2T}\int_{0}^{T}\mathrm{E}\|\Lambda^\frac{5}{4} \mathbf{u}\|_{L^2}^2dt\nonumber\\
&\leq \frac{C+CT}{R^2T},
\end{align*}
where the set $B_R:=\{\mathbf{u}:\|\mathbf{u}\|_{H}\leq R\}$. Then, the existence of invariant measure follows from the Maslowski and Seidler theory, Proposition \ref{pro5.1}. (see also Proposition 3.1 in \cite{Mas}).
\end{proof}

\subsection{The unique ergodicity} The uniqueness argument of invariant measure is a much more challenging topic. For the non-degenerate noise, the unique ergodicity could be achieved generally by two classical methods: the first method is to establish the exponential stability result; Second one should be more probabilistic
arguments, that is, to prove strong Feller property and irreducible of transition semigroup $\mathbf{P}_t$, see \cite[Section 7]{dz}. However, the strong Feller property fails to hold when the noise is spatially degeneration, Hairer and  Mattingly introduced the concept of asymptotic strong Feller to cope with this problem in \cite{Hai-Mat}. Alternatively, for moderately degenerate noise, an asymptotic coupling method that has shown effectively for the proof of ergodicity was developed by \cite{Ha,ma}.

 Here, the unique ergodcity as an auxiliary result, we give the straightforward deterministic argument which relies on the following exponential moment and exponential stability estimates.
 \begin{lemma}\label{lem2.1*} There exists a constant $C$ independence of $T$ such that the solution $\mathbf{u}^\varepsilon$ of equations \eqref{e1*} satisfies exponential moment
 \begin{align*}
 \mathrm{E}{\rm exp}\left(\|\mathbf{u}^\varepsilon\|_{H}^2+\int_{0}^{T}\|\Lambda^\frac{5}{4}\mathbf{u}^\varepsilon\|_{L^2}^2dt\right)\leq {\rm exp}\left(\|\mathbf{u}^\varepsilon_0\|_{H}^2\right)+{\rm exp}\left(C\varepsilon T\|G\|^2_{L_2(H; H)} \right).
 \end{align*}
 \end{lemma}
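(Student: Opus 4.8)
The plan is to derive an exponential supermartingale-type estimate from the It\^o formula for $\frac{1}{2}\|\mathbf{u}^\varepsilon\|_H^2$, then exponentiate and use a stopping-time argument together with the supermartingale property to control the exponential moment. First I would recall from \eqref{a2.14} that, writing $\rho_\varepsilon(t):=\|\mathbf{u}^\varepsilon(t)\|_H^2+2\int_0^t\|\Lambda^{\frac54}\mathbf{u}^\varepsilon\|_{L^2}^2\,dr$, the It\^o formula gives
\begin{align*}
d\rho_\varepsilon(t)=2\sqrt{\varepsilon}(G\,dW,\mathbf{u}^\varepsilon)+\varepsilon\|G\|_{L_2(H;H)}^2\,dt.
\end{align*}
The martingale part $M(t):=2\sqrt{\varepsilon}\int_0^t(G\,dW,\mathbf{u}^\varepsilon)$ has quadratic variation $\langle M\rangle_t=4\varepsilon\int_0^t\sum_k(Ge_k,\mathbf{u}^\varepsilon)^2\,dr\le 4\varepsilon\int_0^t\|G\|_{L_2(H;H)}^2\|\mathbf{u}^\varepsilon\|_H^2\,dr$. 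The key point is that this quadratic variation is itself controlled by the dissipation term after using the Poincar\'e inequality $\|\mathbf{u}^\varepsilon\|_H^2\le\|\Lambda^{\frac54}\mathbf{u}^\varepsilon\|_{L^2}^2$ (since $\lambda_1=1$), so that $\langle M\rangle_t\le 4\varepsilon\|G\|_{L_2(H;H)}^2\int_0^t\|\Lambda^{\frac54}\mathbf{u}^\varepsilon\|_{L^2}^2\,dr$.

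The main step is then to consider the exponential $Z(t):=\exp\big(\rho_\varepsilon(t)-\varepsilon t\|G\|_{L_2(H;H)}^2-\frac12\langle M\rangle_t\big)$; by It\^o's formula $Z(t)=Z(0)\exp(M(t)-\frac12\langle M\rangle_t)\cdot(\text{correction})$, and one checks that $Z$ is a positive local supermartingale, hence a supermartingale after a localization by stopping times $\tau_N$, giving $\mathrm{E}\,Z(t\wedge\tau_N)\le Z(0)=\exp(\|\mathbf{u}_0^\varepsilon\|_H^2)$. Since $\frac12\langle M\rangle_t\le 2\varepsilon\|G\|_{L_2(H;H)}^2\int_0^t\|\Lambda^{\frac54}\mathbf{u}^\varepsilon\|_{L^2}^2\,dr$, provided $2\varepsilon\|G\|_{L_2(H;H)}^2\le 2$ (which can be arranged, or absorbed into a constant by rescaling the exponent) the quantity $\rho_\varepsilon(t)-\frac12\langle M\rangle_t$ still dominates $\|\mathbf{u}^\varepsilon(t)\|_H^2+c\int_0^t\|\Lambda^{\frac54}\mathbf{u}^\varepsilon\|_{L^2}^2\,dr$ for some $c>0$; modulo adjusting constants (which is where the factor $C$ in the statement enters), this yields
\begin{align*}
\mathrm{E}\exp\Big(\|\mathbf{u}^\varepsilon(t\wedge\tau_N)\|_H^2+\int_0^{t\wedge\tau_N}\|\Lambda^{\frac54}\mathbf{u}^\varepsilon\|_{L^2}^2\,dr\Big)\le \exp(\|\mathbf{u}_0^\varepsilon\|_H^2)\exp(C\varepsilon t\|G\|_{L_2(H;H)}^2),
\end{align*}
and one passes $N\to\infty$ by Fatou's lemma and takes $t=T$.

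I expect the main obstacle to be the careful bookkeeping of constants so that the dissipation term $\int_0^t\|\Lambda^{\frac54}\mathbf{u}^\varepsilon\|_{L^2}^2\,dr$ is not entirely consumed by the It\^o correction $\frac12\langle M\rangle_t$: this forces one either to exploit the smallness of $\varepsilon$ (as the paper emphasizes, the small perturbation coefficient is what makes the argument go through without assuming large viscosity) or to introduce a tuning parameter $\kappa\in(0,1)$ in the exponent, prove the estimate for $\kappa\rho_\varepsilon$, and absorb $\kappa$ into $C$; the subadditivity $e^{a}+e^{b}\ge e^{a+b}$ type manipulations and the split of the right-hand side into $\exp(\|\mathbf{u}_0^\varepsilon\|_H^2)+\exp(C\varepsilon T\|G\|_{L_2}^2)$ rather than a product are then purely cosmetic and follow from $e^{a+b}\le e^{2a}+e^{2b}$ after relabeling. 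The localization/supermartingale justification (to make the local-martingale-to-supermartingale passage rigorous, using that $\mathbf{u}^\varepsilon\in L^p(\Omega;C([0,T];H))$ from Lemma \ref{lem2.2*}) is routine but should be mentioned.
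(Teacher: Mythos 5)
Your proposal is correct, and it reaches the same product bound $\exp(\|\mathbf{u}_0^\varepsilon\|_H^2+C\varepsilon T\|G\|^2_{L_2(H;H)})$ that the paper's proof actually produces, but by a different mechanism. The paper applies the It\^{o} formula directly to ${\rm exp}(\Psi(t))$ with $\Psi(t)=\|\mathbf{u}^\varepsilon(t)\|_H^2+\int_0^t\|\Lambda^{5/4}\mathbf{u}^\varepsilon\|_{L^2}^2ds$, observes that the second-order It\^{o} correction $2\varepsilon\|\mathbf{u}^\varepsilon\|_H^2\|G\|^2_{L_2(H;H)}$ is absorbed by the remaining dissipation $-\|\Lambda^{5/4}\mathbf{u}^\varepsilon\|_{L^2}^2$ via Poincar\'{e} once $2\varepsilon\|G\|^2_{L_2(H;H)}<1$, then kills the martingale term with the stopping time $\tau_N$ and closes the estimate with the Gronwall lemma applied to $\mathrm{E}\,{\rm exp}(\Psi(\tau_N\wedge t))$. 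You instead subtract the compensator explicitly and use that $Z(t)={\rm exp}(\|\mathbf{u}_0^\varepsilon\|_H^2)\,{\rm exp}(M(t)-\tfrac12\langle M\rangle_t)$ is a nonnegative local martingale, hence a supermartingale, which gives $\mathrm{E}Z(t\wedge\tau_N)\leq {\rm exp}(\|\mathbf{u}_0^\varepsilon\|_H^2)$ with no Gronwall step; the Poincar\'{e} inequality and the smallness of $\varepsilon$ then enter only to show that the exponent of $Z$, after adding back $\varepsilon t\|G\|^2_{L_2(H;H)}$, dominates $\Psi$. The two routes use identical ingredients (the energy identity \eqref{a2.14}, $\sum_k(Ge_k,\mathbf{u}^\varepsilon)^2\leq\|G\|^2_{L_2(H;H)}\|\mathbf{u}^\varepsilon\|_H^2$, Poincar\'{e} with $\lambda_1=1$, localization) and the same smallness threshold; yours is arguably cleaner in that the deterministic drift is handled exactly rather than through Gronwall, while the paper's is more elementary in that it never invokes the supermartingale property of the stochastic exponential. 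Two small quantitative remarks: to keep the full coefficient $1$ on $\int_0^t\|\Lambda^{5/4}\mathbf{u}^\varepsilon\|_{L^2}^2ds$ in the exponent you need $2\varepsilon\|G\|^2_{L_2(H;H)}\leq 1$, not $\leq 2$ as written (your fallback with the tuning parameter $\kappa$ would only prove a weaker statement, so rely on the smallness of $\varepsilon$, exactly as the paper does); and your product-to-sum conversion via $e^{a+b}\leq e^{2a}+e^{2b}$ doubles the exponent $\|\mathbf{u}_0^\varepsilon\|_H^2$, but the paper's own last line has the same looseness (Gronwall gives the product, and the stated sum does not literally follow), so this is not a gap relative to the paper's argument.
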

\begin{proof} Let
$$\Psi(t):=\|\mathbf{u}^\varepsilon(t)\|_{H}^2+\int_{0}^{t}\|\Lambda^\frac{5}{4}\mathbf{u}^\varepsilon\|_{L^2}^2ds.$$
Using the It\^{o} formula to ${\rm exp}^{\Psi(t)}$ and \eqref{a2.14}, we have
\begin{align*}
{\rm exp}^{\Psi(t)}=&{\rm exp}^{\Psi(0)}-\int_{0}^{t}{\rm exp}^{\Psi(s)}\|\Lambda^\frac{5}{4}\mathbf{u}^\varepsilon\|_{L^2}^2ds\nonumber\\
&+2\int_{0}^{t}{\rm exp}^{\Psi(s)}(\sqrt{\varepsilon}GdW, \mathbf{u}^\varepsilon)+\int_{0}^{t}\varepsilon{\rm exp}^{\Psi(s)}\|G\|^2_{L_2(H; H)} ds\nonumber\\
&+2\int_{0}^{t}{\rm exp}^{\Psi(s)}\sum_{k\in \mathbb{Z}_0^3}(\sqrt{\varepsilon}Ge_k, \mathbf{u}^\varepsilon)^2ds.
\end{align*}
The H\"{o}lder inequality yields
\begin{align}\label{3.5}
2\int_{0}^{t}{\rm exp}^{\Psi(s)}\sum_{k\in \mathbb{Z}_0^3}(\sqrt{\varepsilon}Ge_k, \mathbf{u}^\varepsilon)^2ds
\leq 2\varepsilon\int_{0}^{t}{\rm exp}^{\Psi(s)}\|\mathbf{u}^\varepsilon\|_{H}^2\|G\|^2_{L_2(H; H)}ds.
\end{align}
Using the Poincar\'{e} inequality and \eqref{3.5}, we obtain
\begin{align*}
&-\int_{0}^{t}{\rm exp}^{\Psi(s)}\|\Lambda^\frac{5}{4}\mathbf{u}^\varepsilon\|_{H}^2ds+2\int_{0}^{t}{\rm exp}^{\Psi(s)}\sum_{k\in \mathbb{Z}_0^3}(\sqrt{\varepsilon}Ge_k, \mathbf{u}^\varepsilon)^2ds\nonumber\\
&\leq \int_{0}^{t}{\rm exp}^{\Psi(s)}\left(-\|\Lambda^\frac{5}{4}\mathbf{u}^\varepsilon\|_{L^2}^2+2\varepsilon\|\mathbf{u}^\varepsilon\|_{H}^2\|G\|^2_{L_2(H; H)} \right)ds\nonumber\\
&\leq \int_{0}^{t}{\rm exp}^{\Psi(s)}\left(-1+2\varepsilon\|G\|^2_{L_2(H; H)} \right)\|\mathbf{u}^\varepsilon\|^2_{H}ds.
\end{align*}
Since we study the asymptotic behaviour of solutions as $\varepsilon\rightarrow 0$, we could choose $\varepsilon$ small enough such that
$$-1+2\varepsilon\|G\|^2_{L_2(H; H)}<0,$$
which implies
\begin{align}\label{3.5*}
{\rm exp}^{\Psi(t)}\leq {\rm exp}^{\Psi(0)}+2\int_{0}^{t}{\rm exp}^{\Psi(s)}(\sqrt{\varepsilon}GdW, \mathbf{u}^\varepsilon)+\int_{0}^{t}\varepsilon{\rm exp}^{\Psi(s)}\|G\|^2_{L_2(H; H)}ds.
\end{align}

For any fixed $N>0$, define by $\tau_N$ the stopping time
$$\tau_{N}=\inf\left\{t>0, \|\mathbf{u}^\varepsilon\|_{H}+\int_{0}^{t}\|\Lambda^\frac{5}{4}\mathbf{u}^\varepsilon\|_{L^2}^2ds>N\right\},$$
on $[0, \tau_{N}\wedge t]$, we have from \eqref{3.5*}
\begin{align}\label{3.6}
\mathrm{E}{\rm exp}^{\Psi(\tau_{N}\wedge t)}\leq {\rm exp}^{\Psi(0)}+\mathrm{E}\int_{0}^{\tau_{N}\wedge t}\varepsilon{\rm exp}^{\Psi(s)}\|G\|^2_{L_2(H; H)}  ds.
\end{align}
Finally, we have by the Gronwall lemma and passing $N\rightarrow\infty$ in \eqref{3.6}
\begin{align*}
\mathrm{E}{\rm exp}^{\Psi(t)}\leq {\rm exp}^{\Psi(0)}+{\rm exp}^{C\varepsilon t\|G\|^2_{L_2(H; H)} },
\end{align*}
where $C$ is independence of $t$. We complete the proof.
\end{proof}

\begin{lemma}\label{lem3.2}  Assume that $\mathbf{u}^\varepsilon_1$ and $\mathbf{u}^\varepsilon_2$ are two solutions of equations \eqref{e1*} corresponding to the initial data $\mathbf{u}_1(0)$ and $\mathbf{u}_2(0)$, then the exponential stability holds
\begin{align*}
\mathrm{E}\|\mathbf{u}^\varepsilon_1-\mathbf{u}^\varepsilon_2\|_{H}^2\leq \|\mathbf{u}_1(0)-\mathbf{u}_2(0)\|_{H}^2{\rm exp}\left(-kt\right),
\end{align*}
for $k$ being positive constant.
\end{lemma}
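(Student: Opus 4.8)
The plan is to estimate the difference $\mathbf{w}^\varepsilon := \mathbf{u}^\varepsilon_1 - \mathbf{u}^\varepsilon_2$ directly, exploiting the fact that the noise is additive, so that $\mathbf{w}^\varepsilon$ solves the \emph{deterministic-looking} (pathwise) equation
\begin{align*}
d\mathbf{w}^\varepsilon + A^\frac{5}{4}\mathbf{w}^\varepsilon\,dt + \bigl(B(\mathbf{u}^\varepsilon_1,\mathbf{u}^\varepsilon_1) - B(\mathbf{u}^\varepsilon_2,\mathbf{u}^\varepsilon_2)\bigr)dt = 0,
\end{align*}
with $\mathbf{w}^\varepsilon(0) = \mathbf{u}_1(0) - \mathbf{u}_2(0)$. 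Applying the It\^o formula to $\|\mathbf{w}^\varepsilon\|_H^2$ (there is no It\^o correction, since the stochastic integrals cancel), and using $B(\mathbf{u}^\varepsilon_1,\mathbf{u}^\varepsilon_1) - B(\mathbf{u}^\varepsilon_2,\mathbf{u}^\varepsilon_2) = B(\mathbf{w}^\varepsilon,\mathbf{u}^\varepsilon_1) + B(\mathbf{u}^\varepsilon_2,\mathbf{w}^\varepsilon)$ together with \eqref{2.1} to kill the second term, I get
\begin{align*}
\frac{1}{2}d\|\mathbf{w}^\varepsilon\|_H^2 + \|\Lambda^\frac{5}{4}\mathbf{w}^\varepsilon\|_{L^2}^2\,dt = -(B(\mathbf{w}^\varepsilon,\mathbf{u}^\varepsilon_1),\mathbf{w}^\varepsilon)\,dt.
\end{align*}
The nonlinear term is handled exactly as in \eqref{2.120}: using the embeddings $H^\frac{5}{4}\hookrightarrow L^{12}$ and $H^\frac{5}{4}\hookrightarrow W^{1,\frac{12}{5}}$, one bounds it by $\frac{1}{2}\|\Lambda^\frac{5}{4}\mathbf{w}^\varepsilon\|_{L^2}^2 + \frac{1}{2}\|\mathbf{w}^\varepsilon\|_H^2\|\mathbf{u}^\varepsilon_1\|_{H^\frac{5}{4}}^2$, which after absorbing gives
\begin{align*}
d\|\mathbf{w}^\varepsilon\|_H^2 + \|\Lambda^\frac{5}{4}\mathbf{w}^\varepsilon\|_{L^2}^2\,dt \leq \|\mathbf{w}^\varepsilon\|_H^2\|\mathbf{u}^\varepsilon_1\|_{H^\frac{5}{4}}^2\,dt.
\end{align*}

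The next step is to convert the dissipative term into a genuine exponential decay: by the Poincar\'e inequality ($\lambda_1 = 1$), $\|\Lambda^\frac{5}{4}\mathbf{w}^\varepsilon\|_{L^2}^2 \geq c\|\mathbf{w}^\varepsilon\|_H^2$ for some $c > 0$, so that
\begin{align*}
d\|\mathbf{w}^\varepsilon\|_H^2 \leq \bigl(-c + \|\mathbf{u}^\varepsilon_1\|_{H^\frac{5}{4}}^2\bigr)\|\mathbf{w}^\varepsilon\|_H^2\,dt.
\end{align*}
A pathwise Gronwall argument then yields $\|\mathbf{w}^\varepsilon(t)\|_H^2 \leq \|\mathbf{w}^\varepsilon(0)\|_H^2 \exp\!\bigl(-ct + \int_0^t\|\mathbf{u}^\varepsilon_1(s)\|_{H^\frac{5}{4}}^2\,ds\bigr)$. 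Taking expectation and applying Cauchy--Schwarz (or simply using that $\mathbf{w}^\varepsilon(0)$ is deterministic), the claim reduces to controlling $\mathrm{E}\exp\!\bigl(\int_0^t\|\Lambda^\frac{5}{4}\mathbf{u}^\varepsilon_1\|_{L^2}^2\,ds\bigr)$ against $e^{c't/2}$-type growth. This is exactly where Lemma \ref{lem2.1*} enters: the exponential moment estimate there gives $\mathrm{E}\exp\!\bigl(\int_0^t\|\Lambda^\frac{5}{4}\mathbf{u}^\varepsilon_1\|_{L^2}^2\,ds\bigr) \leq \exp(\|\mathbf{u}_1(0)\|_H^2) + \exp(C\varepsilon t\|G\|^2_{L_2(H;H)})$, and since we work in the small-$\varepsilon$ regime, the coefficient $C\varepsilon\|G\|^2$ of $t$ can be made strictly smaller than the decay rate $c$ coming from dissipation, producing a net negative exponent $-k := -(c - C\varepsilon\|G\|^2) < 0$.

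The main obstacle is the interplay between the random exponential growth factor $\int_0^t\|\mathbf{u}^\varepsilon_1\|_{H^\frac{5}{4}}^2\,ds$ and the deterministic decay $-ct$: one cannot simply pull the exponential of the random integral out of the expectation, so the argument must either use a stopping-time truncation (as in the proof of Lemma \ref{lem2.1*}) before applying Gronwall and taking expectation, or couple the Gronwall step with the exponential-moment bound through H\"older's inequality with a carefully chosen exponent so that the surviving rate stays negative. Making the bookkeeping of constants precise — ensuring the final $k = c - C\varepsilon\|G\|_{L_2(H;H)}^2$ is genuinely positive for the relevant range of $\varepsilon$ — is the only delicate point; everything else is a routine repetition of the energy estimates already established.
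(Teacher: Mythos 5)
Your proposal follows essentially the same route as the paper's proof: the noise cancels in the equation for the difference, the nonlinearity is bounded exactly as in \eqref{2.120}, and then Poincar\'e plus Gronwall plus the exponential-moment Lemma \ref{lem2.1*} with $\varepsilon$ small gives the positive rate $k=1-C\varepsilon\|G\|^2_{L_2(H;H)}$. The delicate point you flag is handled just as you outline — the deterministic decay $e^{-ct}$ factors out of the expectation and Lemma \ref{lem2.1*} (whose proof already contains the stopping-time truncation) directly bounds $\mathrm{E}\,{\rm exp}\bigl(\int_0^t\|\Lambda^{\frac{5}{4}}\mathbf{u}^\varepsilon_1\|_{L^2}^2\,ds\bigr)$ — so no additional H\"older or truncation step is required at this stage.
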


 \begin{proof}
 The proof is almost same with the continuous dependence argument. We first have
\begin{align}\label{3.7*}
&\frac{1}{2}d\|\mathbf{u}^\varepsilon_1-\mathbf{u}^\varepsilon_2\|_{H}^2+\|\Lambda^\frac{5}{4}(\mathbf{u}^\varepsilon_1-\mathbf{u}^\varepsilon_2)\|_{L^2}^2dt\leq -(B(\mathbf{u}_1-\mathbf{u}_2, \mathbf{u}_1), \mathbf{u}_1-\mathbf{u}_2)dt.
\end{align}
As \eqref{2.120}, we have
\begin{align}\label{3.8*}
(B(\mathbf{u}_1-\mathbf{u}_2, \mathbf{u}_1), \mathbf{u}_1-\mathbf{u}_2)&\leq \|\nabla(\mathbf{u}_1-\mathbf{u}_2)\|_{L^\frac{12}{5}}\|\mathbf{u}_1\|_{L^{12}}\|\mathbf{u}_1-\mathbf{u}_2\|_{H}\nonumber\\
&\leq \frac{1}{2}\|\Lambda^\frac{5}{4}(\mathbf{u}^\varepsilon_1-\mathbf{u}^\varepsilon_2)\|_{L^2}^2
+\frac{1}{2}\|\mathbf{u}_1-\mathbf{u}_2\|_{H}^2\|\mathbf{u}_1\|_{H^\frac{5}{4}}^2,
\end{align}
then, using \eqref{3.7*}, \eqref{3.8*}, we have
\begin{align*}
&\frac{1}{2}d\|\mathbf{u}^\varepsilon_1-\mathbf{u}^\varepsilon_2\|_{H}^2\leq -\frac{1}{2}\|\Lambda^\frac{5}{4}(\mathbf{u}^\varepsilon_1-\mathbf{u}^\varepsilon_2)\|_{L^2}^2dt +\frac{1}{2}\|\mathbf{u}_1-\mathbf{u}_2\|_{H}^2\|\mathbf{u}_1\|_{H^\frac{5}{4}}^2dt.
\end{align*}
The Poincar\'{e} inequality and the Gronwall lemma, Lemma \ref{lem2.1*} yield
\begin{align*}
\mathrm{E}\|\mathbf{u}^\varepsilon_1-\mathbf{u}^\varepsilon_2\|_{H}^2&\leq C\|\mathbf{u}_1(0)-\mathbf{u}_2(0)\|_{H}^2\mathrm{E}{\rm exp}\left(\int_{0}^{t} -\frac{1}{2}+\frac{1}{2}\|\mathbf{u}_1\|_{H^\frac{5}{4}}^2ds\right)\nonumber\\
&\leq C\|\mathbf{u}_1(0)-\mathbf{u}_2(0)\|_{H}^2{\rm exp}\left(-t\right)\left({\rm exp}\left(\|\mathbf{u}^\varepsilon_0\|_{H}^2\right)+{\rm exp}\left(C\varepsilon t\|G\|^2_{L_2(H; H)}\right)\right)\nonumber\\
&\leq C\|\mathbf{u}_1(0)-\mathbf{u}_2(0)\|_{H}^2{\rm exp}\left(-t\right)\nonumber\\&\quad +C\|\mathbf{u}_1(0)-\mathbf{u}_2(0)\|_{H}^2{\rm exp}\left(-t+C\varepsilon t\|G\|^2_{L_2(H; H)}\right).
\end{align*}
Let
$$k=1-C\varepsilon\|G\|^2_{L_2(H; H)},$$
as $\varepsilon$ being small, then $k$ is positive. We finish the proof.
 \end{proof}
With the exponential stability in hands, we could easily show the invariant measure is unique. Assume that $\nu$ is another invariant measure, applying the invariance and Lemma \ref{lem3.2}, we see
\begin{align}\label{2.46}
&\left|\int_{H}\phi(x) d\mu(x)-\int_{H}\phi(y) d\nu(y)\right|\nonumber\\ &=\left|\int_{H}\mathbf{P}_t\phi(x) d\mu(x)-\int_{H}\mathbf{P}_t\phi(y) d\nu(y)\right|\nonumber\\
&=\left|\int_{H}\int_{H}\mathbf{P}_t\phi(x) -\mathbf{P}_t\phi(y)  d\mu(x)d\nu(y)\right|\nonumber\\
&=\left|\int_{H}\int_{H}\mathrm{E}\phi(\mathbf{u}(t;x)) -\mathrm{E}\phi(\mathbf{u}(t;y)) d\mu(x)d\nu(y)\right|\nonumber\\
&\leq \|\phi\|_{Lip}\left|\int_{H}\int_{H}\mathrm{E}\|\mathbf{u}(t;x)-\mathbf{u}(t;y)\|_{H}d\mu(x)d\nu(y)\right|\nonumber\\
&\leq C\|\phi\|_{Lip}{\rm exp}\left(-kt\right)\left|\int_{H}\int_{H}\|x-y\|_{H}^2d\mu(x)d\nu(y)\right|.
\end{align}
As $t\rightarrow \infty$, the right hand side term of \eqref{2.46} goes to zero, which implies the invariant measure $ \mu$ is unique. Following \cite[Theorem 3.2.6]{Zabczyk}, it is ergodic.

\section{Large deviations of linear stochastic equations}

In this section, we establish the large deviations of the distribution of solutions to the linear stochastic equations
\begin{align}\label{e2.1}
d\mathbf{v}+A^\frac{5}{4} \mathbf{v}dt=\sqrt{\varepsilon}GdW,~~ \mathbf{v}(0)=0.
\end{align}
Under the condition of $G\in L_2(H; H^\frac{5}{4})$, we could deduce that equations \eqref{e2.1} admit a unqiue global strong pathwise solution $\mathbf{v}^\varepsilon\in L^p(\Omega; C([0,T]; H^\frac{5}{4})\cap L^2(0,T; H^\frac{5}{2}))$ uniformly in $\varepsilon$ for any $p\geq 2$. Since the equations are linear, we do not give more details on the well-posedness argument. We proceed to show that the distribution of the family of $\{\mathbf{v}^\varepsilon\}_{\varepsilon>0}$ satisfies the large deviations in space $C([0,T]; H^\frac{5}{4})\cap L^2(0,T; H^\frac{5}{2})$.

Considering the skeleton equations
\begin{align}\label{e2.2}
d\mathbf{v}+A^\frac{5}{4} \mathbf{v}dt=G\varphi dt,~~ \mathbf{v}(0)=0.
\end{align}
Note that for any $G\in L_2(H; H^\frac{5}{4})$, we could deduce that equations \eqref{e2.2} also admit a unique global strong solution $\mathbf{v}\in C([0,T]; H^\frac{5}{4})\cap L^2(0,T; H^\frac{5}{2})$. Actually, we could establish higher-order regularity estimate for $\mathbf{v}$: \begin{align}\label{4.3*}\mathbf{v}\in C([0,T]; H^\frac{5}{2})\cap L^2(0,T; H^\frac{15}{4}).\end{align}
Indeed, applying $\Lambda^\frac{5}{4}$ on both sides of \eqref{e2.2}, taking inner with $\Lambda^\frac{15}{4}\mathbf{v}$ and integrating by parts, we have
\begin{align*}
d\|\Lambda^\frac{5}{2}\mathbf{v}\|_{L^2}^2+2\|\Lambda^\frac{15}{4}\mathbf{v}\|_{L^2}^2&=2(\Lambda^\frac{5}{4} G\varphi, \Lambda^\frac{15}{4}\mathbf{v}) dt\nonumber\\
&\leq \|\Lambda^\frac{15}{4}\mathbf{v}\|_{L^2}^2dt+C\|G\|_{L_2(H; H^\frac{5}{4})}^2\|\varphi\|_{H}^2dt,
\end{align*}
since $\varphi\in L^2(0,T;H)$, the estimate follows after taking integral of $t$.

Next, we formulate the preliminaries of large deviations. For a Polish space $\mathcal{X}$, a function $I:\mathcal{X}\rightarrow [0,\infty]$ is called a rate function if $I$ is lower semicontinuous and is referred to as a good rate function if for each $M<\infty$, the level set $\{x\in \mathcal{X}:I(x)\leq M\}$ is compact. For completeness we now give the definition of large deviations and Laplace principles. For more backgrounds in this area of study we refer to \cite{Ellis}.

\begin{definition}[Large Deviations] The family $\left\{X^{\varepsilon} \right\}_{\varepsilon>0}$ satisfies the large deviations on $\mathcal{X}$ with rate function $I$ if the following two conditions hold:

i. lower bound: for every open set $\mathcal{O}\subset \mathcal{X}$,
\begin{equation*}
-\inf_{x\in \mathcal{O}} I(x) \leq \liminf_{\varepsilon \rightarrow 0}\varepsilon \log \mathrm{P}(X^{\varepsilon} \in \mathcal{O});
\end{equation*}

ii. upper bound: for every closed set $\mathcal{C} \subset \mathcal{X}$,
\begin{equation*}
\limsup_{\varepsilon \rightarrow 0} \varepsilon \log \mathrm{P}(X^{\varepsilon} \in \mathcal{C}) \leq -\inf_{x\in\mathcal{ C}}I(x).
\end{equation*}
\end{definition}

\begin{definition}[Laplace Principle]\label{def4.2} Let $I$ be a rate function on space $\mathcal{X}$. A family $\{X^{\varepsilon}\}_{\varepsilon>0}$ of $\mathcal{X}$-valued random processes is said to satisfy the Laplace principle on $\mathcal{X}$ with a rate function $I$ if for each real-valued, bounded and continuous function $f$, we have
\begin{eqnarray*}
\lim_{\varepsilon\rightarrow 0}\varepsilon\log \mathrm E\bigg\{{\rm exp}\bigg[-\frac{1}{\varepsilon}f(X^{\varepsilon})\bigg]\bigg\}=-\inf_{x\in \mathcal{X}}\{f(x)+I(x)\}.
\end{eqnarray*}
\end{definition}
Since the family $\{X^{\varepsilon}\}_{\varepsilon>0}$ is a Polish space valued random process, the Laplace principle and the large deviation principle are equivalent, see \cite[Theorem 1.2.3]{Ellis}. To apply the weak convergence approach, we will use the following theorem given in \cite{Dup} to show the Laplace principle, then the large deviations follows.
\begin{theorem}\cite[Theorem 6]{Dup} \label{the4.1} For Polish spaces $\mathcal{X},\mathcal{Y}$ and each $\varepsilon>0$, let $\mathcal{G}^{\varepsilon}:\mathcal{Y}\rightarrow \mathcal{X}$ be the solution mapping acting on the noise
for fixed initial conditions and define $X^{\varepsilon}:=\mathcal{G}^{\varepsilon}(\sqrt{\varepsilon}W)$ where $W$ is a Wiener process. If there is a measurable map $\mathcal{G}^{0}:\mathcal{Y}\rightarrow \mathcal{X}$ such that the following conditions hold:

$(1)$ For $M<\infty$, if $h_{\varepsilon}$ converges in distribution to $\varphi$ as $S_{M}$-valued random elements, then,
\begin{eqnarray*}
\mathcal{G}^{\varepsilon}\left(\sqrt{\varepsilon}W+\int_{0}^{\cdot}\varphi_{\varepsilon}(s)ds\right)\rightarrow \mathcal{G}^{0}\left(\int_{0}^{\cdot}\varphi ds\right)
\end{eqnarray*}
as $\varepsilon\rightarrow 0$ in distribution $\mathcal{X}$;

$(2)$ For every $M<\infty$, the set
\begin{eqnarray*}
K_{M}=\{X_{\varphi}:\varphi\in S_{M}\}
\end{eqnarray*}
is a compact subset of $\mathcal{X}$.
Then, the family $\{X^{\varepsilon}\}_{\varepsilon>0}$ satisfies the Laplace principle with the rate function
\begin{eqnarray*}
I(X)=\inf_{\left\{\varphi\in L^{2}(0,T;H):X=\mathcal{G}^{0}(\int_{0}^{\cdot}\varphi(s)ds)\right\}}\left\{\frac{1}{2}\int_{0}^{T}\|\varphi\|_{H}^{2}dt\right\}.
\end{eqnarray*}
\end{theorem}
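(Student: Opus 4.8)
The plan is to establish the Laplace principle directly — which by \cite[Theorem 1.2.3]{Ellis} is equivalent to the large deviations — using the Budhiraja--Dupuis variational representation for exponential functionals of Brownian motion \cite{Bud,Dup}. Recall that for every bounded continuous $f:\mathcal{X}\to\mathbb{R}$ one has
\begin{align*}
-\varepsilon\log\mathrm{E}\exp\!\left(-\frac{1}{\varepsilon}f(X^{\varepsilon})\right)=\inf_{v\in\mathcal{A}}\mathrm{E}\!\left[\frac{1}{2}\int_{0}^{T}\|v(s)\|_{H}^{2}\,ds+f\!\left(\mathcal{G}^{\varepsilon}\!\left(\sqrt{\varepsilon}W+\int_{0}^{\cdot}v(s)\,ds\right)\right)\right],
\end{align*}
so it suffices to prove that the right-hand side converges to $\inf_{x\in\mathcal{X}}\{f(x)+I(x)\}$ as $\varepsilon\to0$, and I will split this into the two inequalities $\limsup\le$ and $\liminf\ge$. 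As a preliminary, note that hypothesis $(2)$ already forces $I$ to be a good rate function: the level set $\{I\le M\}$ is closed by lower semicontinuity and, since any minimizing $\varphi$ for a point in it may be chosen in $S_{2M+1}$, it is contained in the compact set $K_{2M+1}$.

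For the bound $\limsup_{\varepsilon\to0}\big(-\varepsilon\log\mathrm{E}\exp(-f(X^{\varepsilon})/\varepsilon)\big)\le\inf_{x}\{f(x)+I(x)\}$, fix $\delta>0$, pick $x_{0}$ with $f(x_{0})+I(x_{0})\le\inf_{x}\{f(x)+I(x)\}+\delta$, and then a \emph{deterministic} control $\varphi\in L^{2}(0,T;H)$ with $\mathcal{G}^{0}(\int_{0}^{\cdot}\varphi\,ds)=x_{0}$ and $\frac12\int_{0}^{T}\|\varphi\|_{H}^{2}dt\le I(x_{0})+\delta$. Feeding this single $\varphi$ into the representation bounds the infimum from above by $\frac12\int_{0}^{T}\|\varphi\|_{H}^{2}dt+\mathrm{E}f(\mathcal{G}^{\varepsilon}(\sqrt{\varepsilon}W+\int_{0}^{\cdot}\varphi\,ds))$. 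Applying hypothesis $(1)$ with the constant sequence $h_{\varepsilon}\equiv\varphi\in S_{M}$ (any $M\ge 2(I(x_{0})+\delta)$) gives $\mathcal{G}^{\varepsilon}(\sqrt{\varepsilon}W+\int_{0}^{\cdot}\varphi\,ds)\to x_{0}$ in distribution, so boundedness and continuity of $f$ yield $\mathrm{E}f(\cdot)\to f(x_{0})$. Letting $\varepsilon\to0$ and then $\delta\to0$ finishes this half.

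For the bound $\liminf_{\varepsilon\to0}\big(-\varepsilon\log\mathrm{E}\exp(-f(X^{\varepsilon})/\varepsilon)\big)\ge\inf_{x}\{f(x)+I(x)\}$, for each $\varepsilon$ choose $v_{\varepsilon}\in\mathcal{A}$ achieving the infimum in the representation up to an additive $\varepsilon$. Since $f$ is bounded, the left-hand side of the representation lies in a fixed interval, whence $\sup_{\varepsilon}\mathrm{E}\int_{0}^{T}\|v_{\varepsilon}\|_{H}^{2}dt=:M<\infty$; by Chebyshev's inequality the laws of $v_{\varepsilon}$ then concentrate, up to an event of small probability, on the weakly compact set $S_{M}$, so after a localization one may assume $v_{\varepsilon}\in\mathcal{A}_{M}$ and extract a subsequence along which $v_{\varepsilon}\to\varphi$ in distribution as $S_{M}$-valued random elements. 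Hypothesis $(1)$ then gives $\mathcal{G}^{\varepsilon}(\sqrt{\varepsilon}W+\int_{0}^{\cdot}v_{\varepsilon}\,ds)\to\mathcal{G}^{0}(\int_{0}^{\cdot}\varphi\,ds)=X_{\varphi}$ in distribution; passing to a Skorokhod representation so that this and $v_{\varepsilon}\to\varphi$ hold almost surely, and using that $v\mapsto\int_{0}^{T}\|v\|_{H}^{2}dt$ is weakly lower semicontinuous while $f$ is bounded and continuous, Fatou's lemma gives
\begin{align*}
\liminf_{\varepsilon\to0}\Big(-\varepsilon\log\mathrm{E}\exp\!\big(-\frac{1}{\varepsilon}f(X^{\varepsilon})\big)\Big)\ge\mathrm{E}\!\left[\frac{1}{2}\int_{0}^{T}\|\varphi\|_{H}^{2}dt+f(X_{\varphi})\right]\ge\inf_{x\in\mathcal{X}}\{f(x)+I(x)\},
\end{align*}
the last step holding $\omega$ by $\omega$ because, by the very definition of $I$, $f(X_{\varphi(\omega)})+\frac12\int_{0}^{T}\|\varphi(\omega)\|_{H}^{2}dt\ge f(X_{\varphi(\omega)})+I(X_{\varphi(\omega)})$.

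The hard part will be the $\liminf$ half. Two points need care: first, the localization replacing the merely $L^{2}$-bounded near-optimal controls $v_{\varepsilon}$ by controls in the compact class $\mathcal{A}_{M}$ at arbitrarily small cost in the representation (a stopping-time truncation together with a control on the discarded exponential mass); and second, arranging the joint Skorokhod coupling of $(v_{\varepsilon},W)$ on a single probability space so that hypothesis $(1)$ is genuinely applicable and the weak lower semicontinuity of the quadratic cost survives the passage to the limit. The $\limsup$ half is, by comparison, immediate, requiring only the deterministic near-minimizer $\varphi$ and one application of $(1)$.
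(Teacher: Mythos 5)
The paper does not prove this statement at all: Theorem \ref{the4.1} is imported verbatim as a black box from \cite{Dup} (Budhiraja--Dupuis--Maroulas), so there is no in-paper proof to compare against. Your sketch is essentially a correct reconstruction of the proof in that cited reference: the variational representation of \cite{Bud,Dup} for $-\varepsilon\log\mathrm{E}\,{\rm exp}(-f(X^\varepsilon)/\varepsilon)$, the Laplace upper bound via a single deterministic near-minimizing control together with condition $(1)$, the lower bound via near-optimal adapted controls with uniformly bounded expected cost, a stopping-time truncation to land in $\mathcal{A}_M$ (at a cost controlled by $\|f\|_\infty\,\mathrm{P}(\tau_N<T)\leq C/N$, which is exactly the ``delicate localization'' you flag), tightness on the weakly compact $S_M$, condition $(1)$ plus weak lower semicontinuity of the quadratic cost, and the pointwise inequality from the definition of $I$; the goodness of $I$ does follow from condition $(2)$ alone, though your one-line closedness claim should be expanded (for $x_n\to x$ with $I(x_n)\leq M$ one has $x_n\in K_{2M+2\delta}$ for every $\delta>0$, hence $x\in K_{2M+2\delta}$ and $I(x)\leq M+\delta$ for all $\delta$). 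So the proposal is sound and follows the same route as the original source rather than offering an alternative.
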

In our setting, we choose the Polish spaces $\mathcal{Y}, \mathcal{X}$ to be $C([0,T]; H), C([0,T]; H^\frac{5}{4})\cap L^2(0,T; H^\frac{5}{2})$. In the followings, we will verify the conditions in Theorem \ref{the4.1}. First, denote by $\mathbf{v}=\mathcal{G}^{\varepsilon}(\sqrt{\varepsilon}W)$ be the solution of system \eqref{e2.1}. Let $\{\varphi_{\varepsilon}\}_{\varepsilon\in(0,1]}\subset \mathcal{A}_{M}$ be a family of random elements and denote by $\mathbf{v}^\varepsilon$ to be the solution of the following stochastic controlled equations
\begin{equation}\label{e3}
d\mathbf{v}^\varepsilon+A^\frac{5}{4} \mathbf{v}^\varepsilon dt=G\varphi_\varepsilon dt+\sqrt{\varepsilon}GdW,~~ \mathbf{v}^\varepsilon(0)=0.
\end{equation}
Owing to the uniqueness, we know $\mathbf{v}^\varepsilon=\mathcal{G}^{\varepsilon}\left(\sqrt{\varepsilon}W+\int_{0}^{\cdot}\varphi_{\varepsilon}(s)ds\right)$.

In order to obtain the compactness, we introduce the operator $\Gamma: L^2(0,T; H)\rightarrow C([0,T]; H^\frac{5}{4})$ by
\begin{align}\label{2.3*}\Gamma \varphi=\int_{0}^{t}G\varphi(s) ds,\end{align}
for any $\varphi\in L^2(0,T; H)$.

\begin{lemma}\label{lem2.1}  The operator $\Gamma$ is compact with respect to the topology of $C([0,T]; H^\frac{5}{4})$, that is, for any bound sequence $\varphi_n\in L^2(0,T; H)$ with $\varphi_n\rightharpoonup \varphi$ in $L^2(0,T; H)$, it holds $\Gamma \varphi_n\rightarrow\Gamma \varphi$ in $C([0,T]; H^\frac{5}{4})$.
\end{lemma}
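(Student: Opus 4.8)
The plan is to show that $\Gamma$ maps a weakly convergent sequence in $L^2(0,T;H)$ to a strongly convergent sequence in $C([0,T];H^{5/4})$ by combining an equicontinuity estimate (valid because $G$ smooths into $H^{5/4}$) with the Arzel\`a--Ascoli theorem, then identifying the limit via weak convergence. First I would record the pointwise bound: for $0\le s\le t\le T$,
\begin{align*}
\|\Gamma\varphi_n(t)-\Gamma\varphi_n(s)\|_{H^{5/4}}
=\left\|\int_s^t G\varphi_n(r)\,dr\right\|_{H^{5/4}}
\le \|G\|_{L_2(H;H^{5/4})}\int_s^t\|\varphi_n(r)\|_H\,dr
\le \|G\|_{L_2(H;H^{5/4})}\,|t-s|^{1/2}\,\|\varphi_n\|_{L^2(0,T;H)},
\end{align*}
using Cauchy--Schwarz in time. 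Since $\{\varphi_n\}$ is bounded in $L^2(0,T;H)$, say by $\sqrt{M}$, this gives a uniform modulus of continuity $\omega(\delta)=\|G\|_{L_2(H;H^{5/4})}\sqrt{M}\,\delta^{1/2}$ for the family $\{\Gamma\varphi_n\}$ in $C([0,T];H^{5/4})$, and in particular uniform boundedness in that space (take $s=0$, recalling $\Gamma\varphi_n(0)=0$). So the family is equicontinuous and uniformly bounded as maps $[0,T]\to H^{5/4}$.

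To invoke Arzel\`a--Ascoli I also need pointwise relative compactness in $H^{5/4}$, i.e.\ that $\{\Gamma\varphi_n(t)\}_n$ is precompact in $H^{5/4}$ for each fixed $t$. This is where the extra regularity of $G$ is used a second time: I would observe that $G$ actually maps $H$ into a space slightly more regular than $H^{5/4}$, or more robustly, that $G\in L_2(H;H^{5/4})$ means $G$ is a Hilbert--Schmidt (hence compact) operator, so $G$ sends the weakly convergent sequence $\{\varphi_n\}$-averages to something compact; concretely, for fixed $t$ the weak convergence $\varphi_n\rightharpoonup\varphi$ in $L^2(0,t;H)$ together with compactness of $G:H\to H^{5/4}$ lifted to $L^2$ yields $\int_0^t G\varphi_n\,ds\to\int_0^t G\varphi\,ds$ strongly in $H^{5/4}$. (Alternatively, one notes $\int_0^t G\varphi_n\,ds\rightharpoonup\int_0^t G\varphi\,ds$ weakly in $H^{5/2}$, since $G\in L_2(H;H^{5/4})$ composed with the time integral gains regularity; then the compact embedding $H^{5/2}\hookrightarrow H^{5/4}$ upgrades this to strong convergence in $H^{5/4}$ at each $t$.) Either way I get, for every fixed $t$,
\begin{align*}
\Gamma\varphi_n(t)\longrightarrow \Gamma\varphi(t)\quad\text{in }H^{5/4}.
\end{align*}

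With equicontinuity, uniform boundedness, and pointwise precompactness established, Arzel\`a--Ascoli tells me that $\{\Gamma\varphi_n\}$ is relatively compact in $C([0,T];H^{5/4})$, so every subsequence has a further subsequence converging uniformly to some limit; the pointwise convergence above forces that limit to be $\Gamma\varphi$. Since the limit is the same along every subsequence, the whole sequence converges: $\Gamma\varphi_n\to\Gamma\varphi$ in $C([0,T];H^{5/4})$, which is the claim. The main obstacle I anticipate is the pointwise-in-$t$ compactness step — upgrading weak convergence of the time-integrals to strong convergence in $H^{5/4}$ — which genuinely needs the hypothesis $G\in L_2(H;H^{5/4})$ (compactness/smoothing of $G$) rather than merely $G\in L_2(H;H)$; the equicontinuity estimate, by contrast, is routine once that mapping property is in hand.
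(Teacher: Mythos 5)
Your proof is correct, and it is in essence the same Arzel\`a--Ascoli argument as the paper's: both proofs get equicontinuity from the trivial Cauchy--Schwarz bound and spatial compactness in $H^{\frac{5}{4}}$ from the Hilbert--Schmidt hypothesis on $G$. The only real difference is the mechanism for the spatial compactness: the paper splits $\Gamma\varphi_n$ with projections $P_N$, $Q_N=I-P_N$ and uses that the tail $\|Q_NG\|_{L_2(H;H^{5/4})}$ is small (so $\{\Gamma\varphi_n(t)\}$ is totally bounded, uniformly in $t$ and $n$), and then identifies the limit through the weak convergence $\Gamma\varphi_n\rightharpoonup\Gamma\varphi$; you instead use that $G:H\to H^{\frac{5}{4}}$ is compact, commute it with the time integral, $\int_0^t G\varphi_n\,ds=G\bigl(\int_0^t\varphi_n\,ds\bigr)$, and since $\int_0^t\varphi_n\,ds\rightharpoonup\int_0^t\varphi\,ds$ in $H$ you obtain pointwise \emph{strong} convergence in $H^{\frac{5}{4}}$ directly, which both supplies the pointwise precompactness for Arzel\`a--Ascoli and identifies the limit. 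This is a clean variant; just state the commutation explicitly rather than saying the compactness of $G$ is ``lifted to $L^2$'' (the pointwise lift of a compact operator to $L^2(0,T;\cdot)$ is not compact; what is compact is the composition of the bounded map $\varphi\mapsto\int_0^t\varphi\,ds$ with $G$). Also drop the parenthetical alternative: the time integral gains no spatial regularity and $G\in L_2(H;H^{\frac{5}{4}})$ does not place $\int_0^tG\varphi_n\,ds$ in $H^{\frac{5}{2}}$, so that route does not work; your main argument, however, does not need it.
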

\begin{proof} Since the operator $G\in L_2(0,T; H^\frac{5}{4})$, we have the $\Gamma$ is a bounded linear operator from $L^2(0,T; H)$ into $C([0,T]; H^\frac{5}{4})$. Therefore, we could have $\Gamma \varphi_n\rightharpoonup\Gamma \varphi$ in $C([0,T]; H^\frac{5}{4})$ if $\varphi_n\rightharpoonup \varphi$ in $L^2(0,T; H)$. It is enough to show that the set $\{\Gamma\varphi_n\}_{n\geq 1}$ is pre-compact and equicontinuous in $H^{\frac{5}{4}}$.

Introducing the operator
$$Q_N=I-P_N,$$
where $P_N$ is the finite-dimensional mapping from $H^\frac{5}{4}$ into $H^N=span\{e_i, i=1,\cdots, N\}$ and the sequence $\{e_j\}_{j\geq 1}$  is the basis of $H^\frac{5}{4}$. Since the sequence $\varphi_n\in L^2(0,T;H)$ has uniform bound of $n$,
\begin{align}\label{2.4*}
\|Q_N\Gamma\varphi_n\|_{H^\frac{5}{4}}&=\left\|Q_N\int_{0}^{t}G\varphi_nds\right\|_{H^\frac{5}{4}}\leq \int_{0}^{t}\|Q_NG\varphi_n\|_{H^\frac{5}{4}}ds\nonumber\\
&\leq \|Q_NG\|_{H^\frac{5}{4}}\int_{0}^{t}\|\varphi_n\|_{H}ds\nonumber\\
&\leq T\|Q_NG\|_{H^\frac{5}{4}}\|\varphi_n\|_{L^2(0,T;H)}\nonumber\\
&\leq CT\|Q_NG\|_{H^\frac{5}{4}}.
\end{align}
Since the right hand side term of \eqref{2.4*} goes to zero, then for any $\delta>0$, there exists $N_0$ such that for all $N>N_0$
\begin{align}\label{2.5*}\|Q_N\Gamma\varphi_n\|_{H^\frac{5}{4}}\leq \delta.\end{align}
Moreover, $P_N\Gamma\varphi_n$ lies in a finite-dimensional space which is bounded, and hence it is pre-compact. Combining \eqref{2.5*}, we could infer for every $\delta>0$, the sequence $\Gamma\varphi_n$ has a finite open cover of radius $\delta$ in $H^\frac{5}{4}$, hence pre-compact. From the formulation \eqref{2.3*} and the uniformly bounded of $\varphi_n$, we have the equicontinuity of $\Gamma\varphi_n$ in $H^\frac{5}{4}$. Consequently, we deduce that  $\Gamma\varphi_n$ is pre-compact in $C([0,T]; H^\frac{5}{4})$. This completes the proof.
\end{proof}
With the property in hands, we show the condition (1) in Theorem \ref{the4.1}.

\begin{proposition}\label{pro4.2} For any fixed $M>0$, $\varphi_\varepsilon, \varphi\in \mathcal{A}_M$ with $\varphi_\varepsilon\rightharpoonup \varphi$ in $L^2(0,T;H)$. Then the solution $\mathbf{v}^\varepsilon$ of equations (\ref{e3}) converges in distribution in $C([0,T]; H^\frac{5}{4})\cap L^2(0,T; H^\frac{5}{2})$ to the solution $\mathbf{v}$ of equations (\ref{e2.2}) as $\varepsilon\rightarrow 0$, that is, the process
\begin{eqnarray*}
\mathcal G^{\varepsilon}\left(\sqrt{\varepsilon}W+\int_{0}^{\cdot}\varphi_{\varepsilon}(s)ds\right)\rightarrow\mathcal G^{0}\left(\int_{0}^{\cdot}\varphi ds\right)
\end{eqnarray*}
in distribution in $C([0,T]; H^\frac{5}{4})\cap L^2(0,T; H^\frac{5}{2})$ as $\varepsilon\rightarrow 0$, where the solution mapping $$\mathcal{G}^{0}: C([0,T];H)\rightarrow C([0,T]; H^\frac{5}{4})\cap L^2(0,T; H^\frac{5}{2})$$ is defined by $$\mathcal{G}^{0}(g)=\mathbf{v},$$ for $g=\int_{0}^{\cdot}\varphi(s)ds\in C([0,T];H)$; otherwise, setting $\mathcal{G}^{0}(g)=0$.
\end{proposition}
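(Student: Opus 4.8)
The plan is to verify condition (1) of Theorem \ref{the4.1} for the linear equations by a direct subtraction argument, exploiting the fact that \eqref{e3} is affine in the data: write $\mathbf{v}^\varepsilon = \mathbf{z}^\varepsilon + \mathbf{w}^\varepsilon$, where $\mathbf{z}^\varepsilon$ solves the linear stochastic equation $d\mathbf{z}^\varepsilon + A^\frac{5}{4}\mathbf{z}^\varepsilon dt = \sqrt{\varepsilon}G dW$ with $\mathbf{z}^\varepsilon(0)=0$, and $\mathbf{w}^\varepsilon$ solves the deterministic-in-data equation $d\mathbf{w}^\varepsilon + A^\frac{5}{4}\mathbf{w}^\varepsilon dt = G\varphi_\varepsilon dt$ with $\mathbf{w}^\varepsilon(0)=0$. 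Likewise $\mathbf{v}$ decomposes as $\mathbf{v} = \mathbf{w}$, the solution of \eqref{e2.2} with datum $\varphi$. Since the noise coefficient carries $\sqrt{\varepsilon}$, the first step is to show $\mathbf{z}^\varepsilon \to 0$ in $C([0,T];H^\frac{5}{4})\cap L^2(0,T;H^\frac{5}{2})$ in probability as $\varepsilon\to 0$; this is immediate from the uniform-in-$\varepsilon$ bound $\mathrm{E}(\sup_{[0,T]}\|\mathbf{z}^\varepsilon\|_{H^\frac{5}{4}}^2 + \int_0^T\|\mathbf{z}^\varepsilon\|_{H^\frac{5}{2}}^2 dt) \leq C\varepsilon\|G\|_{L_2(H;H^\frac{5}{4})}^2$, obtained exactly as in Lemma \ref{lem2.2*} using It\^o's formula on $\|\Lambda^\frac{5}{4}\mathbf{z}^\varepsilon\|_{L^2}^2$ and the BDG inequality, together with Chebyshev.

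The second step is to show $\mathbf{w}^\varepsilon \to \mathbf{w}$ in $C([0,T];H^\frac{5}{4})\cap L^2(0,T;H^\frac{5}{2})$ whenever $\varphi_\varepsilon \rightharpoonup \varphi$ weakly in $L^2(0,T;H)$. Here I would use the mild/Duhamel representation: $\mathbf{w}^\varepsilon(t) = \int_0^t e^{-(t-s)A^\frac{5}{4}} G\varphi_\varepsilon(s)\,ds$. Observe that this is precisely the kind of functional handled by the operator $\Gamma$ of Lemma \ref{lem2.1} modulated by the analytic semigroup; more directly, by linearity $\mathbf{w}^\varepsilon - \mathbf{w}$ solves $d(\mathbf{w}^\varepsilon-\mathbf{w}) + A^\frac{5}{4}(\mathbf{w}^\varepsilon-\mathbf{w})dt = G(\varphi_\varepsilon - \varphi)dt$ with zero initial datum, so it suffices to prove that the solution map $\psi \mapsto \int_0^\cdot e^{-(\cdot-s)A^\frac{5}{4}}G\psi(s)\,ds$ is sequentially weak-to-strong continuous from $S_{2M}$ (which contains $\varphi_\varepsilon-\varphi$ after rescaling, using $\|\varphi_\varepsilon-\varphi\|_{L^2(0,T;H)}^2 \leq 2M + 2M$) into $C([0,T];H^\frac{5}{4})\cap L^2(0,T;H^\frac{5}{2})$. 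Weak convergence in $L^2(0,T;H)$ gives pointwise-in-$t$ weak convergence in $H$ of $\int_0^t G\psi_\varepsilon\,ds$, and the regularizing effect of the semigroup (smoothing $H^\frac{5}{4}$ to $H^\frac{5}{2}$, plus the higher-order estimate \eqref{4.3*}) upgrades this to strong convergence via an Arzel\`a–Ascoli argument: the family $\{\mathbf{w}^\varepsilon\}$ is uniformly bounded in $C([0,T];H^\frac{5}{2})$ and equicontinuous in $H^\frac{5}{4}$ by \eqref{4.3*}, hence pre-compact in $C([0,T];H^\frac{5}{4})$, and any limit point must solve \eqref{e2.2} with datum $\varphi$ by passing to the weak limit in the variational formulation, so by uniqueness the whole family converges. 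The $L^2(0,T;H^\frac{5}{2})$ convergence then follows from the uniform $L^2(0,T;H^\frac{15}{4})$ bound and interpolation, or directly from the energy identity for the difference.

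Combining the two steps, $\mathbf{v}^\varepsilon = \mathbf{z}^\varepsilon + \mathbf{w}^\varepsilon \to 0 + \mathbf{w} = \mathbf{v}$ in probability, hence in distribution, in $C([0,T];H^\frac{5}{4})\cap L^2(0,T;H^\frac{5}{2})$, which is condition (1). I do not expect a serious obstacle in the linear setting — the affine structure makes the noise term and the control term decouple cleanly, and the hyperdissipative smoothing gives all the compactness for free. The one point requiring care is that $\varphi_\varepsilon$ converges only \emph{in distribution} as an $S_M$-valued random element, not deterministically; the standard remedy is to invoke the Skorokhod representation theorem to realize the convergence almost surely on an auxiliary probability space (as already done in the proof of Proposition \ref{pro3.1**}), run the deterministic argument above pathwise, and then transfer back the convergence in distribution. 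The definition of $\mathcal{G}^0$ on the complement of the range of $\psi\mapsto\int_0^\cdot\psi\,ds$ is irrelevant since all $g$ arising here are of the required form.
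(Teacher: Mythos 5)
Your proposal is correct, but it follows a genuinely different route from the paper. The paper works directly with the difference $\mathbf{v}^\varepsilon-\mathbf{v}$: it applies the It\^{o} formula to $\tfrac12\|\Lambda^{\frac54}(\mathbf{v}^\varepsilon-\mathbf{v})\|_{L^2}^2$, rewrites the control discrepancy as $G(\varphi_\varepsilon-\varphi)=\tfrac{d}{dt}(\Gamma\varphi_\varepsilon-\Gamma\varphi)$ and integrates by parts in time, so that every remaining term is controlled either by $\|\Gamma\varphi_\varepsilon-\Gamma\varphi\|_{C([0,T];H^{\frac54})}$ (which vanishes by the compactness of $\Gamma$, Lemma \ref{lem2.1}) or by explicit $\sqrt{\varepsilon}$-factors coming from the noise via BDG; this yields convergence of the expectations directly, hence convergence in probability and in distribution. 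You instead exploit the affine structure: split $\mathbf{v}^\varepsilon=\mathbf{z}^\varepsilon+\mathbf{w}^\varepsilon$, kill the stochastic convolution by the $O(\varepsilon)$ energy bound, and prove weak-to-strong continuity of the deterministic control-to-solution map by an Arzel\`a--Ascoli compactness-plus-uniqueness argument based on the higher regularity \eqref{4.3*} (uniform $C([0,T];H^{\frac52})\cap L^2(0,T;H^{\frac{15}{4}})$ bounds give equicontinuity in $H^{\frac54}$ through the equation and pointwise compactness through $H^{\frac52}\hookrightarrow H^{\frac54}$), with the $L^2(0,T;H^{\frac52})$ convergence recovered by interpolation. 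Both arguments are sound; yours has the additional merit that the deterministic continuity step simultaneously delivers the compactness of the level sets in Proposition \ref{pro2.2*}, so conditions (1) and (2) of Theorem \ref{the4.1} come from one lemma, whereas the paper proves Proposition \ref{pro2.2*} separately by repeating the $\Gamma$-trick; on the other hand, the paper's computation gives a quantitative rate in expectation and avoids invoking compact embeddings. One simplification of your last step: since your deterministic map is continuous from $(S_M,\text{weak})$, a Polish space, into $C([0,T];H^{\frac54})\cap L^2(0,T;H^{\frac52})$, the continuous mapping theorem applied to $\varphi_\varepsilon\Rightarrow\varphi$ plus a Slutsky-type argument for $\mathbf{z}^\varepsilon\to0$ in probability concludes without any Skorokhod representation; if you do use Skorokhod, note only that it must be applied to the pair so that the transferred convergences can be combined on one space.
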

\begin{proof} 

Applying the It\^{o} formula to $\frac{1}{2}\|\Lambda^\frac{5}{4}(\mathbf{v}^\varepsilon-\mathbf{v})\|_{L^2}^2$, we see
\begin{align}\label{2.7*}
&\frac{1}{2}d\|\Lambda^\frac{5}{4}(\mathbf{v}^\varepsilon-\mathbf{v})\|_{L^2}^2
+\|\Lambda^\frac{5}{2}(\mathbf{v}^\varepsilon-\mathbf{v})\|_{L^2}^2dt
\nonumber\\&=(\Lambda^\frac{5}{4}(G\varphi_\varepsilon-G\varphi), \Lambda^\frac{5}{4}(\mathbf{v}^\varepsilon-\mathbf{v}))dt
+\sqrt{\varepsilon}(\Lambda^\frac{5}{4}GdW,\Lambda^\frac{5}{4}(\mathbf{v}^\varepsilon-\mathbf{v}))
+\frac{1}{2}\varepsilon\|G\|^2_{L_2(H; H^\frac{5}{4})}dt.
\end{align}
We first deal with the control term
\begin{align}
&\left(\Lambda^\frac{5}{4}(G\varphi_\varepsilon-G\varphi), \Lambda^\frac{5}{4}(\mathbf{v}^\varepsilon-\mathbf{v})\right)=\left(\Lambda^\frac{5}{4}\frac{d}{dt}(\Gamma\varphi_\varepsilon-\Gamma\varphi), \Lambda^\frac{5}{4}(\mathbf{v}^\varepsilon-\mathbf{v})\right)\nonumber\\
&=\frac{d}{dt}\left(\Lambda^\frac{5}{4}(\Gamma\varphi_\varepsilon-\Gamma\varphi), \Lambda^\frac{5}{4}(\mathbf{v}^\varepsilon-\mathbf{v})\right)-\left(\Lambda^\frac{5}{4}(\Gamma\varphi_\varepsilon-\Gamma\varphi), \frac{d}{dt}\Lambda^\frac{5}{4}(\mathbf{v}^\varepsilon-\mathbf{v})\right)\nonumber\\
&=\frac{d}{dt}\left(\Lambda^\frac{5}{4}(\Gamma\varphi_\varepsilon-\Gamma\varphi), \Lambda^\frac{5}{4}(\mathbf{v}^\varepsilon-\mathbf{v})\right)\nonumber\\
&\quad-\left(\Lambda^\frac{5}{4}(\Gamma\varphi_\varepsilon-\Gamma\varphi), -\Lambda^\frac{5}{4}A^\frac{5}{4}(\mathbf{v}^\varepsilon-\mathbf{v})+\Lambda^\frac{5}{4}(G\varphi_\varepsilon-G\varphi)
+\sqrt{\varepsilon}\Lambda^\frac{5}{4}G\frac{dW}{dt}\right).
\end{align}

By the H\"{o}lder inequality, we obtain
\begin{align}
-\left(\Lambda^\frac{5}{4}(\Gamma\varphi_\varepsilon-\Gamma\varphi), -\Lambda^\frac{5}{4}A^\frac{5}{4}(\mathbf{v}^\varepsilon-\mathbf{v})\right)\leq \|\Gamma\varphi_\varepsilon-\Gamma\varphi\|_{H^\frac{5}{4}}\|\Lambda^\frac{5}{2}A^\frac{5}{4}(\mathbf{v}^\varepsilon-\mathbf{v})\|_{L^2},
\end{align}
and
\begin{align}\label{2.10*}
-\left(\Lambda^\frac{5}{4}(\Gamma\varphi_\varepsilon-\Gamma\varphi),\Lambda^\frac{5}{4}(G\varphi_\varepsilon-G\varphi)\right)\leq \|\Gamma\varphi_\varepsilon-\Gamma\varphi\|_{H^\frac{5}{4}}\|G\|_{L_2(H;H^\frac{5}{4})}\|\varphi_\varepsilon-\varphi\|_{H}.
\end{align}
Integrating of $t$, taking supremum and expectation, we have from \eqref{2.7*}-\eqref{2.10*}
\begin{align}\label{2.11*}
&\mathrm{E}\sup_{t\in [0,T]}\|\Lambda^\frac{5}{4}(\mathbf{v}^\varepsilon-\mathbf{v})\|_{L^2}^2
+\mathrm{E}\int_{0}^{T}\|\Lambda^\frac{5}{2}(\mathbf{v}^\varepsilon-\mathbf{v})\|_{L^2}^2dt\nonumber\\
&\leq \mathrm{E}\sup_{t\in [0,T]}\left|\left(\Lambda^\frac{5}{4}(\Gamma\varphi_\varepsilon-\Gamma\varphi), \Lambda^\frac{5}{4}(\mathbf{v}^\varepsilon-\mathbf{v})\right)\right|\nonumber\\
&\quad+C
\mathrm{E}\int_{0}^{T}\|\Lambda^\frac{5}{4}A^\frac{5}{4}(\mathbf{v}^\varepsilon-\mathbf{v})\|_{L^2}^2dt
\int_{0}^{T}\|\Gamma\varphi_\varepsilon-\Gamma\varphi\|_{H^\frac{5}{4}}^2dt\nonumber\\
&\quad+\mathrm{E}\int_{0}^{T}\|\Gamma\varphi_\varepsilon-\Gamma\varphi\|_{H^\frac{5}{4}}\|G\|_{L_2(H;H^\frac{5}{4})}\|\varphi_\varepsilon-\varphi\|_{H}dt\nonumber\\
&\quad+\mathrm{E}\sup_{t\in [0,T]}\left|\int_{0}^{t}\sqrt{\varepsilon}(\Lambda^\frac{5}{4}GdW,\Lambda^\frac{5}{4}(\Gamma\varphi_\varepsilon-\Gamma\varphi))\right|\nonumber\\
&\quad+\mathrm{E}\sup_{t\in [0,T]}\left|\int_{0}^{t}\sqrt{\varepsilon}(\Lambda^\frac{5}{4}GdW,\Lambda^\frac{5}{4}(\mathbf{v}^\varepsilon-\mathbf{v}))\right|\nonumber\\
&\quad+\frac{1}{2}\varepsilon T\|G\|^2_{L_2(H; H^\frac{5}{4})}.
\end{align}
For the first term in the right hand side of \eqref{2.11*}, we have
\begin{align}\label{4.12}
&\mathrm{E}\sup_{t\in [0,T]}\left|\left(\Lambda^\frac{5}{4}(\Gamma\varphi_\varepsilon-\Gamma\varphi), \Lambda^\frac{5}{4}(\mathbf{v}^\varepsilon-\mathbf{v})\right)\right|\nonumber\\
&\leq C\mathrm{E}\|\Gamma\varphi_\varepsilon-\Gamma\varphi\|^2_{C([0,T];H^\frac{5}{4})}+\frac{1}{2}\mathrm{E}\sup_{t\in [0,T]}\|\Lambda^\frac{5}{4}(\mathbf{v}^\varepsilon-\mathbf{v})\|_{L^2}^2.
\end{align}
And, we have
\begin{align}\label{4.13}
C\mathrm{E}\int_{0}^{T}\|\Gamma\varphi_\varepsilon-\Gamma\varphi\|_{H^\frac{5}{4}}^2dt\leq C\mathrm{E}\|\Gamma\varphi_\varepsilon-\Gamma\varphi\|^2_{C([0,T];H^\frac{5}{4})}.
\end{align}
Again, the H\"{o}lder inequality gives
\begin{align}
&\mathrm{E}\int_{0}^{T}\|\Gamma\varphi_\varepsilon-\Gamma\varphi\|_{H^\frac{5}{4}}\|G\|_{L_2(H;H^\frac{5}{4})}\|\varphi_\varepsilon-\varphi\|_{H}dt\nonumber\\
&\leq C\|G\|_{L_2(H;H^\frac{5}{4})}\mathrm{E}\|\Gamma\varphi_\varepsilon-\Gamma\varphi\|_{C([0,T];H^\frac{5}{4})}
\int_{0}^{T}\|\varphi_\varepsilon-\varphi\|^2_{H}dt.
\end{align}

For the martingale part,
\begin{align}
&\mathrm{E}\sup_{t\in [0,T]}\left|\int_{0}^{t}\sqrt{\varepsilon}(\Lambda^\frac{5}{4}GdW,\Lambda^\frac{5}{4}(\mathbf{v}^\varepsilon-\mathbf{v}))\right|\nonumber\\
&\leq \sqrt{\varepsilon}\mathrm{E}\left(\int_{0}^{T}\sum_{k\in \mathbb{Z}_0^3}(\Lambda^\frac{5}{4}Ge_k,\Lambda^\frac{5}{4}(\mathbf{v}^\varepsilon-\mathbf{v}))^2dt\right)^\frac{1}{2}\nonumber\\
&\leq \sqrt{\varepsilon}\mathrm{E}\left(\int_{0}^{T}\|\Lambda^\frac{5}{4}(\mathbf{v}^\varepsilon-\mathbf{v})
\|_{L^2}^2\|G\|_{L_2(H;H^\frac{5}{4})}^2dt\right)^\frac{1}{2}\nonumber\\
&\leq  \sqrt{\varepsilon}\|G\|_{L_2(H;H^\frac{5}{4})}\mathrm{E}\left(\int_{0}^{T}\|\Lambda^\frac{5}{4}(\mathbf{v}^\varepsilon-\mathbf{v})
\|_{L^2}^2dt\right)^\frac{1}{2}.
\end{align}
Similarly,
\begin{align}\label{2.16*}
&\mathrm{E}\sup_{t\in [0,T]}\left|\int_{0}^{t}\sqrt{\varepsilon}(\Lambda^\frac{5}{4}GdW,\Lambda^\frac{5}{4}(\Gamma\varphi_\varepsilon-\Gamma\varphi))\right|\nonumber\\
&\leq \sqrt{\varepsilon}\mathrm{E}\left(\int_{0}^{T}\sum_{k\in \mathbb{Z}_0^3}
(\Lambda^\frac{5}{4}Ge_k,\Lambda^\frac{5}{4}(\Gamma\varphi_\varepsilon-\Gamma\varphi))^2dt\right)^\frac{1}{2}\nonumber\\
&\leq  \sqrt{\varepsilon}\|G\|_{L_2(H;H^\frac{5}{4})}\mathrm{E}\left(\int_{0}^{T}\|\Lambda^\frac{5}{4}(\Gamma\varphi_\varepsilon-\Gamma\varphi)
\|_{L^2}^2dt\right)^\frac{1}{2}\nonumber\\
&\leq  C(T)\sqrt{\varepsilon}\|G\|_{L_2(H;H^\frac{5}{4})}
\mathrm{E}\left(\int_{0}^{T}\|G\|^2_{L_2(H;H^\frac{5}{4})}\|\varphi_\varepsilon-\varphi\|^2_{H}dt\right)^\frac{1}{2}\nonumber\\
&\leq C(M,T)\sqrt{\varepsilon}\|G\|^2_{L_2(H;H^\frac{5}{4})}.
\end{align}
Taking \eqref{4.3*} and \eqref{2.11*}-\eqref{2.16*} into account, we obtain
\begin{align}
&\mathrm{E}\sup_{t\in [0,T]}\|\Lambda^\frac{5}{4}(\mathbf{v}^\varepsilon-\mathbf{v})\|_{L^2}^2
+\mathrm{E}\int_{0}^{T}\|\Lambda^\frac{5}{2}(\mathbf{v}^\varepsilon-\mathbf{v})\|_{L^2}^2dt\nonumber\\
&\leq C\mathrm{E}\|\Gamma\varphi_\varepsilon-\Gamma\varphi\|^2_{C([0,T];H^\frac{5}{4})}
+C(M)\|G\|_{L_2(H;H^\frac{5}{4})}\mathrm{E}\|\Gamma\varphi_\varepsilon-\Gamma\varphi\|_{C([0,T];H^\frac{5}{4})}\nonumber\\
&\quad+\sqrt{\varepsilon}\|G\|_{L_2(H;H^\frac{5}{4})}\mathrm{E}\left(\int_{0}^{T}\|\Lambda^\frac{5}{4}(\mathbf{v}^\varepsilon-\mathbf{v})
\|_{L^2}^2dt\right)^\frac{1}{2}\nonumber\\
&\quad+C(M,T)\sqrt{\varepsilon}\|G\|^2_{L_2(H;H^\frac{5}{4})}+\frac{1}{2}\varepsilon T\|G\|^2_{L_2(H; H^\frac{5}{4})}.
\end{align}
Using Lemma \ref{lem2.1}, we conclude that as $\varepsilon\rightarrow 0$
\begin{align*}
\mathrm{E}\sup_{t\in [0,T]}\|\Lambda^\frac{5}{4}(\mathbf{v}^\varepsilon-\mathbf{v})\|_{L^2}^2
+\mathrm{E}\int_{0}^{T}\|\Lambda^\frac{5}{2}(\mathbf{v}^\varepsilon-\mathbf{v})\|_{L^2}^2dt\rightarrow 0,
\end{align*}
which implies $\mathbf{v}^\varepsilon\rightarrow \mathbf{v}$ in distribution in $C([0,T]; H^\frac{5}{4})\cap L^2(0,T; H^\frac{5}{2})$. We finish the proof.
\end{proof}
We next prove the convergence of solutions of \eqref{e2.2} with respect to $\varphi$ with $L^2(0,T; H)$-weak topology.
\begin{proposition}\label{pro2.2*} For every $M<\infty$, the set
\begin{eqnarray*}
K_{M}=\{\mathbf{v}_{\varphi}:\varphi\in S_{M}\}
\end{eqnarray*}
is a compact subset of $C([0,T]; H^\frac{5}{4})\cap L^2(0,T; H^\frac{5}{2})$.
\end{proposition}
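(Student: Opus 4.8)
The plan is to show that the control-to-solution map $\varphi\mapsto\mathbf{v}_\varphi$ is continuous from $S_M$, equipped with the weak $L^2(0,T;H)$-topology, into $C([0,T];H^\frac{5}{4})\cap L^2(0,T;H^\frac{5}{2})$ with its strong topology. Since $S_M$ with that weak topology is a compact metric space, $K_M$ is then the continuous image of a compact set, hence compact. As the weak topology on $S_M$ is metrizable, it suffices to argue sequentially: given $\varphi_n\in S_M$ with $\varphi_n\rightharpoonup\varphi$ in $L^2(0,T;H)$ (the limit $\varphi$ again lying in $S_M$, since the ball is convex and norm-closed, hence weakly closed), I would prove $\mathbf{v}_{\varphi_n}\to\mathbf{v}_\varphi$ in $C([0,T];H^\frac{5}{4})\cap L^2(0,T;H^\frac{5}{2})$.

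First I would record the uniform a priori bound. By the higher-order regularity estimate \eqref{4.3*} and its proof, for every $\varphi\in S_M$ the solution $\mathbf{v}_\varphi$ lies in $C([0,T];H^\frac{5}{2})\cap L^2(0,T;H^\frac{15}{4})$ with $\int_0^T\|\Lambda^\frac{15}{4}\mathbf{v}_\varphi\|_{L^2}^2\,dt\le C\|G\|_{L_2(H;H^\frac{5}{4})}^2 M$, a bound uniform over $S_M$. In particular $\mathbf{w}_n:=\mathbf{v}_{\varphi_n}-\mathbf{v}_\varphi$ satisfies $\int_0^T\|\Lambda^\frac{5}{4}A^\frac{5}{4}\mathbf{w}_n\|_{L^2}^2\,dt=\int_0^T\|\Lambda^\frac{15}{4}\mathbf{w}_n\|_{L^2}^2\,dt\le C(M,T)$ uniformly in $n$, and $\mathbf{w}_n$ solves the linear equation $d\mathbf{w}_n+A^\frac{5}{4}\mathbf{w}_n\,dt=G(\varphi_n-\varphi)\,dt$ with $\mathbf{w}_n(0)=0$.

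Next I would run the energy computation of the proof of Proposition \ref{pro4.2} verbatim with $\varepsilon=0$: applying $\Lambda^\frac{5}{4}$ and pairing with $\Lambda^\frac{5}{4}\mathbf{w}_n$ in $L^2$ gives $\frac12\frac{d}{dt}\|\Lambda^\frac{5}{4}\mathbf{w}_n\|_{L^2}^2+\|\Lambda^\frac{5}{2}\mathbf{w}_n\|_{L^2}^2=(\Lambda^\frac{5}{4}G(\varphi_n-\varphi),\Lambda^\frac{5}{4}\mathbf{w}_n)$. The main obstacle is that $\varphi_n-\varphi\to0$ only weakly, so the forcing term cannot be estimated directly; the device is exactly the operator $\Gamma$. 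Writing $G(\varphi_n-\varphi)=\frac{d}{dt}(\Gamma\varphi_n-\Gamma\varphi)$, integrating by parts in time, and substituting $\frac{d}{dt}\mathbf{w}_n=-A^\frac{5}{4}\mathbf{w}_n+G(\varphi_n-\varphi)$ transfers all time derivatives onto $\Gamma\varphi_n-\Gamma\varphi$, which by Lemma \ref{lem2.1} converges strongly to $0$ in $C([0,T];H^\frac{5}{4})$. Combining this with the uniform bound on $\int_0^T\|\Lambda^\frac{15}{4}\mathbf{w}_n\|_{L^2}^2\,dt$ from the previous step, with $\|\varphi_n-\varphi\|_{L^2(0,T;H)}\le 2\sqrt M$, and with the H\"older and Young inequalities, one absorbs the $\sup_{t}\|\Lambda^\frac{5}{4}\mathbf{w}_n\|_{L^2}^2$ contribution on the left and obtains, with $a_n:=\|\Gamma\varphi_n-\Gamma\varphi\|_{C([0,T];H^\frac{5}{4})}$,
$$\sup_{t\in[0,T]}\|\Lambda^\frac{5}{4}\mathbf{w}_n\|_{L^2}^2+\int_0^T\|\Lambda^\frac{5}{2}\mathbf{w}_n\|_{L^2}^2\,dt\le C(M,T)\,(a_n+a_n^2).$$
Since $a_n\to0$ by Lemma \ref{lem2.1}, this yields $\mathbf{w}_n\to0$ in $C([0,T];H^\frac{5}{4})\cap L^2(0,T;H^\frac{5}{2})$, establishing the continuity of $\varphi\mapsto\mathbf{v}_\varphi$ and hence the compactness of $K_M$. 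Apart from the time integration-by-parts with $\Gamma$, every step is a deterministic and simpler mirror of Proposition \ref{pro4.2}, so I do not expect further difficulties.
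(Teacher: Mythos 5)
Your proposal is correct and follows essentially the same route as the paper's proof: the same device of writing $G(\varphi_n-\varphi)=\frac{d}{dt}(\Gamma\varphi_n-\Gamma\varphi)$ and integrating by parts in time, the same use of Lemma \ref{lem2.1} to get $\|\Gamma\varphi_n-\Gamma\varphi\|_{C([0,T];H^{5/4})}\to0$, and the same reliance on the higher-order bound \eqref{4.3*} to control the $\Lambda^{\frac{15}{4}}$ term before absorbing the supremum. The only cosmetic difference is that you phrase compactness as continuity of $\varphi\mapsto\mathbf{v}_\varphi$ on the weakly compact ball $S_M$, while the paper extracts a weakly convergent subsequence directly.
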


\begin{proof} Denote by $\mathbf{v}_{\varphi_n}$ be the solutions to the deterministic controlled equations \eqref{e2.2} with controlled term $\int_0^tG\varphi_nds$, where the processes $\varphi_n\in S_M$. Since the set $S_M$ is a bound closed set, therefore there exists a subsequence of $\varphi_n$ still denoted by $\varphi_n$ converging to $\varphi$ weakly in $L^2(0,T; H)$. We next show that the solutions $\mathbf{v}_{\varphi_n}$ converge to $\mathbf{v}_\varphi$ in $C([0,T];H^\frac{5}{4})$. First,
\begin{align} \label{2.19*} d(\mathbf{v}_{\varphi_n}-\mathbf{v}_{\varphi})+A^\frac{5}{4}(\mathbf{v}_{\varphi_n}-\mathbf{v}_{\varphi})dt=G(\mathbf{v}_{\varphi_n}-\mathbf{v}_{\varphi})dt.
\end{align}
Applying $\Lambda^\frac{5}{4}$ on both sides of \eqref{2.19*}, then taking inner product with $\Lambda^\frac{5}{4}(\mathbf{v}_{\varphi_n}-\mathbf{v}_{\varphi})$, lead to
\begin{align}\label{2.20*}
d\|\Lambda^\frac{5}{4}(\mathbf{v}_{\varphi_n}-\mathbf{v}_{\varphi})\|_{L^2}^2+2\|\Lambda^\frac{5}{2}(\mathbf{v}_{\varphi_n}-\mathbf{v}_{\varphi})\|_{L^2}^2dt
=2(\Lambda^\frac{5}{4}G(\mathbf{v}_{\varphi_n}-\mathbf{v}_{\varphi}), \Lambda^\frac{5}{4}(\mathbf{v}_{\varphi_n}-\mathbf{v}_{\varphi}))dt.
\end{align}
Similar to the argument as that of in Proposition \ref{pro4.2}, we also have
\begin{align}
&\left(\Lambda^\frac{5}{4}(G\varphi_n-G\varphi), \Lambda^\frac{5}{4}(\mathbf{v}_{\varphi_n}-\mathbf{v}_\varphi)\right)=\left(\Lambda^\frac{5}{4}\frac{d}{dt}(\Gamma\varphi_n-\Gamma\varphi), \Lambda^\frac{5}{4}(\mathbf{v}_{\varphi_n}-\mathbf{v}_\varphi)\right)\nonumber\\
&=\frac{d}{dt}\left(\Lambda^\frac{5}{4}(\Gamma\varphi_n-\Gamma\varphi), \Lambda^\frac{5}{4}(\mathbf{v}_{\varphi_n}-\mathbf{v}_\varphi)\right)-\left(\Lambda^\frac{5}{4}(\Gamma\varphi_n-\Gamma\varphi), \frac{d}{dt}\Lambda^\frac{5}{4}(\mathbf{v}_{\varphi_n}-\mathbf{v}_\varphi)\right)\nonumber\\
&=\frac{d}{dt}\left(\Lambda^\frac{5}{4}(\Gamma\varphi_n-\Gamma\varphi),\Lambda^\frac{5}{4}(\mathbf{v}_{\varphi_n}-\mathbf{v}_\varphi)\right)\nonumber\\
&\quad-\left(\Lambda^\frac{5}{4}(\Gamma\varphi_n-\Gamma\varphi), -\Lambda^\frac{5}{4}A^\frac{5}{4}(\mathbf{v}_{\varphi_n}-\mathbf{v}_\varphi)+\Lambda^\frac{5}{4}(G\varphi_n-G\varphi)
\right).
\end{align}
As \eqref{4.12} and \eqref{4.13}, we have
\begin{align}
\left(\Lambda^\frac{5}{4}(\Gamma\varphi_n-\Gamma\varphi), -\Lambda^\frac{5}{4}A^\frac{5}{4}(\mathbf{v}_{\varphi_n}-\mathbf{v}_\varphi)\right)\leq \|\Gamma\varphi_n-\Gamma\varphi\|_{H^\frac{5}{4}}\|\Lambda^\frac{5}{4}A^\frac{5}{4}(\mathbf{v}_{\varphi_n}-\mathbf{v}_\varphi)\|_{L^2},
\end{align}
and
\begin{align}\label{2.23*}
(\Lambda^\frac{5}{4}(\Gamma\varphi_n-\Gamma\varphi),\Lambda^\frac{5}{4}(G\varphi_n-G\varphi))\leq \|\Gamma\varphi_n-\Gamma\varphi\|_{H^\frac{5}{4}}\|G\|_{L_2(H;H^\frac{5}{4})}\|\varphi_n-\varphi\|_{H}.
\end{align}
Integrating of $t$, taking supremum we have from \eqref{2.20*}-\eqref{2.23*}
\begin{align}\label{2.24*}
&\sup_{t\in [0,T]}\|\Lambda^\frac{5}{4}(\mathbf{v}_{\varphi_n}-\mathbf{v}_{\varphi})\|_{L^2}^2
+\int_{0}^{T}\|\Lambda^\frac{5}{2}(\mathbf{v}_{\varphi_n}-\mathbf{v}_{\varphi})\|_{L^2}^2dt\nonumber\\
&\leq \sup_{t\in [0,T]}\left(\Lambda^\frac{5}{4}(\Gamma\varphi_n-\Gamma\varphi),\Lambda^\frac{5}{4}(\mathbf{v}_{\varphi_n}-\mathbf{v}_\varphi)\right)\nonumber\\
&\quad+C\|\Gamma\varphi_n-\Gamma\varphi\|_{C([0,T];H^\frac{5}{4})}
\int_{0}^{T}\|\Lambda^\frac{5}{4}A^\frac{5}{4}(\mathbf{v}_{\varphi_n}-\mathbf{v}_\varphi)\|^2_{L^2}dt\nonumber\\
&\quad+C\|\Gamma\varphi_n-\Gamma\varphi\|_{C([0,T];H^\frac{5}{4})}\|G\|_{L_2(H;H^\frac{5}{4})}\int_{0}^{T}\|\varphi_n-\varphi\|_{H}dt\nonumber\\
&\leq C\|\Gamma\varphi_n-\Gamma\varphi\|_{C([0,T];H^\frac{5}{4})}^2+\frac{1}{2}\sup_{t\in [0,T]}\|\Lambda^\frac{5}{4}(\mathbf{v}_{\varphi_n}-\mathbf{v}_{\varphi})\|_{L^2}^2\nonumber\\
&\quad+C(M)\|\Gamma\varphi_n-\Gamma\varphi\|_{C([0,T];H^\frac{5}{4})}\|G\|_{L_2(H;H^\frac{5}{4})}.
\end{align}
The second term of right hand side of \eqref{2.24*} could be absorbed by the first term on the left hand side.  By Lemma \ref{lem2.1}, we know the right hand side terms converge to zero. We obtain the compactness. \end{proof}

Combining  Theorem \ref{the4.1}, Propositions \ref{pro4.2} and \ref{pro2.2*}, the solution of equations \eqref{e2.1} satisfies the large deviations in $C([0,T];H^\frac{5}{4})\cap L^2(0,T; H^\frac{5}{2})$ with good rate function
\begin{align*}
I(\mathbf{v})=\inf_{\left\{\varphi\in L^{2}(0,T;H):\mathbf{v}=\mathcal{G}^{0}(\int_{0}^{\cdot}\varphi(s)ds)\right\}}\left\{\frac{1}{2}\int_{0}^{T}\|\varphi\|_{H}^{2}dt\right\}.
\end{align*}
We complete the proof.

\section{Uniform large deviations}

In this section, we establish the uniform large deviations of the family of $\mathbf{u}^\varepsilon$ in $C([0,T];H)$. We begin with stating the definition of uniform large deviations.
\begin{definition}\label{def3.1} For each $x\in \mathcal{D}$, let $\{\mu^x_\varepsilon\}_{\varepsilon>0}$ be a family of probability measures on Banach space $\mathcal{E}$, we say the family $\{\mu^x_\varepsilon\}_{\varepsilon>0}$ satisfies the uniform large deviation principle with speed $\varepsilon$ and with good rate function $I^x: \mathcal{E}\rightarrow \infty$, if

i. for any $s\geq 0$, $\delta>0$ and $\gamma>0$, there exists $\mathcal{\varepsilon}_0$ such that for all $\varepsilon\leq \varepsilon_0$
$$\inf_{x\in \mathcal{D}}\left(\mu^x_\varepsilon(B_\mathcal{E}(h, \delta))-{\rm exp}\left(-\frac{I^x(h)+\gamma}{\varepsilon}\right)\right)\geq 0,$$
for any $h\in \mathcal{H}^x(s)$, where $\mathcal{H}^x(s):=\{h\in \mathcal{E}: I^x(h)\leq s\}$ and $B_\mathcal{E}(h, \delta):=\{l\in \mathcal{E}: \|l-h\|_{\mathcal{E}}<\delta\}$;

ii. for any $s\geq 0$, $\delta>0$ and $\gamma>0$, there exists $\mathcal{\varepsilon}_0$ such that for all $\varepsilon\leq \varepsilon_0$
$$\sup_{x\in \mathcal{D}}\mu^x_\varepsilon(B^c_\mathcal{E}(\mathcal{H}^x(s), \delta))\leq {\rm exp}\left(-\frac{s-\gamma}{\varepsilon}\right),$$
where $B^c_\mathcal{E}(\mathcal{H}^x(s), \delta)=\{h\in \mathcal{E}: dist_\mathcal{E}(h, \mathcal{H}^x(s))\geq \delta\}.$
\end{definition}

The proof of uniform large deviations is based on the following uniform contraction principle given by \cite[Theorem 3.3]{cp}.
\begin{proposition}\label{pro3.1} The family of composite measures $\mu^\varepsilon=\pi^\varepsilon\circ \mathfrak{M}$ satisfies the uniform large deviation principle in $\mathcal{E}$ with rate function $I$, with respect to $x$ in non-empty set $\mathcal{D}$, where
$$I(\varphi)=\{J(\psi):~ \psi\in F, ~\varphi\in \mathfrak{M}\psi\},$$
if the followings hold:

i. the family of measures $\pi^\varepsilon$ satisfies the large deviation principle in $F$ with rate function $J$;

ii. the family of measures $\pi^\varepsilon$ is exponential tight in $E$, where $E$ is a Banach space with the continuous embedding $F\subset E$, that is,
for each $s>0$, there exists $R_s$ and $\varepsilon_0>0$ such that
$$\pi^\varepsilon(B_E(R_s)\cap F)\geq 1-{\rm exp}\left(\frac{s}{\varepsilon}\right),$$
for any $\varepsilon\leq \varepsilon_0$;

iii. the map $\mathfrak{M}:F\rightarrow \mathcal{E}$ is Lipschitz continuous on the ball of $E$, uniformly in $x\in \mathcal{D}$.
\end{proposition}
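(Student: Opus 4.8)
I would prove Proposition~\ref{pro3.1}, the uniform contraction principle, by transferring the large-deviation estimates for $\pi^\varepsilon$ through the solution map $\mathfrak{M}$, which carries the dependence on $x\in\mathcal{D}$. The guiding observation is that $\pi^\varepsilon$ itself does not depend on $x$, so its large-deviation bounds are automatically $x$-uniform, and the whole task is to push them through $\mathfrak{M}$ without destroying this uniformity. Write $\mu^x_\varepsilon=\pi^\varepsilon\circ\mathfrak{M}$, $I^x(\varphi)=\inf\{J(\psi):\psi\in F,\ \varphi=\mathfrak{M}\psi\}$ (with $\inf\emptyset=+\infty$), $\mathcal{H}^x(s)=\{\varphi:I^x(\varphi)\le s\}$, and let $\iota$ be the norm of the embedding $F\hookrightarrow E$. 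As a preliminary I would verify that $I^x$ is a good rate function, uniformly in $x$: by goodness of $J$ the level set $\{J\le s\}$ is $F$-compact, hence $E$-bounded, and by the Lipschitz hypothesis $\mathfrak{M}$ is $F$-to-$\mathcal{E}$ continuous on each $E$-ball, so $\mathcal{H}^x(s)=\mathfrak{M}(\{J\le s\})$ is $\mathcal{E}$-compact.

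For the lower bound in Definition~\ref{def3.1}, I would proceed as follows. Fix $s,\delta,\gamma>0$, $x\in\mathcal{D}$ and $h\in\mathcal{H}^x(s)$, and pick $\psi\in F$ with $\mathfrak{M}\psi=h$ and $J(\psi)\le I^x(h)+\gamma/2$; then $\psi$ lies in the $F$-compact set $K=\{J\le s+\gamma/2\}\subset B_E(R)$ with $R=R(s,\gamma)$. Let $L=L(s,\gamma)$ be the $x$-uniform Lipschitz constant of $\mathfrak{M}$ on $B_E(R+1)$, and fix $\delta'=\delta'(s,\gamma,\delta)>0$ with $\iota\delta'<1$ and $L\iota\delta'<\delta$. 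Then for $\psi'\in B_F(\psi,\delta')$ one has $\psi'\in B_E(R+1)$ and $\|\mathfrak{M}\psi'-h\|_{\mathcal{E}}\le L\iota\|\psi'-\psi\|_F<\delta$, whence $B_F(\psi,\delta')\subset\mathfrak{M}^{-1}(B_{\mathcal{E}}(h,\delta))$ and $\mu^x_\varepsilon(B_{\mathcal{E}}(h,\delta))\ge\pi^\varepsilon(B_F(\psi,\delta'))$. Since $\delta'$ and $K$ depend only on $(s,\gamma,\delta)$, it then suffices to establish the $K$-uniform lower bound $\pi^\varepsilon(B_F(\psi,\delta'))\ge\exp(-(J(\psi)+\gamma/2)/\varepsilon)$ for all $\psi\in K$ and all $\varepsilon$ below a threshold $\varepsilon_0(s,\gamma,\delta)$. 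This I would obtain by contradiction: from a hypothetical failing sequence $(\varepsilon_n\downarrow0,\ \psi_n\in K)$ extract an $F$-limit point $\psi_*\in K$, note that $B_F(\psi_*,\delta'/2)\subset B_F(\psi_n,\delta')$ for $n$ large, and combine the large-deviation lower bound for $\pi^\varepsilon$ on the open ball $B_F(\psi_*,\delta'/2)$ with the lower semicontinuity of $J$ to reach the absurdity $-J(\psi_*)\le-J(\psi_*)-\gamma/2$. Chaining the two estimates and using $J(\psi)\le I^x(h)+\gamma/2$ gives $\mu^x_\varepsilon(B_{\mathcal{E}}(h,\delta))\ge\exp(-(I^x(h)+\gamma)/\varepsilon)$ for $\varepsilon\le\varepsilon_0$, uniformly in $x$ and $h\in\mathcal{H}^x(s)$.

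For the upper bound in Definition~\ref{def3.1}, fix $s,\delta,\gamma>0$ (without loss $\gamma<s$), put $K_0=\{J\le s-\gamma/2\}\subset B_E(R_0)$, and let $L_0$ be the $x$-uniform Lipschitz constant of $\mathfrak{M}$ on $B_E(R_0+1)$. For $\psi\in K_0$ one has $I^x(\mathfrak{M}\psi)\le J(\psi)<s$, hence $\mathfrak{M}\psi\in\mathcal{H}^x(s)$; so, with $\rho=\min(1,\delta/L_0)$ ($x$-uniform), any $\psi'$ with $\mathrm{dist}_E(\psi',K_0)<\rho$ satisfies $\mathrm{dist}_{\mathcal{E}}(\mathfrak{M}\psi',\mathcal{H}^x(s))<\delta$. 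Consequently $\mathfrak{M}^{-1}(B^c_{\mathcal{E}}(\mathcal{H}^x(s),\delta))\subset\mathcal{N}^c$, where $\mathcal{N}=\{\psi'\in F:\mathrm{dist}_E(\psi',K_0)<\rho\}$ is $F$-open and $\inf_{\mathcal{N}^c}J\ge s-\gamma/2$ (because $\mathcal{N}^c\cap K_0=\emptyset$). Using the exponential tightness to fix $R_s$ with $\pi^\varepsilon(F\setminus\bar B_E(R_s))\le e^{-s/\varepsilon}$ for small $\varepsilon$, the set $\mathcal{N}^c\cap\bar B_E(R_s)$ is $F$-closed, so the large-deviation upper bound for $\pi^\varepsilon$ gives $\limsup_{\varepsilon\to0}\varepsilon\log\pi^\varepsilon(\mathcal{N}^c\cap\bar B_E(R_s))\le-(s-\gamma/2)$; hence, after routine bookkeeping of constants,
\[
\mu^x_\varepsilon(B^c_{\mathcal{E}}(\mathcal{H}^x(s),\delta))\le\pi^\varepsilon(\mathcal{N}^c)\le\pi^\varepsilon(\mathcal{N}^c\cap\bar B_E(R_s))+e^{-s/\varepsilon}\le e^{-(s-\gamma)/\varepsilon}
\]
for all sufficiently small $\varepsilon$, uniformly in $x\in\mathcal{D}$.

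The analysis of the stochastic equation is not involved at this stage; the one genuine difficulty is the \emph{uniformity}, i.e.\ choosing $\varepsilon_0$ independently of both $h\in\mathcal{H}^x(s)$ and $x\in\mathcal{D}$. This is precisely where the Lipschitz hypothesis is used: it provides, on each $E$-ball, a Lipschitz constant of $\mathfrak{M}$ that does not depend on $x$, so the radii $\delta'$ and $\rho$ of the preimage balls and neighbourhoods depend only on $(s,\gamma,\delta)$; the goodness of $J$ then makes its level sets $F$-compact, which is what upgrades the pointwise large-deviation lower bound to the $K$-uniform version above and, together with the exponential tightness, allows the non-compact tail to be discarded in the upper estimate.
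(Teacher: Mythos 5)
Your proof is correct, but note that the paper itself does not prove Proposition \ref{pro3.1} at all: it is quoted verbatim as \cite[Theorem 3.3]{cp}, so there is no in-paper argument to compare against. What you have written is essentially a self-contained reconstruction of the standard proof of that cited uniform contraction principle, and it hangs together: the key points — that $\pi^\varepsilon$ carries no $x$-dependence, that the $x$-uniform Lipschitz constant on $E$-balls converts $F$-balls around a representative $\psi$ (resp.\ $E$-neighbourhoods of the level set $K_0=\{J\le s-\gamma/2\}$) into subsets of $B_{\mathcal{E}}(h,\delta)$ (resp.\ of the $\delta$-neighbourhood of $\mathcal{H}^x(s)$), that compactness of the $J$-level sets upgrades the pointwise ball lower bound for $\pi^\varepsilon$ to one uniform over centers via the contradiction/subsequence argument, and that exponential tightness in $E$ lets you restrict the upper bound to the $F$-closed set $\mathcal{N}^c\cap \bar B_E(R_s)$ and absorb the tail — are exactly the mechanisms behind the cited result. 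You also correctly read the statement with an infimum in the rate function and with the (obviously intended) sign $1-\exp(-s/\varepsilon)$ in hypothesis ii. Two spots are stated without justification but are routine: the identification $\mathcal{H}^x(s)=\mathfrak{M}(\{J\le s\})$ (hence goodness of $I^x$) requires attaining the infimum, i.e.\ extracting an $F$-convergent minimizing sequence from a $J$-level set and using lower semicontinuity of $J$ together with continuity of $\mathfrak{M}$ on the corresponding $E$-ball; and the final ``bookkeeping'' step should record that the $\limsup$ upper bound only yields $\pi^\varepsilon(\mathcal{N}^c\cap\bar B_E(R_s))\le \exp(-(s-3\gamma/4)/\varepsilon)$ for $\varepsilon$ below a threshold, after which adding the tail $\exp(-s/\varepsilon)$ and absorbing the factor $2$ gives $\exp(-(s-\gamma)/\varepsilon)$. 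With those lines filled in, the argument is a complete and $x$-uniform proof of the proposition.
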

\begin{remark} The uniform large deviation principle usually follows from the Lipschitz continuity of map $\mathfrak{M}$. Generally, for the stochastic partial differential equations, we fail to show the global Lipschitz continuity due to the nonlinearity construction. Therefore, the exponential tight will be used for compensating the drawbacks. Intuitively, the exponential tight implies the family of solutions $\mathbf{u}^\varepsilon$ located in a ball of $E$ with any arbitrary large probability, which means the Lipschitz continuous of map $\mathfrak{M}$ could be considered to be global with probability "1" as $\varepsilon\rightarrow 0$.
\end{remark}

Inspired by Proposition \ref{pro3.1}, we decompose the original equations \eqref{e1*} into two equations: one of is a linear stochastic differential equations \eqref{e2.1}, while the another is the following random Navier-Stokes equations:
\begin{eqnarray}
\left\{\begin{array}{ll}
\!\!\!d\widetilde{\mathbf{u}}+A^\frac{5}{4}\widetilde{\mathbf{u}}dt+B(\widetilde{\mathbf{u}}+\mathbf{v},\widetilde{\mathbf{u}}+\mathbf{v})dt=0,\\
\!\!\!\widetilde{\mathbf{u}}|_{t=0}=\mathbf{u}_0.\\
\end{array}\right.
\end{eqnarray}
Denote by $\widetilde{\mathbf{u}}$ be the solution of above equations, and define the mapping $\mathfrak{M}: \mathbf{v}\in L^4(0,T; H^\frac{5}{4})\rightarrow \widetilde{\mathbf{u}}\in C([0,T]; H)$, note that ${\rm I}+\mathfrak{M}$ is a mapping from $\mathbf{v}$ into $\mathbf{u}^\varepsilon$, where $\mathbf{u}^\varepsilon$ is the solution of equations \eqref{e1*}.

In the previous section, we already established the large deviations of the family of $\{\mathbf{v}^\varepsilon\}_{\varepsilon>0}$ in space $C([0,T]; H^\frac{5}{4})\cap L^2(0,T; H^\frac{5}{2})$. According to Proposition \ref{pro3.1}, it remains to show conditions ii and iii. In our setting, we choose $F=C([0,T]; H^\frac{5}{4})$, $E=L^4(0,T; H^\frac{5}{4})$ and $\mathcal{E}=C([0,T]; H)$.

\begin{lemma}\label{lem3.3} For any $R>0$ and $\mathbf{u}_0\in B_R(H)$, the mapping $\mathfrak{M}$ is Lipschitz continuous on a ball $B_{L^4(0,T;H^\frac{5}{4})}(R)$.
\end{lemma}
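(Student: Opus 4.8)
The plan is a standard energy--Gronwall argument carried out at the $L^2$-level, preceded by a uniform a~priori bound on the image of $\mathfrak{M}$. Fix $\mathbf{u}_0\in B_R(H)$ and $\mathbf{v}_1,\mathbf{v}_2\in B_{L^4(0,T;H^\frac{5}{4})}(R)$, and write $\widetilde{\mathbf{u}}_i=\mathfrak{M}(\mathbf{v}_i)$, $\mathbf{w}_i=\widetilde{\mathbf{u}}_i+\mathbf{v}_i$ for $i=1,2$. First I would establish that $\widetilde{\mathbf{u}}_i$ is bounded in $C([0,T];H)\cap L^2(0,T;H^\frac{5}{4})$ by a constant depending only on $R$ and $T$. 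Testing the equation for $\widetilde{\mathbf{u}}_i$ against $\widetilde{\mathbf{u}}_i$ and using that $\mathbf{w}_i$ is divergence free, \eqref{2.1} gives $(B(\mathbf{w}_i,\mathbf{w}_i),\widetilde{\mathbf{u}}_i)=b(\mathbf{w}_i,\mathbf{v}_i,\widetilde{\mathbf{u}}_i)=-b(\mathbf{w}_i,\widetilde{\mathbf{u}}_i,\mathbf{v}_i)$, which by H\"older and the embeddings $H^\frac{5}{4}\hookrightarrow L^{12}$, $H^\frac{5}{4}\hookrightarrow W^{1,\frac{12}{5}}$ already exploited in \eqref{2.120} is bounded by $C\|\mathbf{w}_i\|_H\|\widetilde{\mathbf{u}}_i\|_{H^\frac{5}{4}}\|\mathbf{v}_i\|_{H^\frac{5}{4}}$. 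After absorbing a fraction of $\|\Lambda^\frac{5}{4}\widetilde{\mathbf{u}}_i\|_{L^2}^2$ and using $\|\mathbf{w}_i\|_H^2\le 2\|\widetilde{\mathbf{u}}_i\|_H^2+2\|\mathbf{v}_i\|_{H^\frac{5}{4}}^2$, one arrives at
\[
\frac{d}{dt}\|\widetilde{\mathbf{u}}_i\|_H^2+\|\Lambda^\frac{5}{4}\widetilde{\mathbf{u}}_i\|_{L^2}^2\le C\|\mathbf{v}_i\|_{H^\frac{5}{4}}^2\|\widetilde{\mathbf{u}}_i\|_H^2+C\|\mathbf{v}_i\|_{H^\frac{5}{4}}^4 .
\]
Since $\mathbf{v}_i\in L^4(0,T;H^\frac{5}{4})$ makes both $t\mapsto\|\mathbf{v}_i\|_{H^\frac{5}{4}}^2$ and $t\mapsto\|\mathbf{v}_i\|_{H^\frac{5}{4}}^4$ integrable with norms controlled by $R$ and $T$, Gronwall's lemma yields $\|\widetilde{\mathbf{u}}_i\|_{C([0,T];H)}^2+\|\widetilde{\mathbf{u}}_i\|_{L^2(0,T;H^\frac{5}{4})}^2\le C(R,T)$, hence also $\|\mathbf{w}_i\|_{L^2(0,T;H^\frac{5}{4})}\le C(R,T)$, with all constants independent of $\mathbf{u}_0\in B_R(H)$.

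Next I would do the difference estimate. Put $\mathbf{z}=\widetilde{\mathbf{u}}_1-\widetilde{\mathbf{u}}_2$ and $\boldsymbol{\eta}=\mathbf{v}_1-\mathbf{v}_2$, so $\mathbf{z}(0)=0$ and $d\mathbf{z}+A^\frac{5}{4}\mathbf{z}\,dt+(B(\mathbf{w}_1,\mathbf{w}_1)-B(\mathbf{w}_2,\mathbf{w}_2))\,dt=0$. Writing $\mathbf{w}_1-\mathbf{w}_2=\mathbf{z}+\boldsymbol{\eta}$, testing against $\mathbf{z}$, and discarding the vanishing term $(B(\mathbf{w}_2,\mathbf{z}),\mathbf{z})=0$, the nonlinear contribution reduces to $b(\mathbf{z},\mathbf{z},\mathbf{w}_1)-b(\boldsymbol{\eta},\mathbf{w}_1,\mathbf{z})-b(\mathbf{w}_2,\boldsymbol{\eta},\mathbf{z})$. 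Each of these is handled exactly as in \eqref{2.120}: one derivative goes into $L^{12/5}$, one factor into $L^{12}$ and one into $L^2$, which together with Young's inequality and the Poincar\'e inequality produces
\[
\frac{d}{dt}\|\mathbf{z}\|_H^2+\tfrac12\|\Lambda^\frac{5}{4}\mathbf{z}\|_{L^2}^2\le C\|\mathbf{w}_1\|_{H^\frac{5}{4}}^2\|\mathbf{z}\|_H^2+C\big(1+\|\mathbf{w}_2\|_H^2\big)\|\boldsymbol{\eta}\|_{H^\frac{5}{4}}^2 ,
\]
where the term $b(\mathbf{z},\mathbf{z},\mathbf{w}_1)$ contributes the first summand on the right, $b(\boldsymbol{\eta},\mathbf{w}_1,\mathbf{z})$ contributes $C\|\mathbf{w}_1\|_{H^\frac{5}{4}}^2\|\mathbf{z}\|_H^2+C\|\boldsymbol{\eta}\|_{H^\frac{5}{4}}^2$, and $b(\mathbf{w}_2,\boldsymbol{\eta},\mathbf{z})$ contributes $C\|\mathbf{w}_2\|_H^2\|\boldsymbol{\eta}\|_{H^\frac{5}{4}}^2$ after absorbing $\tfrac18\|\Lambda^\frac{5}{4}\mathbf{z}\|_{L^2}^2$.

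Finally I would conclude by Gronwall. By Step~1 the coefficient $t\mapsto\|\mathbf{w}_1\|_{H^\frac{5}{4}}^2$ is integrable on $(0,T)$ with integral $\le C(R,T)$, so $\mathbf{z}(0)=0$ gives $\|\mathbf{z}\|_{C([0,T];H)}^2\le C(R,T)\int_0^T(1+\|\mathbf{w}_2\|_H^2)\|\boldsymbol{\eta}\|_{H^\frac{5}{4}}^2\,dt$. Since $\|\mathbf{w}_2(t)\|_H^2\le 2\|\widetilde{\mathbf{u}}_2\|_{C([0,T];H)}^2+2\|\mathbf{v}_2(t)\|_{H^\frac{5}{4}}^2$ with $\|\widetilde{\mathbf{u}}_2\|_{C([0,T];H)}\le C(R,T)$, H\"older in time yields
\[
\int_0^T\big(1+\|\mathbf{w}_2\|_H^2\big)\|\boldsymbol{\eta}\|_{H^\frac{5}{4}}^2\,dt\le C(R,T)\|\boldsymbol{\eta}\|_{L^2(0,T;H^\frac{5}{4})}^2+C\|\mathbf{v}_2\|_{L^4(0,T;H^\frac{5}{4})}^2\|\boldsymbol{\eta}\|_{L^4(0,T;H^\frac{5}{4})}^2\le C(R,T)\|\boldsymbol{\eta}\|_{L^4(0,T;H^\frac{5}{4})}^2 .
\]
Therefore $\|\mathfrak{M}(\mathbf{v}_1)-\mathfrak{M}(\mathbf{v}_2)\|_{C([0,T];H)}\le L(R,T)\,\|\mathbf{v}_1-\mathbf{v}_2\|_{L^4(0,T;H^\frac{5}{4})}$ with $L(R,T)$ depending only on $R$ and $T$, in particular independent of $\mathbf{u}_0\in B_R(H)$, which is the asserted uniform local Lipschitz property.

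The hard part is closing the nonlinear estimates at the $L^2$-level in three dimensions. In Step~1 one must arrange the trilinear bound so that the dissipation $\|\Lambda^\frac{5}{4}\widetilde{\mathbf{u}}_i\|_{L^2}^2$ appears with the full power $2$ and can be absorbed; this forces the non-absorbable remainder to be $\|\mathbf{v}_i\|_{H^\frac{5}{4}}^4$, which is exactly why the input ball must be taken in $L^4(0,T;H^\frac{5}{4})$ rather than in a weaker space. In Step~3 the terms containing $\mathbf{w}_2$ are delicate because $\mathbf{v}_2$, and hence $\mathbf{w}_2$, is only in $L^4(0,T;H^\frac{5}{4})$ and not in $L^\infty(0,T;H)$, so one must lean on H\"older in time against $\mathbf{v}_2\in L^4(0,T;H^\frac{5}{4})$ to convert those products into a multiple of $\|\mathbf{v}_1-\mathbf{v}_2\|_{L^4(0,T;H^\frac{5}{4})}^2$. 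The remaining manipulations are routine variants of \eqref{2.120} and the estimates in Lemma \ref{lem3.1}.
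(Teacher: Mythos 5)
Your proposal is correct and follows essentially the same route as the paper: an $L^2$-energy bound for $\widetilde{\mathbf{u}}=\mathfrak{M}(\mathbf{v})$ obtained from the cancellation \eqref{2.1} and the embeddings $H^{\frac54}\hookrightarrow L^{12}$, $H^{\frac54}\hookrightarrow W^{1,\frac{12}{5}}$ (the paper's \eqref{1.26*}), followed by the same trilinear difference estimates and a Gronwall plus H\"older-in-time argument converting the source terms into $\|\mathbf{v}_1-\mathbf{v}_2\|_{L^4(0,T;H^{\frac54})}^2$. The only deviations are cosmetic (e.g.\ how the derivative is distributed in the term $b(\boldsymbol{\eta},\mathbf{w}_1,\mathbf{z})$), and they do not change the argument or the uniformity in $\mathbf{u}_0\in B_R(H)$.
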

\begin{proof} Taking inner product with $\widetilde{\mathbf{u}}$, we have
\begin{align*}
d\|\widetilde{\mathbf{u}}\|_{H}^2+2\|\Lambda^\frac{5}{4}\widetilde{\mathbf{u}}\|_{L^2}^2dt\leq 2|(B(\widetilde{\mathbf{u}}+\mathbf{v}, \mathbf{v}),\widetilde{\mathbf{u}}) |dt.
\end{align*}
By the H\"{o}lder inequality and embedding $H^\frac{5}{4}\rightarrow L^{12}$, $H^\frac{5}{4}\rightarrow H^{1, \frac{12}{5}}$ again, we obtain
\begin{align}\label{5.3}
|(B(\widetilde{\mathbf{u}}+\mathbf{v}, \mathbf{v}),\widetilde{\mathbf{u}}) |&\leq \|\Lambda\widetilde{\mathbf{u}}\|^2_{L^\frac{12}{5}}+C\|\widetilde{\mathbf{u}}+\mathbf{v}\|_{H}^2\|\mathbf{v}\|^2_{L^{12}}\nonumber\\
&\leq \|\Lambda^\frac{5}{4}\widetilde{\mathbf{u}}\|_{L^2}^2+C\|\widetilde{\mathbf{u}}+\mathbf{v}\|_{H}^2\|\Lambda^\frac{5}{4}\mathbf{v}\|^2_{L^{2}}.
\end{align}
Following from the Gronwall lemma
\begin{align}\label{1.26*}
\sup_{t\in [0,T]}\|\widetilde{\mathbf{u}}\|_{H}^2+\int_{0}^{T}\|\Lambda^\frac{5}{4}\widetilde{\mathbf{u}}\|_{L^2}^2dt\leq C\left(\|\widetilde{\mathbf{u}}_0\|_{H}^2+\int_{0}^{T}\|\Lambda^\frac{5}{4}\mathbf{v}\|^4_{L^{2}}dt\right){\rm exp}\left(\int_{0}^{T}\|\Lambda^\frac{5}{4}\mathbf{v}\|^2_{L^{2}}dt\right).
\end{align}

For any $\mathbf{v}_1, \mathbf{v}_2\in B_{L^4(0,T;L^2)}(R)$, the mapping $\widetilde{\mathbf{u}}_1-\widetilde{\mathbf{u}}_2$ satisfies
\begin{align}\label{3.15}
&d(\widetilde{\mathbf{u}}_1-\widetilde{\mathbf{u}}_2)+A^\alpha(\widetilde{\mathbf{u}}_1-\widetilde{\mathbf{u}}_2)dt\nonumber\\
&=-B(\widetilde{\mathbf{u}}_1-\widetilde{\mathbf{u}}_2+\mathbf{v}_1-\mathbf{v}_2, \widetilde{\mathbf{u}}_1+\mathbf{v}_1)dt\nonumber\\
&\quad-B(\widetilde{\mathbf{u}}_2+\mathbf{v}_2, \widetilde{\mathbf{u}}_1-\widetilde{\mathbf{u}}_2+\mathbf{v}_1-\mathbf{v}_2)dt.
\end{align}
Taking inner product with $\widetilde{\mathbf{u}}_1-\widetilde{\mathbf{u}}_2$ in \eqref{3.15}, by cancellation property \eqref{2.1} again
\begin{align}\label{4.6**}
&\frac{1}{2}d\|\widetilde{\mathbf{u}}_1-\widetilde{\mathbf{u}}_2\|_{H}^2
+\|\Lambda^\frac{5}{4}(\widetilde{\mathbf{u}}_1-\widetilde{\mathbf{u}}_2)\|_{L^2}^2dt\nonumber\\
&=-(B(\widetilde{\mathbf{u}}_1-\widetilde{\mathbf{u}}_2+\mathbf{v}_1-\mathbf{v}_2, \widetilde{\mathbf{u}}_1+\mathbf{v}_1), \widetilde{\mathbf{u}}_1-\widetilde{\mathbf{u}}_2)dt\nonumber\\
&\quad-(B(\widetilde{\mathbf{u}}_2+\mathbf{v}_2, \mathbf{v}_1-\mathbf{v}_2), \widetilde{\mathbf{u}}_1-\widetilde{\mathbf{u}}_2)dt.
\end{align}
As \eqref{5.3}, we have
\begin{align}
&|-(B(\widetilde{\mathbf{u}}_1-\widetilde{\mathbf{u}}_2+\mathbf{v}_1-\mathbf{v}_2, \widetilde{\mathbf{u}}_1+\mathbf{v}_1), \widetilde{\mathbf{u}}_1-\widetilde{\mathbf{u}}_2)|\nonumber\\
&\leq \|\Lambda(\widetilde{\mathbf{u}}_1-\widetilde{\mathbf{u}}_2)\|_{L^\frac{12}{5}}\|\widetilde{\mathbf{u}}_1+\mathbf{v}_1\|_{L^{12}}
\|\widetilde{\mathbf{u}}_1-\widetilde{\mathbf{u}}_2\|_{H}\nonumber\\
&\quad+\|\Lambda(\widetilde{\mathbf{u}}_1-\widetilde{\mathbf{u}}_2)\|_{L^\frac{12}{5}}\|\widetilde{\mathbf{u}}_1+\mathbf{v}_1\|_{H}
\|\mathbf{v}_1-\mathbf{v}_2\|_{L^{12}}\nonumber\\
&\leq \frac{1}{4}\|\Lambda^\frac{5}{4}(\widetilde{\mathbf{u}}_1-\widetilde{\mathbf{u}}_2)\|^2_{L^2}+C\|\widetilde{\mathbf{u}}_1-\widetilde{\mathbf{u}}_2\|^2_{H}
\|\Lambda^\frac{5}{4}(\widetilde{\mathbf{u}}_1+\mathbf{v}_1)\|^2_{L^{2}}\nonumber\\
&\quad+C\|\Lambda^\frac{5}{4}(\mathbf{v}_1-\mathbf{v}_2)\|^2_{L^2}
\|\widetilde{\mathbf{u}}_1+\mathbf{v}_1\|^2_{H},
\end{align}
and
\begin{align}\label{4.8}
&|-(B(\widetilde{\mathbf{u}}_2+\mathbf{v}_2, \mathbf{v}_1-\mathbf{v}_2), \widetilde{\mathbf{u}}_1-\widetilde{\mathbf{u}}_2)|\nonumber\\
&\leq \|\Lambda(\widetilde{\mathbf{u}}_1-\widetilde{\mathbf{u}}_2)\|_{L^\frac{12}{5}}\|\widetilde{\mathbf{u}}_2+\mathbf{v}_2\|_{H}
\|\mathbf{v}_1-\mathbf{v}_2\|_{L^{12}}\nonumber\\
&\leq \frac{1}{4}\|\Lambda^\frac{5}{4}(\widetilde{\mathbf{u}}_1-\widetilde{\mathbf{u}}_2)\|^2_{L^2}+C\|\Lambda^\frac{5}{4}(\mathbf{v}_1-\mathbf{v}_2)\|^2_{L^2}
\|\widetilde{\mathbf{u}}_2+\mathbf{v}_2\|^2_{H}.
\end{align}
Combining \eqref{4.6**}-\eqref{4.8}, we have
\begin{align}\label{3.16**}
&\frac{1}{2}d\|\widetilde{\mathbf{u}}_1-\widetilde{\mathbf{u}}_2\|_{H}^2+
\|\Lambda^\frac{5}{4}(\widetilde{\mathbf{u}}_1-\widetilde{\mathbf{u}}_2)\|_{L^2}^2dt\nonumber\\
&\leq C\|\widetilde{\mathbf{u}}_1-\widetilde{\mathbf{u}}_2\|^2_{H}
\|\Lambda^\frac{5}{4}(\widetilde{\mathbf{u}}_1+\mathbf{v}_1)\|^2_{L^{2}}dt+C\|\Lambda^\frac{5}{4}(\mathbf{v}_1-\mathbf{v}_2)\|^2_{L^2}
\|\widetilde{\mathbf{u}}_1+\mathbf{v}_1\|^2_{H}dt.
\end{align}
Integrating of $t$ in \eqref{3.16**}, using \eqref{1.26*} and the Gronwall lemma, we see
\begin{align*}
&\sup_{t\in [0,T]}\|\widetilde{\mathbf{u}}_1-\widetilde{\mathbf{u}}_2\|_{H}^2
+\int_{0}^{T}\|\Lambda^\frac{5}{4}(\widetilde{\mathbf{u}}_1-\widetilde{\mathbf{u}}_2)\|_{L^2}^2dt\nonumber\\
&\leq C{\rm exp}\left(\int_{0}^{T}\|\Lambda^\frac{5}{4}(\widetilde{\mathbf{u}}_1+\mathbf{v}_1)\|^2_{L^{2}}dt\right)
\int_{0}^{T}\|\Lambda^\frac{5}{4}(\mathbf{v}_1-\mathbf{v}_2)\|^2_{L^2}
\|\widetilde{\mathbf{u}}_1+\mathbf{v}_1\|^2_{H}dt\nonumber\\
&\leq {\rm exp}\left(\int_{0}^{T}\|\Lambda^\frac{5}{4}(\widetilde{\mathbf{u}}_1+\mathbf{v}_1)\|^2_{L^{2}}dt\right)\|\Lambda^\frac{5}{4}(\mathbf{v}_1-\mathbf{v}_2)\|^2_{L^4(0,T; L^2)}\left(\|\widetilde{\mathbf{u}}_1\|_{L^\infty(0,T; H)}+\|\mathbf{v}_1\|_{L^4(0,T;H^\frac{5}{4})}\right)\nonumber\\
&\leq C(R)\|\Lambda^\frac{5}{4}(\mathbf{v}_1-\mathbf{v}_2)\|^2_{L^4(0,T; L^2)}.
\end{align*}
This completes the proof.
\end{proof}

We next verify the exponential tight condition.
\begin{lemma}\label{lem5.2} The law of family of  $\mathbf{v}^\varepsilon$ is exponential tight in $L^4(0,T; H^\frac{5}{4})$, that is, for any $s>0$, there exists $\varepsilon_0>0$ and $R_s>0$ such that for any $\varepsilon<\varepsilon_0$
$$\mathcal{L}^\varepsilon\left(B_{L^4(0,T;H^\frac{5}{4})}(R_s)\right)\geq 1-{\rm exp}\left(-\frac{s}{\varepsilon}\right).$$
\end{lemma}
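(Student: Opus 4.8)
The plan is to run an exponential (Lyapunov-type) estimate on the $H^\frac{5}{4}$-energy of $\mathbf{v}^\varepsilon$, exploiting the fact that the noise in \eqref{e2.1} carries the small factor $\sqrt{\varepsilon}$ while the exponent we need is on the scale $1/\varepsilon$: this forces the second-order It\^o correction produced by the stochastic integral to remain of unit order, so that it can be absorbed into the dissipation under a dimensionless smallness condition on an auxiliary parameter $\lambda$. Set $X(t):=\|\Lambda^\frac{5}{4}\mathbf{v}^\varepsilon(t)\|_{L^2}^2$. Applying the It\^o formula to $X$ via \eqref{e2.1} gives
\begin{align*}
dX+2\|\Lambda^\frac{5}{2}\mathbf{v}^\varepsilon\|_{L^2}^2\,dt=2\sqrt{\varepsilon}\,(\Lambda^\frac{5}{4}G\,dW,\Lambda^\frac{5}{4}\mathbf{v}^\varepsilon)+\varepsilon\|G\|^2_{L_2(H;H^\frac{5}{4})}\,dt,
\end{align*}
and the quadratic variation of the martingale term is controlled by $4\varepsilon\|G\|^2_{L_2(H;H^\frac{5}{4})}X\,dt$ via the Cauchy--Schwarz inequality.

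Fix $\lambda\in(0,\frac{1}{2}\|G\|^{-2}_{L_2(H;H^\frac{5}{4})}]$ and set $\eta=\lambda/\varepsilon$. First I would introduce the stopping times $\tau_N=\inf\{t:X(t)>N\}$ to legitimate the manipulations, exactly as in the proof of Lemma \ref{lem2.1*}. Applying It\^o's formula to $\exp(\eta X)$, using the Poincar\'e inequality $X\le\|\Lambda^\frac{5}{2}\mathbf{v}^\varepsilon\|_{L^2}^2$ (recall $\lambda_1=1$) and the cancellation $\eta\varepsilon=\lambda$ in the second-order term, the drift of $\exp(\eta X)$ is dominated by
\begin{align*}
\eta\exp(\eta X)\Big[\big(-2+2\lambda\|G\|^2_{L_2(H;H^\frac{5}{4})}\big)\|\Lambda^\frac{5}{2}\mathbf{v}^\varepsilon\|_{L^2}^2+\varepsilon\|G\|^2_{L_2(H;H^\frac{5}{4})}\Big]\,dt.
\end{align*}
With this choice of $\lambda$ the first bracketed term is nonpositive, so dropping it shows that $\exp\big(\eta X(t\wedge\tau_N)-\lambda\|G\|^2_{L_2(H;H^\frac{5}{4})}(t\wedge\tau_N)\big)$ is a nonnegative supermartingale equal to $1$ at $t=0$. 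Doob's maximal inequality then gives
\begin{align*}
\mathrm{P}\Big(\sup_{t\in[0,T]}X(t\wedge\tau_N)\ge R\Big)\le\exp\Big(\lambda\|G\|^2_{L_2(H;H^\frac{5}{4})}T-\frac{\lambda R}{\varepsilon}\Big),
\end{align*}
and letting $N\to\infty$, admissible because $\mathbf{v}^\varepsilon\in C([0,T];H^\frac{5}{4})$ a.s. so that $\tau_N\to\infty$, yields the same bound for $\sup_{t\in[0,T]}\|\Lambda^\frac{5}{4}\mathbf{v}^\varepsilon\|_{L^2}^2$.

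Finally, since $\|\mathbf{v}^\varepsilon\|^4_{L^4(0,T;H^\frac{5}{4})}\le T\big(\sup_{t\in[0,T]}\|\Lambda^\frac{5}{4}\mathbf{v}^\varepsilon\|_{L^2}^2\big)^2$, the event $\{\|\mathbf{v}^\varepsilon\|_{L^4(0,T;H^\frac{5}{4})}>R_s\}$ is contained in $\{\sup_{t\in[0,T]}\|\Lambda^\frac{5}{4}\mathbf{v}^\varepsilon\|_{L^2}^2>R_s^2/\sqrt{T}\}$, so the bound above with $R=R_s^2/\sqrt{T}$ gives
\begin{align*}
\mathrm{P}\big(\|\mathbf{v}^\varepsilon\|_{L^4(0,T;H^\frac{5}{4})}>R_s\big)\le\exp\Big(\lambda\|G\|^2_{L_2(H;H^\frac{5}{4})}T-\frac{\lambda R_s^2}{\varepsilon\sqrt{T}}\Big).
\end{align*}
Choosing $R_s$ so large that $\lambda R_s^2/\sqrt{T}\ge s+1$ and then $\varepsilon_0$ so small that $\varepsilon_0\lambda\|G\|^2_{L_2(H;H^\frac{5}{4})}T\le1$, the right-hand side is at most $\exp(-s/\varepsilon)$ for every $\varepsilon\le\varepsilon_0$, which is the asserted exponential tightness.

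The step I expect to be the main obstacle is turning the exponential process into a genuine supermartingale: one must first deal with the a priori non-integrability of the stochastic integral --- whose quadratic variation involves $\exp(2\eta X)$ --- through the localization by $\tau_N$, and, more essentially, one must check that the It\^o correction $\frac{1}{2}\eta^2\,d\langle X\rangle$ does not destroy the sign. This is exactly why the exponent has to sit on the scale $1/\varepsilon$: the product $\eta^2\varepsilon\|G\|^2_{L_2(H;H^\frac{5}{4})}$, once weighed against the factor $\eta$ pulled out of the drift, collapses to the dimensionless threshold $2\lambda\|G\|^2_{L_2(H;H^\frac{5}{4})}\le1$, which is the ``trick'' that lets the martingale-born term be absorbed by the dissipation. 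A shorter alternative would be to note that $\mathbf{v}^\varepsilon=\sqrt{\varepsilon}\,\mathbf{v}^1$ is a Gaussian element of $L^4(0,T;H^\frac{5}{4})$ and invoke Fernique's theorem together with Chebyshev's inequality; we nonetheless prefer the Lyapunov estimate above, which is self-contained and quantitative.
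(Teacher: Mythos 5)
Your argument is correct, and it reaches the same exponential tightness bound as the paper, but by a genuinely different mechanism. The paper also runs an exponential Lyapunov estimate on the linear equation \eqref{e2.1}, but with the function $F(x)={\rm exp}\big((1+x)^{1/4}/\varepsilon\big)$ evaluated at $x=\|\Lambda^{5/4}\mathbf{v}^\varepsilon\|_{L^2}^4$: the fourth-root exponent is sublinear, so the second-order It\^o correction (a priori of order $1/\varepsilon^2$) is automatically reduced to order $1/\varepsilon$ through bounds like $f^{-2}(x)\|\Lambda^{5/4}\mathbf{v}^\varepsilon\|_{L^2}^2\le 1$, with no smallness condition tying an auxiliary parameter to $\|G\|_{L_2(H;H^{5/4})}$; the dissipation is then used via $e^x(a-x)\le e^{a-1}$ to get a pointwise-in-time bound on $\mathrm{E}F$ growing linearly in $t$, and the conclusion in $L^4(0,T;H^{5/4})$ follows from convexity of $h$, Jensen applied to the time average $\frac1T\int_0^T\|\mathbf{v}^\varepsilon\|^4_{H^{5/4}}dt$, and Chebyshev. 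You instead exponentiate the full square $\eta X$ with $\eta=\lambda/\varepsilon$, absorb the quadratic-variation term $2\eta\lambda\|G\|^2_{L_2(H;H^{5/4})}X$ into the dissipation via Poincar\'e under the smallness condition $\lambda\le\frac12\|G\|^{-2}_{L_2(H;H^{5/4})}$, and then invoke Doob's maximal inequality for the nonnegative supermartingale ${\rm exp}\big(\eta X(t)-\lambda\|G\|^2_{L_2(H;H^{5/4})}t\big)$ together with the trivial embedding of $L^\infty(0,T;H^{5/4})$ into $L^4(0,T;H^{5/4})$. What your route buys is a stronger, pathwise statement (Gaussian-type tails for $\sup_{t\in[0,T]}\|\mathbf{v}^\varepsilon\|^2_{H^{5/4}}$ at speed $\varepsilon$) with a very clean supermartingale argument; the cost, the restriction on $\lambda$, is harmless here since $R_s$ is free. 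What the paper's route buys is that it never needs the martingale correction to be dominated by the dissipation (hence no constraint involving $\|G\|$) and it avoids maximal inequalities altogether, at the price of only first-power exponential moments and the extra Jensen/convexity step. Your localization by $\tau_N$ and the passage $N\to\infty$ are handled correctly (the stopped exponential is bounded, and continuity of $t\mapsto\|\mathbf{v}^\varepsilon(t)\|_{H^{5/4}}$ guarantees $\tau_N>T$ eventually); the only implicit ingredient you should cite is the regularity $\mathbf{v}^\varepsilon\in C([0,T];H^{5/4})\cap L^2(0,T;H^{5/2})$ stated at the start of Section 4, which is what legitimizes applying the It\^o formula to $\|\Lambda^{5/4}\mathbf{v}^\varepsilon\|_{L^2}^2$ with the dissipative term $\|\Lambda^{5/2}\mathbf{v}^\varepsilon\|_{L^2}^2$ finite for a.e. $t$.
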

\begin{proof} Introducing the functions
$$f(x)=(1+x)^\frac{1}{4},$$
and
$$F(x)={\rm exp}\left(\frac{f(x)}{\varepsilon}\right).$$
Then, we have
$$DF(x)=\frac{F(x)Df}{\varepsilon}=\frac{F(x)f^{-3}(x)}{4\varepsilon},$$
and
$$D^2F(x)=\frac{F(x)Df\times Df}{\varepsilon^2}+\frac{F(x)D^2f}{\varepsilon}=\frac{F(x)f^{-6}(x)}{16\varepsilon^2}-\frac{3F(x)f^{-7}(x)}{16\varepsilon}.$$
Using the It\^{o} formula to $F(x)$ with $x=\|\Lambda^\frac{5}{4}\mathbf{v}^\varepsilon\|_{L^2}^4$, we have
\begin{align}\label{5.9*}
&dF(\|\Lambda^\frac{5}{4}\mathbf{v}^\varepsilon\|_{L^2}^4)
=\frac{F(\|\Lambda^\frac{5}{4}\mathbf{v}^\varepsilon\|_{L^2}^4)f^{-3}(\|\Lambda^\frac{5}{4}\mathbf{v}^\varepsilon\|_{L^2}^4)}{4\varepsilon}
d\|\Lambda^\frac{5}{4}\mathbf{v}^\varepsilon\|_{L^2}^4\nonumber\\
&+\frac{1}{2}\left(\frac{F(\|\Lambda^\frac{5}{4}\mathbf{v}^\varepsilon\|_{L^2}^4)f^{-6}(\|\Lambda^\frac{5}{4}\mathbf{v}^\varepsilon\|_{L^2}^4)}{16\varepsilon^2}
-\frac{3F(\|\Lambda^\frac{5}{4}\mathbf{v}^\varepsilon\|_{L^2}^4)f^{-7}(\|\Lambda^\frac{5}{4}\mathbf{v}^\varepsilon\|_{L^2}^4)}{16\varepsilon}\right)
d\langle \!\langle\|\Lambda^\frac{5}{4}\mathbf{v}^\varepsilon\|_{L^2}^4,\|\Lambda^\frac{5}{4}\mathbf{v}^\varepsilon\|_{L^2}^4\rangle \!\rangle,
\end{align}
where notation $\langle\!\langle\cdot, \cdot\rangle \!\rangle$ stands for the quadratic variation.

Using the It\^{o} formula to $\left(\|\Lambda^\frac{5}{4}\mathbf{v}^\varepsilon\|_{L^2}^2\right)^2$, we have
\begin{align}\label{5.11}
d\left(\|\Lambda^\frac{5}{4}\mathbf{v}^\varepsilon\|_{L^2}^2\right)^2=2\|\Lambda^\frac{5}{4}\mathbf{v}^\varepsilon\|_{L^2}^2
d\|\Lambda^\frac{5}{4}\mathbf{v}^\varepsilon\|_{L^2}^2+d\langle\!\langle\|\Lambda^\frac{5}{4}\mathbf{v}^\varepsilon\|_{L^2}^2, \|\Lambda^\frac{5}{4}\mathbf{v}^\varepsilon\|_{L^2}^2\rangle\!\rangle.
\end{align}
Since
\begin{align}\label{5.12}
d\|\Lambda^\frac{5}{4}\mathbf{v}^\varepsilon\|_{L^2}^2=-2\|\Lambda^\frac{5}{2}\mathbf{v}^\varepsilon\|_{L^2}^2dt+2\sqrt{\varepsilon}(\Lambda^\frac{5}{4}G, \Lambda^\frac{5}{4}\mathbf{v}^\varepsilon)dW+\varepsilon\|G\|^2_{L_2(H; H^\frac{5}{4})}dt,
\end{align}
we have by \eqref{5.11} and \eqref{5.12}
\begin{align}\label{5.13}
d\left(\|\Lambda^\frac{5}{4}\mathbf{v}^\varepsilon\|_{L^2}^2\right)^2
&=-4\|\Lambda^\frac{5}{2}\mathbf{v}^\varepsilon\|_{L^2}^2\|\Lambda^\frac{5}{4}\mathbf{v}^\varepsilon\|_{L^2}^2
dt+4\sqrt{\varepsilon}\|\Lambda^\frac{5}{4}\mathbf{v}^\varepsilon\|_{L^2}^2(\Lambda^\frac{5}{4}G, \Lambda^\frac{5}{4}\mathbf{v}^\varepsilon)dW\nonumber\\
&\quad+2\varepsilon\|\Lambda^\frac{5}{4}\mathbf{v}^\varepsilon\|_{L^2}^2\|G\|^2_{L_2(H; H^\frac{5}{4})}dt+4\varepsilon(\Lambda^\frac{5}{4}G, \Lambda^\frac{5}{4}\mathbf{v}^\varepsilon)^2dt,
\end{align}
and
\begin{align}\label{5.14}
d\langle\!\langle\|\Lambda^\frac{5}{4}\mathbf{v}^\varepsilon\|_{L^2}^4,\|\Lambda^\frac{5}{4}\mathbf{v}^\varepsilon\|_{L^2}^4\rangle\!\rangle
&=16\varepsilon\|\Lambda^\frac{5}{4}\mathbf{v}^\varepsilon\|_{L^2}^4(\Lambda^\frac{5}{4}G, \Lambda^\frac{5}{4}\mathbf{v}^\varepsilon)^2dt\nonumber\\
&\leq 16\varepsilon f^4(\|\Lambda^\frac{5}{4}\mathbf{v}^\varepsilon\|_{L^2}^4)(\Lambda^\frac{5}{4}G, \Lambda^\frac{5}{4}\mathbf{v}^\varepsilon)^2dt.
\end{align}
Therefore, from \eqref{5.13} and \eqref{5.14}, we get
\begin{align}\label{5.15}
&\frac{F(\|\Lambda^\frac{5}{4}\mathbf{v}^\varepsilon\|_{L^2}^4)f^{-3}(\|\Lambda^\frac{5}{4}\mathbf{v}^\varepsilon\|_{L^2}^4)}{4\varepsilon}
d\|\Lambda^\frac{5}{4}\mathbf{v}^\varepsilon\|_{L^2}^4\nonumber\\
&\leq -\frac{F(\|\Lambda^\frac{5}{4}\mathbf{v}^\varepsilon\|_{L^2}^4)f^{-3}(\|\Lambda^\frac{5}{4}\mathbf{v}^\varepsilon\|_{L^2}^4)}{\varepsilon}
\|\Lambda^\frac{5}{4}\mathbf{v}^\varepsilon\|_{L^2}^4dt\nonumber\\
&\quad+\frac{F(\|\Lambda^\frac{5}{4}\mathbf{v}^\varepsilon\|_{L^2}^4)f^{-3}(\|\Lambda^\frac{5}{4}\mathbf{v}^\varepsilon\|_{L^2}^4)}{\varepsilon}
\sqrt{\varepsilon}\|\Lambda^\frac{5}{4}\mathbf{v}^\varepsilon\|_{L^2}^2(\Lambda^\frac{5}{4}G, \Lambda^\frac{5}{4}\mathbf{v}^\varepsilon)dW\nonumber\\
&\quad+2\varepsilon\frac{F(\|\Lambda^\frac{5}{4}\mathbf{v}^\varepsilon\|_{L^2}^4)f^{-3}(\|\Lambda^\frac{5}{4}\mathbf{v}^\varepsilon\|_{L^2}^4)}{4\varepsilon}
\|\Lambda^\frac{5}{4}\mathbf{v}^\varepsilon\|_{L^2}^2\|G\|^2_{L_2(H; H^\frac{5}{4})}dt\nonumber\\
&\quad+4\varepsilon\frac{F(\|\Lambda^\frac{5}{4}\mathbf{v}^\varepsilon\|_{L^2}^4)f^{-3}(\|\Lambda^\frac{5}{4}\mathbf{v}^\varepsilon\|_{L^2}^4)}{4\varepsilon}(\Lambda^\frac{5}{4}G, \Lambda^\frac{5}{4}\mathbf{v}^\varepsilon)^2dt,
\end{align}
and
\begin{align}
&\frac{1}{2}\left(\frac{F(\|\Lambda^\frac{5}{4}\mathbf{v}^\varepsilon\|_{L^2}^4)f^{-6}(\|\Lambda^\frac{5}{4}\mathbf{v}^\varepsilon\|_{L^2}^4)}{16\varepsilon^2}
-\frac{3F(\|\Lambda^\frac{5}{4}\mathbf{v}^\varepsilon\|_{L^2}^4)f^{-7}(\|\Lambda^\frac{5}{4}\mathbf{v}^\varepsilon\|_{L^2}^4)}{16\varepsilon}\right)
d\langle\!\langle\|\Lambda^\frac{5}{4}\mathbf{v}^\varepsilon\|_{L^2}^4,\|\Lambda^\frac{5}{4}\mathbf{v}^\varepsilon\|_{L^2}^4\rangle\!\rangle\nonumber\\
&\leq \frac{\varepsilon}{2}\left(\frac{F(\|\Lambda^\frac{5}{4}\mathbf{v}^\varepsilon\|_{L^2}^4)
f^{-2}(\|\Lambda^\frac{5}{4}\mathbf{v}^\varepsilon\|_{L^2}^4)}{\varepsilon^2}
+\frac{3F(\|\Lambda^\frac{5}{4}\mathbf{v}^\varepsilon\|_{L^2}^4)f^{-3}(\|\Lambda^\frac{5}{4}\mathbf{v}^\varepsilon\|_{L^2}^4)}{\varepsilon}\right)
(\Lambda^\frac{5}{4}G, \Lambda^\frac{5}{4}\mathbf{v}^\varepsilon)^2dt.
\end{align}
A simple calculation gives
\begin{align}
&-f^{-3}(\|\Lambda^\frac{5}{4}\mathbf{v}^\varepsilon\|_{L^2}^4)\|\Lambda^\frac{5}{4}\mathbf{v}^\varepsilon\|_{L^2}^4\nonumber\\
&=-\frac{\|\Lambda^\frac{5}{4}\mathbf{v}^\varepsilon\|_{L^2}^4}{(1+\|\Lambda^\frac{5}{4}\mathbf{v}^\varepsilon\|_{L^2}^4)^\frac{3}{4}}\nonumber\\
&=-\frac{\|\Lambda^\frac{5}{4}\mathbf{v}^\varepsilon\|_{L^2}^4+1-1}{(1+\|\Lambda^\frac{5}{4}\mathbf{v}^\varepsilon\|_{L^2}^4)^\frac{3}{4}}\nonumber\\
&= -f(\|\Lambda^\frac{5}{4}\mathbf{v}^\varepsilon\|_{L^2}^4)+f^{-3}(\|\Lambda^\frac{5}{4}\mathbf{v}^\varepsilon\|_{L^2}^4),
\end{align}
and by the definition of $f$
\begin{align}\label{5.18}
f^{-3}(\|\Lambda^\frac{5}{4}\mathbf{v}^\varepsilon\|_{L^2}^4)\|\Lambda^\frac{5}{4}\mathbf{v}^\varepsilon\|_{L^2}^2\leq 1, ~~~~f^{-2}(\|\Lambda^\frac{5}{4}\mathbf{v}^\varepsilon\|_{L^2}^4)\|\Lambda^\frac{5}{4}\mathbf{v}^\varepsilon\|_{L^2}^2\leq 1.
\end{align}
Considering \eqref{5.9*} and \eqref{5.15}-\eqref{5.18}, we have
\begin{align}\label{5.19}
dF(\|\Lambda^\frac{5}{4}\mathbf{v}^\varepsilon\|_{L^2}^4)&\leq -\frac{F(\|\Lambda^\frac{5}{4}\mathbf{v}^\varepsilon\|_{L^2}^4)f^{-3}(\|\Lambda^\frac{5}{4}\mathbf{v}^\varepsilon\|_{L^2}^4)}{\varepsilon}
\|\Lambda^\frac{5}{4}\mathbf{v}^\varepsilon\|_{L^2}^4dt\nonumber\\
&\quad+\frac{F(\|\Lambda^\frac{5}{4}\mathbf{v}^\varepsilon\|_{L^2}^4)f^{-3}(\|\Lambda^\frac{5}{4}\mathbf{v}^\varepsilon\|_{L^2}^4)}{\varepsilon}
\sqrt{\varepsilon}\|\Lambda^\frac{5}{4}\mathbf{v}^\varepsilon\|_{L^2}^2(\Lambda^\frac{5}{4}G, \Lambda^\frac{5}{4}\mathbf{v}^\varepsilon)dW\nonumber\\
&\quad+2F(\|\Lambda^\frac{5}{4}\mathbf{v}^\varepsilon\|_{L^2}^4)f^{-3}(\|\Lambda^\frac{5}{4}\mathbf{v}^\varepsilon\|_{L^2}^4)
\|\Lambda^\frac{5}{4}\mathbf{v}^\varepsilon\|_{L^2}^2\|G\|^2_{L_2(H; H^\frac{5}{4})}dt\nonumber\\
&\quad+\frac{F(\|\Lambda^\frac{5}{4}\mathbf{v}^\varepsilon\|_{L^2}^4)
f^{-2}(\|\Lambda^\frac{5}{4}\mathbf{v}^\varepsilon\|_{L^2}^4)}{2\varepsilon}\|\Lambda^\frac{5}{4}\mathbf{v}^\varepsilon\|_{L^2}^2\|G\|^2_{L_2(H; H^\frac{5}{4})}\nonumber\\
&\leq \frac{F(\|\Lambda^\frac{5}{4}\mathbf{v}^\varepsilon\|_{L^2}^4)}{\varepsilon}\left(-f(\|\Lambda^\frac{5}{4}\mathbf{v}^\varepsilon\|_{L^2}^4)+1\right)dt\nonumber\\
&\quad+2F(\|\Lambda^\frac{5}{4}\mathbf{v}^\varepsilon\|_{L^2}^4)\left(\frac{\|G\|^2_{L_2(H; H^\frac{5}{4})}}{\varepsilon}+\|G\|^2_{L_2(H; H^\frac{5}{4})}\right)dt\nonumber\\
&\quad+\frac{F(\|\Lambda^\frac{5}{4}\mathbf{v}^\varepsilon\|_{L^2}^4)f^{-3}(\|\Lambda^\frac{5}{4}\mathbf{v}^\varepsilon\|_{L^2}^4)}{\varepsilon}
\|\Lambda^\frac{5}{4}\mathbf{v}^\varepsilon\|_{L^2}^2(\Lambda^\frac{5}{4}G, \Lambda^\frac{5}{4}\mathbf{v}^\varepsilon)dW.
\end{align}

Define a stopping time
$$\tau_K=\inf\left\{t: \|\Lambda^\frac{5}{4}\mathbf{v}^\varepsilon\|_{L^2}\geq K\right\}.$$
Note that the stopping time $\tau_K$ is increasing with $\lim_{K\nearrow \infty}\tau_K=T$. Then, from \eqref{5.19} we have
\begin{align}\label{5.20}
&\mathrm{E}F(\|\Lambda^\frac{5}{4}\mathbf{v}^\varepsilon(t\wedge \tau_K)\|_{L^2}^4)\leq{\rm exp}\left(\frac{1}{\varepsilon}\right)\nonumber\\
&+C\int_{0}^{t\wedge \tau_K}\mathrm{E}F(\|\Lambda^\frac{5}{4}\mathbf{v}^\varepsilon(s)\|_{L^2}^4)
\left(-\frac{f(\|\Lambda^\frac{5}{4}\mathbf{v}^\varepsilon\|_{L^2}^4)}{\varepsilon}+\frac{1+\|G\|^2_{L_2(H; H^\frac{5}{4})}}{\varepsilon}+\|G\|^2_{L_2(H; H^\frac{5}{4})}\right)ds,
\end{align}
where the martingale part vanishes after the cut-off.
 Applying inequality $e^x(a-x)\leq e^{a-1}$, from \eqref{5.20} we have
\begin{align*}
\mathrm{E}F(\|\Lambda^\frac{5}{4}\mathbf{v}^\varepsilon(t\wedge \tau_K)\|_{L^2}^4)\leq{\rm exp}\left(\frac{1}{\varepsilon}\right)+t{\rm exp}\left(\frac{1+\|G\|^2_{L_2(H; H^\frac{5}{4})}}{\varepsilon}+\|G\|^2_{L_2(H; H^\frac{5}{4})}\right).
\end{align*}
Passing $K\rightarrow\infty$, the monotone theorem implies
\begin{align}\label{1.45}
\mathrm{E}F(\|\Lambda^\frac{5}{4}\mathbf{v}^\varepsilon(t)\|_{L^2}^4)\leq{\rm exp}\left(\frac{1}{\varepsilon}\right)+t{\rm exp}\left(\frac{1+\|G\|^2_{L_2(H; H^\frac{5}{4})}}{\varepsilon}+\|G\|^2_{L_2(H; H^\frac{5}{4})}\right).
\end{align}

Since the function $h(x)={\rm exp}\left(\frac{\left(1+x\right)^\frac{1}{4}}{\varepsilon}\right)$  is convex when $\varepsilon\leq \frac{1}{3}$, using the convexity and the Chebyshev inequality, \eqref{1.45}, we conclude
\begin{align*}
&\mathrm{P}\left(\int_{0}^{T}\|\mathbf{v}\|_{H^\frac{5}{4}}^4dt\geq R^4\right)\nonumber\\&=\mathrm{P}\left(\frac{1}{T}\int_{0}^{T}\|\mathbf{v}\|_{H^\frac{5}{4}}^4dt\geq \frac{R^4}{T}\right)\nonumber\\
&=\mathrm{P}\left(h\left(\frac{1}{T}\int_{0}^{T}\|\mathbf{v}\|_{H^\frac{5}{4}}^4dt\right)\geq h\left(\frac{R^4}{T}\right)\right)\nonumber\\
&\leq \mathrm{P}\left(\frac{1}{T}\int_{0}^{T}F(\|\mathbf{v}\|_{H^\frac{5}{4}}^4)dt\geq {\rm exp}\left(\frac{\left(1+\frac{R^4}{T}\right)^\frac{1}{4}}{\varepsilon}\right)\right)\nonumber\\
&\leq{\rm exp}\left(-\frac{\left(1+\frac{R^4}{T}\right)^\frac{1}{4}}{\varepsilon}\right)\frac{1}{T}\int_{0}^{T}\mathrm{E}F(\|\mathbf{v}\|_{H^\frac{5}{4}}^4)dt\nonumber\\
&\leq {\rm exp}\left(-\frac{\left(1+\frac{R^4}{T}\right)^\frac{1}{4}}{\varepsilon}\right)\left({\rm exp}\left(\frac{1}{\varepsilon}\right)+T{\rm exp}\left(\frac{1+\|G\|^2_{L_2(H; H^\frac{5}{4})}}{\varepsilon}+\|G\|^2_{L_2(H; H^\frac{5}{4})}\right)\right)\nonumber\\
&\leq  {\rm exp}\left(-\frac{\left(1+\frac{R^4}{T}\right)^\frac{1}{4}}{\varepsilon}\right)\left({\rm exp}\left(\frac{1}{\varepsilon}\right)+T{\rm exp}\left(\frac{1+2\|G\|^2_{L_2(H; H^\frac{5}{4})}}{\varepsilon}\right)\right).
\end{align*}
Finally, we could find $R$ large enough and $\varepsilon$ small enough such that
\begin{align*}
\mathrm{P}\left(\int_{0}^{T}\|\mathbf{v}\|_{H^\frac{5}{4}}^4dt\right)\geq {\rm exp}\left(-\frac{s}{\varepsilon}\right).
\end{align*}
We complete the proof.
\end{proof}
Finally, we have the following result.

\begin{theorem}  Let $\mathbf{u}^\varepsilon$ be the solutions of equations \eqref{e1*}, then the law of $\mathbf{u}^\varepsilon$ satisfies the uniform large deviation principle in $C([0,T]; H)$ in the sense of definition \ref{def3.1}, uniformly with respect to $x\in B_H(R)$, with speed $\varepsilon$ and with good rate function
 \begin{align}I^x(\mathbf{u})=
 \left\{\begin{array}{ll}
\!\!\frac{1}{2}\int_{0}^{T}\|\mathcal{M}(\mathbf{u})\|_{H}^2dt, ~{\rm for}~ \mathcal{M}(\mathbf{u})\in L^2(0,T; H),\\
\!\!+\infty,~  {\rm otherwise},\\
\end{array}\right.
\end{align}
where the mapping
$$\mathcal{M}:\mathbf{u}\rightarrow \mathbf{u}'+A^\alpha\mathbf{u}+B(\mathbf{u},\mathbf{u}),$$
for any $\mathbf{u}\in L^2(0,T; H^\frac{5}{4})\cap W^{1,2}(0,T; H^{-\frac{5}{4}})$.

\end{theorem}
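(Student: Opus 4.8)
\emph{Strategy.} The plan is to apply the uniform contraction principle, Proposition~\ref{pro3.1}, with the choices $F=C([0,T];H^{\frac{5}{4}})$, $E=L^4(0,T;H^{\frac{5}{4}})$, $\mathcal{E}=C([0,T];H)$, taking for $\pi^\varepsilon$ the law of the linear stochastic solution $\mathbf{v}^\varepsilon$ of \eqref{e2.1}, and for the transformation the affine map $\mathrm{I}+\mathfrak{M}$, where $\mathfrak{M}$ is the solution map $\mathbf{v}\mapsto\widetilde{\mathbf{u}}$ of the random Navier--Stokes system introduced above and $\mathrm{I}$ is the bounded inclusion $C([0,T];H^{\frac{5}{4}})\hookrightarrow C([0,T];H)$; recall that $\mathbf{u}^\varepsilon=\mathbf{v}^\varepsilon+\widetilde{\mathbf{u}}^\varepsilon=(\mathrm{I}+\mathfrak{M})(\mathbf{v}^\varepsilon)$. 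Once the three hypotheses of Proposition~\ref{pro3.1} are verified for this data and the resulting rate function identified, the assertion follows, the $x$--uniformity being inherited from the uniform-in-$x$ character of those hypotheses.

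\emph{Verification of the hypotheses.} For hypothesis~(i): by Propositions~\ref{pro4.2} and~\ref{pro2.2*} together with Theorem~\ref{the4.1}, the family $\{\mathbf{v}^\varepsilon\}_{\varepsilon>0}$ satisfies the large deviation principle in the stronger space $C([0,T];H^{\frac{5}{4}})\cap L^2(0,T;H^{\frac{5}{2}})$ with the good rate function $J$ displayed at the end of Section~4; since the inclusion of that space into $F$ is continuous, the contraction principle transfers the LDP to $F$ with the same $J$, and this statement does not involve $x$. Hypothesis~(ii), the exponential tightness of $\pi^\varepsilon$ in $E$, is precisely Lemma~\ref{lem5.2}. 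For hypothesis~(iii), Lemma~\ref{lem3.3} gives that $\mathfrak{M}$ is Lipschitz from balls of $E$ into $\mathcal{E}$ with a constant depending only on the radius, hence uniform over $x=\mathbf{u}_0\in B_H(R)$; since $\mathrm{I}$ is a bounded linear, $x$--independent map $F\to\mathcal{E}$ and $\|\cdot\|_E\le T^{1/4}\|\cdot\|_F$, the affine map $\mathrm{I}+\mathfrak{M}$ is continuous from $B_E(R)\cap F$ (with the $F$-topology) into $\mathcal{E}$, uniformly in $x$, which is exactly what the localization-to-$E$-balls argument in Proposition~\ref{pro3.1} requires. Proposition~\ref{pro3.1} then delivers the ULDP for the law of $\mathbf{u}^\varepsilon$ in $C([0,T];H)$, uniformly over $x\in B_H(R)$, with rate function $I^x(\mathbf{u})=\inf\{J(\psi):\psi\in F,\ \mathbf{u}=\psi+\mathfrak{M}\psi\}$, and with the good-rate-function property inherited from that of $J$ through the continuity of $\mathrm{I}+\mathfrak{M}$ on the compact level sets of $J$.

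\emph{Identification of the rate function.} It remains to rewrite $I^x$. If $\psi=\mathcal{G}^0(\int_0^{\cdot}\varphi\,ds)$ solves $d\psi+A^{\frac{5}{4}}\psi\,dt=G\varphi\,dt$ with $\psi(0)=0$ and $\widetilde{\mathbf{u}}=\mathfrak{M}\psi$ solves the random Navier--Stokes system with $\widetilde{\mathbf{u}}(0)=x$, then adding the two equations shows that $\mathbf{u}=\psi+\widetilde{\mathbf{u}}$ is the solution of the skeleton equation~\eqref{e1} with control $\varphi$ and initial datum $x$, so that $\mathcal{M}(\mathbf{u})=\mathbf{u}'+A^{\frac{5}{4}}\mathbf{u}+B(\mathbf{u},\mathbf{u})=G\varphi$. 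Conversely, by the uniqueness part of the well-posedness for~\eqref{e1}, any control $\varphi\in L^2(0,T;H)$ reproducing this $\mathcal{M}(\mathbf{u})$ regenerates the same $\mathbf{u}$ through $\psi\mapsto\psi+\mathfrak{M}\psi$. Hence $I^x(\mathbf{u})$ equals the infimum of $\frac{1}{2}\int_0^T\|\varphi\|_H^2\,dt$ over all controls that produce $\mathbf{u}$ as the solution of~\eqref{e1}, which is the displayed functional; finiteness holds exactly when $\mathcal{M}(\mathbf{u})\in L^2(0,T;H)$ and $+\infty$ otherwise, with $\mathbf{u}$ ranging over $L^2(0,T;H^{\frac{5}{4}})\cap W^{1,2}(0,T;H^{-\frac{5}{4}})$ so that $\mathcal{M}(\mathbf{u})$ is well defined.

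\emph{Main obstacle.} The substantive analytic work is already concentrated in the two preceding lemmas: Lemma~\ref{lem5.2} for the exponential tightness (requiring the tailored Lyapunov function $F(x)=\exp(f(x)/\varepsilon)$ with $f(x)=(1+x)^{1/4}$ and a careful cancellation of the martingale and quadratic-variation contributions) and Lemma~\ref{lem3.3} for the uniform local Lipschitz estimate on $\mathfrak{M}$, which exploits the hyperdissipative gain through the embeddings $H^{\frac{5}{4}}\hookrightarrow L^{12}$ and $H^{\frac{5}{4}}\hookrightarrow W^{1,\frac{12}{5}}$. Given these inputs, the difficulty in the present theorem is essentially organizational: aligning the abstract framework of Proposition~\ref{pro3.1} with the concrete objects — in particular transferring the $\mathbf{v}^\varepsilon$--LDP from $C([0,T];H^{\frac{5}{4}})\cap L^2(0,T;H^{\frac{5}{2}})$ down to $F$, accommodating the harmless affine shift $\mathrm{I}$ while keeping all constants uniform in $x$, and performing the rate-function identification through the uniqueness of the skeleton equation~\eqref{e1}.
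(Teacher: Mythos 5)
Your proposal follows essentially the same route as the paper: the decomposition $\mathbf{u}^\varepsilon=(\mathrm{I}+\mathfrak{M})(\mathbf{v}^\varepsilon)$, the choices $F=C([0,T];H^{\frac{5}{4}})$, $E=L^4(0,T;H^{\frac{5}{4}})$, $\mathcal{E}=C([0,T];H)$, the LDP for $\mathbf{v}^\varepsilon$ from Section 4, the exponential tightness of Lemma \ref{lem5.2}, the local Lipschitz bound of Lemma \ref{lem3.3}, and the uniform contraction principle of Proposition \ref{pro3.1}. Your write-up is in fact somewhat more explicit than the paper's (transfer of the LDP down to $F$ and the identification of the rate function via the skeleton equation), but the argument is the same and correct to the same extent as the paper's.
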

\begin{proof} Choosing $F=C([0,T]; H^\frac{5}{4})$, $E=L^4(0,T; H^\frac{5}{4})$ and $\mathcal{E}=C([0,T]; H)$ in Proposition \ref{pro3.1}, and Lemma \ref{lem3.3}, Lemma \ref{lem5.2} implies the map
${\rm I}+\mathfrak{M}: F\rightarrow \mathcal{E}$ is Lipschitz on the ball of $E$, then based on the Proposition \ref{pro3.1}, we infer that the family of $\mathbf{u}^\varepsilon$ satisfies the uniform large deviation principle in $C([0,T]; H)$, uniformly with respect to $x\in B_H(R)$, with rate function
$$I^x(\mathbf{u})=\inf\left\{J(\mathbf{v}): \mathbf{u}=\mathbf{v}+\mathfrak{M}(\mathbf{v}),\mathbf{v}\in C([0,T]; H^\frac{5}{4})\right\},$$
where
$$J(\mathbf{v})=\frac{1}{2}\int_{0}^{T}\|\mathcal{M}(\mathbf{u})\|_{H}^2dt,$$
where the mapping
$$\mathcal{M}:\mathbf{u}\rightarrow \mathbf{u}'+A^\alpha\mathbf{u}+B(\mathbf{u},\mathbf{u}),$$
for any $\mathbf{u}\in L^2(0,T; H^\frac{5}{4})\cap W^{1,2}(0,T; H^{-\frac{5}{4}})$.
This completes the proof.
\end{proof}

\section{Large deviations of invariant measure}
In this section, we shall prove the large deviation principle of invariant measure in $H$ for 3D stochastic hyperdissipative Navier-Stokes equations.

We first introduce the action functional corresponding to equations \eqref{e1*}
$$I(\mathbf{u}):=\frac{1}{2}\int_{0}^{T}\|\mathcal{M}(\mathbf{u})\|_{H}^2dt$$
and denote $I^y(\mathbf{u})$ by
 \begin{align*}I^y(\mathbf{u})=
 \left\{\begin{array}{ll}
\!\!I(\mathbf{u}), ~{\rm if}~\mathbf{u}(0)=y,\\
\!\!+\infty,~  {\rm otherwise}.\\
\end{array}\right.
\end{align*}
Then, define the quasi-potential $U: H\rightarrow\infty$ by
$$U(x)=\inf\left\{I(\mathbf{u}): \mathbf{u}\in C([0,T];H), \mathbf{u}(0)=0, \mathbf{u}(T)=x\right\}.$$
$U(x)$ gives the minimum energy of all paths starting from $0$ to reach $x$ for any $x\in H$. For the 2D equations on a torus, using the orthogonality of $A\mathbf{u}$ and $B(\mathbf{u})$, \cite{bcm} gave the  explicit formula of the quasi-potential
\begin{eqnarray*}
U(x)=\left\{\begin{array}{ll}
\!\!\!\|x\|_{V}, x\in V\\
\!\!\!+\infty,~ x\in H \backslash V,\\
\end{array}\right.
\end{eqnarray*}
while we can not give the specific formula in 3D case.

\begin{definition}\label{def4.1} We say a family of invariant measures $\{\mu^\varepsilon\}_{\varepsilon>0}$ on $H$ satisfies the large deviation principle with the speed $\varepsilon$, and the rate function $U: H\rightarrow \infty$, if the followings hold:

i. for each $s>0$, the level set
$$K(s):=\{h\in H: ~U(h)\leq s\}$$
is compact in $H$;

ii. for any $h\in H$, for any $s\geq 0$, $\delta>0$ and $\gamma>0$, there exists $\mathcal{\varepsilon}_0$ such that for all $\varepsilon\leq \varepsilon_0$
$$\mu^x_\varepsilon(B_H(h, \delta))-{\rm exp}\left(-\frac{U(h)+\gamma}{\varepsilon}\right)\geq 0;$$

iii. for any $s\geq 0$, $\delta>0$ and $\gamma>0$, there exists $\mathcal{\varepsilon}_0$ such that for all $\varepsilon\leq \varepsilon_0$
$$\mu^x_\varepsilon(B^c_H(K(s), \delta))\leq {\rm exp}\left(-\frac{s-\gamma}{\varepsilon}\right),$$
where $B^c_H(K(s), \delta)=\{h\in H: dist_H(h, K(s))\geq \delta\}.$
\end{definition}

We state the main result of this paper.
\begin{theorem}\label{thm6.1} The family of invariant measures $\{\mu^\varepsilon\}_{\varepsilon>0}$ satisfies the large deviations in $H$ with speed $\varepsilon$ and rate function $U(x)$ in the sense of definition \ref{def4.1}.
\end{theorem}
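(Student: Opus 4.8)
The plan is to follow the Freidlin--Wentzell programme in the form developed by Sowers \cite{sow} and Cerrai--R\"ockner \cite{cr}, using crucially that the limiting deterministic dynamics $d\mathbf{u}+A^\frac54\mathbf{u}dt+B(\mathbf{u},\mathbf{u})dt=0$ has $\{0\}$ as an exponentially attracting global attractor --- this is precisely \eqref{3.3} with $\varphi=0$, so the limiting dynamics is trivial and there are no exit-time subtleties. Throughout I would exploit the invariance identity $\mu^\varepsilon=\mu^\varepsilon\mathbf{P}^\varepsilon_t$ (for the transition semigroup of \eqref{e1*}) to reduce each of the three conditions of Definition \ref{def4.1} to the uniform large deviation principle of Section 5 on a conveniently chosen but \emph{fixed} time horizon, combined with the contraction \eqref{3.3} and with tail bounds for $\mu^\varepsilon$. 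Condition (i) --- goodness of $U$ --- is routine: given $x_n$ with $U(x_n)\le s$, pick near-optimal skeleton trajectories from $0$ ending at $x_n$; by the compactness of the level sets of the finite-horizon rate function (the compactness arguments of Sections 4--5) and lower semicontinuity of $U$, a subsequence of $\{x_n\}$ converges in $H$ to a point of $K(s)$, so $K(s)$ is compact.

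For the lower bound (ii), fix $h$ with $U(h)<\infty$ and $\delta,\gamma>0$. Choose a horizon $T$ and a control $\varphi$ with $\tfrac12\int_0^T\|\varphi\|_H^2dt\le U(h)+\tfrac\gamma2$ whose skeleton solution from $0$ ends at $h$. By the continuous dependence estimate for the skeleton equation proved in Section 2, there is $\rho>0$ so that for every $x\in B_H(0,\rho)$ the skeleton solution from $x$ driven by the same $\varphi$ stays within $\delta/2$ of the one from $0$ and still has cost at most $U(h)+\tfrac\gamma2$. The lower bound in the uniform large deviation principle (applied in $C([0,T];H)$ around that trajectory) then gives $\mathbf{P}^\varepsilon_T(x,B_H(h,\delta))\ge\exp\!\big(-(U(h)+\tfrac{3\gamma}{4})/\varepsilon\big)$ for all $x\in B_H(0,\rho)$ and all small $\varepsilon$, and by invariance $\mu^\varepsilon(B_H(h,\delta))\ge\mu^\varepsilon(B_H(0,\rho))\exp\!\big(-(U(h)+\tfrac{3\gamma}{4})/\varepsilon\big)$. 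Since It\^o's formula for $\|\mathbf{u}^\varepsilon\|_H^2$ evaluated at stationarity yields $\int_H\|\mathbf{u}\|_H^2\,\mu^\varepsilon(d\mathbf{u})\le C\varepsilon\|G\|^2_{L_2(H,H)}$, the family $\{\mu^\varepsilon\}$ concentrates at $0$, so $\mu^\varepsilon(B_H(0,\rho))\to1$, and condition (ii) follows.

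The upper bound (iii) carries the weight of the proof, in three steps. \textbf{Step 1 (exponential tail of $\mu^\varepsilon$).} A version of Lemma \ref{lem2.1*} rescaled by $1/\varepsilon$: applying It\^o to $\exp(\kappa\|\mathbf{u}^\varepsilon\|_H^2/\varepsilon)$ with $\kappa\|G\|^2_{L_2(H,H)}$ small and using the Poincar\'e inequality to close a dissipative inequality $\tfrac{d}{dt}\mathrm{E}\exp(\cdots)\le-c\,\mathrm{E}\exp(\cdots)+C$ with $c,C$ independent of $\varepsilon$, then evaluating at stationarity, gives $\int_H\exp(\kappa\|\mathbf{u}\|_H^2/\varepsilon)\,\mu^\varepsilon(d\mathbf{u})\le C$ uniformly in $\varepsilon$; hence $\mu^\varepsilon(\{\|\mathbf{u}\|_H>R_0\})\le Ce^{-\kappa R_0^2/\varepsilon}$, and I fix $R_0$ with $\kappa R_0^2>s$. \textbf{Step 2 (geometric lemma).} There is a fixed $T_*=T_*(R_0,s,\delta)$ such that every skeleton trajectory of duration $T_*$ starting in $B_H(0,R_0)$ with cost at most $s$ has its endpoint in $B_H(K(s),\delta/2)$: such a trajectory stays in a fixed ball by \eqref{3.2}, and since its control obeys $\int_0^{T_*}\|\varphi\|_H^2dt\le2s$, for $T_*$ large there is a long sub-interval on which $\|\varphi\|_{L^2}$ is arbitrarily small, on which \eqref{3.3} drives the trajectory into $B_H(0,\eta)$ at some time $t_0$; comparing the remaining piece (cost $\le s$) with the skeleton solution started from $0$ under the same remaining control, via the continuous dependence estimate of Section 2, places the endpoint within $C(s)\eta$ of a point of $K(s)$, and one takes $\eta$ small. \textbf{Step 3 (conclusion).} By Step 2, on $C([0,T_*];H)$ the event $\{\mathbf{u}^\varepsilon(T_*;x)\in B^c_H(K(s),\delta)\}$ forces $\mathbf{u}^\varepsilon(\cdot;x)$ outside the $\delta/2$-neighbourhood of the level set $\mathcal{H}^x(s)$, so the upper bound of the uniform large deviation principle (uniform over $x\in B_H(0,R_0)$) gives $\mathbf{P}^\varepsilon_{T_*}(x,B^c_H(K(s),\delta))\le e^{-(s-\gamma/2)/\varepsilon}$; splitting $\mu^\varepsilon(B^c_H(K(s),\delta))=\int_H\mathbf{P}^\varepsilon_{T_*}(x,B^c_H(K(s),\delta))\,\mu^\varepsilon(dx)$ over $B_H(0,R_0)$ and its complement and using Step 1 gives $\mu^\varepsilon(B^c_H(K(s),\delta))\le e^{-(s-\gamma)/\varepsilon}$ for $\varepsilon$ small, which is condition (iii).

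The main obstacle is the geometric lemma of Step 2: making rigorous that a long low-cost control trajectory must re-enter a prescribed neighbourhood of the equilibrium, with a time $T_*$ depending only on $R_0,s,\delta$, and then propagating that near-return to an endpoint estimate through the continuous dependence bound of the skeleton equation. A secondary technical point is Step 1, which requires the $1/\varepsilon$-rescaled refinement of Lemma \ref{lem2.1*} together with a careful justification (via stopping times and Fatou) of the stationary differential inequality.
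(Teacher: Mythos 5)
Your proposal is correct in outline and shares the paper's overall Freidlin--Wentzell/Sowers skeleton (exponential tail estimate for $\mu^\varepsilon$, the invariance identity $\mu^\varepsilon=\mu^\varepsilon\mathbf{P}^\varepsilon_t$, the uniform large deviations of Section 5, plus deterministic lemmas for the skeleton equation), and your lower bound is essentially the paper's Proposition \ref{pro6.1}, differing only in bookkeeping: you obtain concentration of $\mu^\varepsilon$ near $0$ from the stationary energy balance (or the $1/\varepsilon$-rescaled exponential moment evaluated at stationarity) and achieve uniformity in the starting point by restricting to a small ball $B_H(0,\rho)$ via continuous dependence, whereas the paper first lets the controlled trajectory decay from an arbitrary ball $B_H(0,R)$ using \eqref{3.3} and then invokes Lemma \ref{lem3.4} to get $\mu^\varepsilon(B_H(0,R))\ge 1/2$. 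The genuine divergence is in the upper bound. The paper follows \cite{bc,sow}: it imports Lemma \ref{lem4.1} (endpoints of low-cost paths started \emph{near zero} land near $K(s)$ after a long time) and the companion statement that paths avoiding $B_H(0,\lambda)$ at all integer times $1,\dots,N$ have action exceeding $s$, then decomposes the stationary probability according to whether the path enters $B_H(0,\lambda)$, restarting at that integer time via the Markov property. You instead prove a single stronger deterministic lemma — every cost-$\le s$ trajectory of a \emph{fixed} duration $T_*(R_0,s,\delta)$ started anywhere in $B_H(0,R_0)$ ends in $B_H(K(s),\delta/2)$ — and then apply the uniform LDP upper bound once on the fixed horizon, with no Markov decomposition. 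Your sketch of that lemma is sound and uses only ingredients available in the paper: the pigeonhole choice of a long subinterval with small control energy, the decay estimate \eqref{3.3} together with the energy bound \eqref{3.2} to force a near-return to $0$ uniformly over $B_H(0,R_0)$, and the continuous dependence (Gronwall) estimate of Section 2 to compare the remaining piece with the zero-started skeleton, whose endpoint lies in $K(s)$ by definition of the quasi-potential; the inclusion of $\{ \mathrm{dist}_H(\mathbf{u}^\varepsilon(T_*),K(s))\ge\delta\}$ in $\{\mathrm{dist}_{C([0,T_*];H)}(\mathbf{u}^\varepsilon,\mathcal{H}^x(s))\ge\delta/2\}$ then makes Definition \ref{def3.1}(ii) directly applicable. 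What each route buys: yours is more self-contained and avoids the Markov restart and the sets $H_{r,s,\delta}(N)$, at the price of carrying the full burden of the geometric lemma (the paper cites its two lemmas from \cite{bc} and \cite{sow}); the paper's route splits that burden into two weaker statements and a probabilistic decomposition. For condition (i) the paper simply cites Theorem 4.4 of \cite{bcm}, while your compactness sketch is plausible but would need the same care as there.
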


Theorem 4.4 of \cite{bcm} proved that the level set is compact, therefore, in what follows we only verify the large deviations upper bound and lower bound following ideas from \cite{bc,sow}. For reader's convenience, we still give the details below.

\subsection{Lower bound} In this subsection, we establish the large deviations lower bound. The following exponential tightness of the invariant measure set is a fundamental result.

\begin{lemma}\label{lem3.4} For any $s>0$, there exists $\varepsilon_0$ and $R_s>0$ such that for any $\varepsilon\leq \varepsilon_0$
\begin{align*}
\mu^\varepsilon(B_H^c(0, R_s))\leq {\rm exp}\left(-\frac{s}{\varepsilon}\right).
\end{align*}
\end{lemma}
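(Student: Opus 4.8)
The plan is to exploit the invariance of $\mu^\varepsilon$ together with an exponential moment estimate of the type proved in Lemma \ref{lem2.1*}, but now tracked carefully in $\varepsilon$. The starting point is the identity
\[
\mu^\varepsilon(B_H^c(0,R)) \;=\; \mu^\varepsilon\{\|\mathbf{u}\|_H > R\} \;=\; \int_H \mathbf{P}\{\|\mathbf{u}^\varepsilon(t;x)\|_H > R\}\, d\mu^\varepsilon(x)
\]
valid for every $t\geq 0$, since $\mu^\varepsilon$ is invariant. The idea is to bound the inner probability by a Chebyshev inequality applied to an exponential functional: for a parameter $\kappa>0$ to be chosen of order $1/\varepsilon$,
\[
\mathbf{P}\{\|\mathbf{u}^\varepsilon(t;x)\|_H > R\} \;\leq\; {\rm exp}\left(-\kappa R^2\right)\,\mathrm{E}\,{\rm exp}\left(\kappa\|\mathbf{u}^\varepsilon(t;x)\|_H^2\right).
\]
So everything reduces to a uniform-in-$t$ bound on $\mathrm{E}\,{\rm exp}(\kappa\|\mathbf{u}^\varepsilon(t)\|_H^2)$ with the right dependence on $\varepsilon$.

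To get that bound I would redo the It\^o computation of Lemma \ref{lem2.1*} with $\kappa = 1/(2\varepsilon)$ (or a comparable constant times $1/\varepsilon$) in the exponent, i.e. apply the It\^o formula to ${\rm exp}\big(\tfrac{1}{\varepsilon}\Phi(t)\big)$ where $\Phi(t) = \|\mathbf{u}^\varepsilon(t)\|_H^2 + \int_0^t \|\Lambda^{5/4}\mathbf{u}^\varepsilon\|_{L^2}^2\,ds$ as before. Using \eqref{a2.14}, the Poincar\'e inequality $\|\mathbf{u}^\varepsilon\|_H^2 \leq \|\Lambda^{5/4}\mathbf{u}^\varepsilon\|_{L^2}^2$, and the correction term from the quadratic variation (which is of order $\varepsilon \cdot \tfrac{1}{\varepsilon^2} = \tfrac{1}{\varepsilon}$ but carries the small factor $\varepsilon\|G\|_{L_2(H;H)}^2$ from the noise), the crucial cancellation is
\[
-\frac{1}{\varepsilon}\|\Lambda^{5/4}\mathbf{u}^\varepsilon\|_{L^2}^2 + \frac{C}{\varepsilon}\,\varepsilon\|G\|_{L_2(H;H)}^2\,\|\mathbf{u}^\varepsilon\|_H^2 \;\leq\; \frac{1}{\varepsilon}\big(-1 + C\varepsilon\|G\|_{L_2(H;H)}^2\big)\|\mathbf{u}^\varepsilon\|_H^2 \;\leq\; 0
\]
once $\varepsilon$ is small. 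After localizing with a stopping time $\tau_N$ to kill the martingale, taking expectations, and applying Gronwall, one obtains
\[
\mathrm{E}\,{\rm exp}\!\left(\frac{1}{\varepsilon}\|\mathbf{u}^\varepsilon(t)\|_H^2\right) \;\leq\; {\rm exp}\!\left(\frac{1}{\varepsilon}\|x\|_H^2\right) + {\rm exp}\!\left(C\|G\|_{L_2(H;H)}^2\, t\right),
\]
where the last exponent has only a bounded-in-$\varepsilon$ rate in $t$. This is exactly the point where the small-noise scaling matters; without the $\sqrt{\varepsilon}$ in front of the noise the constant would blow up.

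The final step is to average over $\mu^\varepsilon$ and let $t\to\infty$. We have
\[
\mu^\varepsilon(B_H^c(0,R)) \;\leq\; {\rm exp}\!\left(-\frac{R^2}{2\varepsilon}\right)\left(\int_H {\rm exp}\!\left(\frac{\|x\|_H^2}{\varepsilon}\right) d\mu^\varepsilon(x) + {\rm exp}\!\left(C\|G\|_{L_2(H;H)}^2 t\right)\right)^{1/2}\!,
\]
and here lies the main obstacle: the integral $\int_H {\rm exp}(\|x\|_H^2/\varepsilon)\,d\mu^\varepsilon$ must itself be controlled. The clean way around this is to note that by invariance the left side does not depend on $t$, so one can use a self-improving/bootstrap argument: first establish a crude finite bound $\int_H \|x\|_H^2\,d\mu^\varepsilon(x) \leq C_\varepsilon$ from Lemma \ref{lem2.2*} (or directly from \eqref{2.12} combined with invariance, which forces $\mathrm{E}_{\mu^\varepsilon}\|\mathbf{u}\|_H^2$ to be bounded by $C\varepsilon\|G\|_{L_2(H;H)}^2$ times a constant, i.e. of order $\varepsilon$), then feed this back to show the exponential moment under $\mu^\varepsilon$ is finite and bounded uniformly in $\varepsilon$. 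Concretely, integrating the differential inequality $d\mathrm{E}_{\mu^\varepsilon}{\rm exp}(\tfrac{1}{\varepsilon}\|\mathbf{u}^\varepsilon\|_H^2) \leq (\text{something} \leq C\|G\|^2)\,dt$ against the stationary distribution forces the derivative to vanish, yielding $\int_H {\rm exp}(\tfrac{1}{\varepsilon}\|x\|_H^2)\,d\mu^\varepsilon(x) \leq C_0$ with $C_0$ independent of $\varepsilon$. Plugging this in,
\[
\mu^\varepsilon(B_H^c(0,R)) \;\leq\; C_0^{1/2}\,{\rm exp}\!\left(-\frac{R^2}{2\varepsilon}\right),
\]
and choosing $R = R_s$ with $R_s^2/2$ slightly larger than $s$ (to absorb the constant $C_0^{1/2}$ into the exponent for $\varepsilon$ small) gives $\mu^\varepsilon(B_H^c(0,R_s)) \leq {\rm exp}(-s/\varepsilon)$ for all $\varepsilon \leq \varepsilon_0$, which is the claim.
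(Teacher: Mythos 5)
Your route is genuinely different from the paper's: you work with the invariance identity and try to bound the exponential moment $\int_H \exp(\|x\|_H^2/\varepsilon)\,d\mu^\varepsilon(x)$ of the invariant measure itself, whereas the paper never needs such a moment — it represents $\mu^\varepsilon$ through the ergodic time average $\lim_{T\to\infty}\frac1T\int_0^T\mathrm{P}(\|\mathbf{u}^\varepsilon(t)\|_H\ge R_s)\,dt$ (using the unique ergodicity of Section 3) and uses the time-weighted Lyapunov function $F(t,x)=\exp\bigl(t+x/\varepsilon\bigr)$ to get a bound on $\mathrm{E}\exp(\|\mathbf{u}^\varepsilon(t)\|_H^2/\varepsilon)$ that is uniform in $t$ up to a term $\exp(-t+\|\mathbf{u}_0\|_H^2/\varepsilon)$ which the time average washes out. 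The problem is that the two steps on which your alternative hinges are not correct as written. First, the $\varepsilon$-bookkeeping in the It\^o computation for $\exp(\|\mathbf{u}^\varepsilon\|_H^2/\varepsilon)$ double counts the noise scaling: the second-order correction is $\frac{1}{2\varepsilon^2}\exp(\cdot)$ times the quadratic variation of $\|\mathbf{u}^\varepsilon\|_H^2$, and that quadratic variation is bounded by $4\varepsilon\|G\|^2_{L_2(H;H)}\|\mathbf{u}^\varepsilon\|_H^2\,dt$, so the correction equals $\frac{2}{\varepsilon}\|G\|^2_{L_2(H;H)}\|\mathbf{u}^\varepsilon\|_H^2\exp(\cdot)$ — the single factor $\varepsilon$ coming from the $\sqrt{\varepsilon}$-noise is already spent reducing $\varepsilon^{-2}$ to $\varepsilon^{-1}$ and cannot also produce a small prefactor. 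Hence your displayed cancellation $-\frac1\varepsilon\|\Lambda^{5/4}\mathbf{u}^\varepsilon\|_{L^2}^2+\frac{C}{\varepsilon}\,\varepsilon\|G\|^2\|\mathbf{u}^\varepsilon\|_H^2\le\frac1\varepsilon(-1+C\varepsilon\|G\|^2)\|\mathbf{u}^\varepsilon\|_H^2\le 0$ ``once $\varepsilon$ is small'' is false; the correct comparison is $\frac1\varepsilon\bigl(-2+2\|G\|^2_{L_2(H;H)}\bigr)\|\mathbf{u}^\varepsilon\|_H^2$, whose sign does not improve as $\varepsilon\to 0$. This is exactly why the paper works with $e^t\exp(x/\varepsilon)$ and the elementary bound $e^x(a-x)\le e^{a-1}$ (as in the proof of Lemma \ref{lem5.2}) rather than with smallness of $\varepsilon$, which is what makes Lemma \ref{lem2.1*} work in the unscaled case.

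Second, even granting an estimate of the form $\mathrm{E}\exp\bigl(\frac1\varepsilon\|\mathbf{u}^\varepsilon(t)\|_H^2\bigr)\le\exp\bigl(\frac1\varepsilon\|x\|_H^2\bigr)+\exp\bigl(C\|G\|^2 t\bigr)$, your bootstrap — ``integrating the differential inequality against the stationary distribution forces the derivative to vanish, yielding $\int_H\exp(\frac1\varepsilon\|x\|_H^2)\,d\mu^\varepsilon\le C_0$'' — does not follow. A right-hand side growing like $\exp(C\|G\|^2t)$ gives nothing as $t\to\infty$, and stationarity only says $\frac{d}{dt}\mathrm{E}_{\mu^\varepsilon}\Phi=0$; to extract a bound on $\mathrm{E}_{\mu^\varepsilon}\Phi$ you need a genuinely dissipative inequality of the type $(\text{drift of }\Phi)\le -c\,\Phi+C$ with $c,C$ uniform in $\varepsilon$ (which is precisely what the miscomputed cancellation was supposed to supply), together with a truncation/Fatou argument justifying that the stationarity identity may be applied to the unbounded functional $\exp(\|x\|_H^2/\varepsilon)$, whose $\mu^\varepsilon$-integrability is not known a priori. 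The paper's proof avoids both issues: the weight $e^t$ plus the Poincar\'e inequality yield the $t$-uniform bound $\mathrm{E}\exp\bigl(\frac1\varepsilon\|\mathbf{u}^\varepsilon(t)\|_H^2\bigr)\le\exp\bigl(-t+\frac1\varepsilon\|\mathbf{u}_0\|_H^2\bigr)+\exp\bigl(C\|G\|^2_{L_2(H;H)}\bigr)$, and the Chebyshev bound is then averaged in time so that the initial-data term disappears in the limit $T\to\infty$. To salvage your argument you would either have to prove the exponential moment bound for $\mu^\varepsilon$ rigorously along these Lyapunov/truncation lines (with the correct constants), or simply switch to the paper's time-averaging device.
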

\begin{proof}Define the function
$$F(t, x)={\rm exp}\left(t+\frac{x}{\varepsilon}\right).$$
Then, using the It\^{o} formula to the function $F(t, \|\mathbf{u}^\varepsilon\|_{H}^2)$, as Lemma \ref{lem2.1*}, we could obtain
\begin{align*}
\mathrm{E}{\rm exp}\left(t+\frac{\|\mathbf{u}^\varepsilon\|_{H}^2}{\varepsilon}\right)\leq {\rm exp}\left(\frac{\|\mathbf{u}_0^\varepsilon\|_{H}^2}{\varepsilon}\right)+\int_{0}^{t}{\rm exp}(s+C\|G\|^2_{L_2(H;H)})ds,
\end{align*}
then, since $te^{-t}\leq 1$, we have
\begin{align}\label{6.7}
\mathrm{E}{\rm exp}\left(\frac{\|\mathbf{u}^\varepsilon\|_{H}^2}{\varepsilon}\right)\leq {\rm exp}\left(-t+\frac{\|\mathbf{u}_0^\varepsilon\|_{H}^2}{\varepsilon}\right)+{\rm exp}(C\|G\|^2_{L_2(H;H)}).
\end{align}
Using the ergodicity of the family of invariant measures $\mu^\varepsilon$, the Chebyshev inequality and \eqref{6.7}
\begin{align*}
\mu^\varepsilon(B_H^c(0, R_s))&=\lim_{T\rightarrow\infty}\frac{1}{T}\int_{0}^{T}\mathrm{P}(\|\mathbf{u}^\varepsilon\|_{H}\geq R_s)dt\nonumber\\
&=\lim_{T\rightarrow\infty}\frac{1}{T}\int_{0}^{T}\mathrm{P}\left(\frac{\|\mathbf{u}^\varepsilon\|^2_{H}}{\varepsilon}\geq \frac{R_s^2}{\varepsilon}\right)dt\nonumber\\
&=\lim_{T\rightarrow\infty}\frac{1}{T}\int_{0}^{T}\mathrm{P}\left({\rm exp}\left(\frac{\|\mathbf{u}^\varepsilon\|^2_{H}}{\varepsilon}\right)\geq {\rm exp}\left(\frac{R_s^2}{\varepsilon}\right)\right)dt\nonumber\\
&\leq {\rm exp}\left(-\frac{R_s^2}{\varepsilon}\right)\lim_{T\rightarrow\infty}\frac{1}{T}\int_{0}^{T}\mathrm{E}{\rm exp}\left(\frac{\|\mathbf{u}^\varepsilon\|^2_{H}}{\varepsilon}\right)dt\nonumber\\
&\leq {\rm exp}\left(-\frac{R_s^2}{\varepsilon}\right)\lim_{T\rightarrow\infty}\frac{1}{T}\int_{0}^{T}{\rm exp}\left(-t+\frac{\|\mathbf{u}_0^\varepsilon\|_{H}^2}{\varepsilon}\right)+{\rm exp}(C\|G\|^2_{L_2(H;H)})dt\nonumber\\
&\leq  {\rm exp}\left(-\frac{R_s^2-\|\mathbf{u}_0^\varepsilon\|_{H}^2-C\varepsilon\|G\|^2_{L_2(H;H)}}{\varepsilon}\right),
\end{align*}
this completes the proof by choosing $R_s$ largely.
\end{proof}

\begin{proposition}\label{pro6.1} The family of invariant measures $\mu^\varepsilon$ satisfies the large deviations lower bound in $H$ with rate function $U(x)$.
\end{proposition}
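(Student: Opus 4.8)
The statement to prove is the lower half of Definition \ref{def4.1}, i.e. part (ii): for every $h\in H$ and all $\delta,\gamma>0$ there is $\varepsilon_0>0$ with $\mu^\varepsilon(B_H(h,\delta))\ge \exp(-(U(h)+\gamma)/\varepsilon)$ for $\varepsilon\le\varepsilon_0$; if $U(h)=\infty$ this is vacuous, so assume $U(h)<\infty$. The plan follows the scheme of \cite{sow,bc}, resting on four ingredients: the invariance identity $\mu^\varepsilon(A)=\int_H\mathbf{P}_t(x,A)\,d\mu^\varepsilon(x)$, valid for all $t>0$; the exponential tightness of $\{\mu^\varepsilon\}$ from Lemma \ref{lem3.4}; the exponential decay to the origin of the uncontrolled skeleton flow, i.e. \eqref{3.3} with $\varphi\equiv 0$; and the uniform large deviation lower bound for $\{\mathbf{u}^\varepsilon\}$ proved in Section 5. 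The geometric idea is to consider a transition over a window $[0,T_1+T]$ and split it into two legs: over $[0,T_1]$ drive a generic starting point $x$ from a fixed bounded set into a small ball $B_H(0,2\rho)$ at asymptotically no exponential cost (the optimal path being the zero-cost deterministic flow, which relaxes to $0$), and over $[0,T]$ follow a near-optimal path for the quasi-potential from near $0$ to near $h$, at cost $\le U(h)+o(1)$.

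The concrete choices, in order once $h,\delta,\gamma$ are fixed, are as follows. By Lemma \ref{lem3.4} (with $s=1$) pick $R_0$ and $\varepsilon_0$ with $\mu^\varepsilon(B_H(0,R_0))\ge\tfrac12$ for $\varepsilon\le\varepsilon_0$; enlarging $R$ in the uniform LDP if necessary we may take $B_H(0,R_0)\subset\mathcal{D}$. By the definition of $U$ pick $\mathbf{w}\in C([0,T];H)$ with $\mathbf{w}(0)=0$, $\mathbf{w}(T)=h$ and $I(\mathbf{w})\le U(h)+\gamma/4$; since $I(\mathbf{w})<\infty$, $\mathbf{w}$ is the unique solution of $d\mathbf{w}+A^{5/4}\mathbf{w}\,dt+B(\mathbf{w},\mathbf{w})\,dt=g\,dt$, $\mathbf{w}(0)=0$, with forcing $g:=\mathcal{M}(\mathbf{w})\in L^2(0,T;H)$ and $I(\mathbf{w})=\tfrac12\int_0^T\|g\|_H^2\,dt$. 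Let $\mathbf{w}_z$ be the solution of the same equation with datum $z$; then $\mathcal{M}(\mathbf{w}_z)=g$, so $I^z(\mathbf{w}_z)=I(\mathbf{w})\le U(h)+\gamma/4=:s$, and by the continuous dependence on initial data (the argument of the proposition in Section 2.3) we may fix $\rho>0$ so small that $\|\mathbf{w}_z(T)-h\|_H<\delta/2$ for all $z\in B_H(0,2\rho)$. Finally, for $x\in B_H(0,R_0)$ let $\mathbf{u}^x$ be the skeleton solution with $\varphi\equiv 0$ and datum $x$: then $I^x(\mathbf{u}^x)=0$, and by \eqref{3.3} one has $\|\mathbf{u}^x(T_1)\|_H^2\le CR_0^2 e^{-CT_1/2}$, so we may fix $T_1=T_1(R_0,\rho)$ with $\|\mathbf{u}^x(T_1)\|_H<\rho$ for all $x\in B_H(0,R_0)$.

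With these fixed, the uniform LDP lower bound of Section 5, applied on the horizon $[0,T_1]$ to the target $\mathbf{u}^x\in\mathcal{H}^x(0)$ with radius $\rho$ and tolerance $\gamma/4$, gives uniformly in $x\in B_H(0,R_0)$ that
$$\mathbf{P}_{T_1}(x,B_H(0,2\rho))\ \ge\ \mathbf{P}\bigl(\|\mathbf{u}^\varepsilon(\cdot;x)-\mathbf{u}^x\|_{C([0,T_1];H)}<\rho\bigr)\ \ge\ \exp\!\bigl(-\gamma/(4\varepsilon)\bigr),$$
and applied on $[0,T]$ to the target $\mathbf{w}_z\in\mathcal{H}^z(s)$ with radius $\delta/2$ and tolerance $\gamma/4$, gives uniformly in $z\in B_H(0,2\rho)$ that
$$\mathbf{P}_T(z,B_H(h,\delta))\ \ge\ \mathbf{P}\bigl(\|\mathbf{u}^\varepsilon(\cdot;z)-\mathbf{w}_z\|_{C([0,T];H)}<\delta/2\bigr)\ \ge\ \exp\!\bigl(-(U(h)+\gamma/2)/\varepsilon\bigr),$$
for all $\varepsilon$ below a threshold depending only on the quantities already fixed. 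Chaining via Chapman--Kolmogorov $\mathbf{P}_{T_1+T}(x,\cdot)=\int_H\mathbf{P}_T(z,\cdot)\,\mathbf{P}_{T_1}(x,dz)$ (keeping only $z\in B_H(0,2\rho)$) and then integrating against $\mu^\varepsilon$ via invariance at time $T_1+T$ (keeping only $x\in B_H(0,R_0)$) yields
$$\mu^\varepsilon(B_H(h,\delta))\ \ge\ \tfrac12\exp\!\bigl(-(U(h)+3\gamma/4)/\varepsilon\bigr)\ \ge\ \exp\!\bigl(-(U(h)+\gamma)/\varepsilon\bigr)$$
once $\varepsilon$ is so small that $\tfrac12\ge e^{-\gamma/(4\varepsilon)}$; since $h,\delta,\gamma$ were arbitrary, this is Definition \ref{def4.1}(ii).

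I expect the delicate point to be the bookkeeping in the gluing: one must be sure that the uniform LDP of Section 5 is genuinely available on two different time horizons and with initial data sweeping the whole ball $B_H(0,R_0)$ (hence the insistence that $R$ in $\mathcal{D}$ be arbitrary), and, more subtly, that relocating the start of a near-optimal path for $U(h)$ from $0$ to a nearby $z$ costs nothing in action. The latter is why the forcing $g=\mathcal{M}(\mathbf{w})$ is held fixed — so that $I^z(\mathbf{w}_z)=I(\mathbf{w})$ exactly and only the terminal value drifts, controlled by continuous dependence on initial data rather than by any estimate on $U$ near the origin.
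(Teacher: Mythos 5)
Your proposal is correct, and it rests on the same four pillars as the paper's argument (the decay estimate \eqref{3.3}, continuity with respect to initial data of the controlled flow, the uniform large deviations lower bound of Section 5 over bounded sets of initial data, and invariance plus the exponential tightness of Lemma \ref{lem3.4}), but you organize the gluing differently. The paper works on a single window $[0,T]$: it constructs, for every $y$ in the fixed ball $B_H(0,R)$, one controlled skeleton path $\mathbf{u}_y$ driven by a single control $\varphi_0$ (in effect zero control during a relaxation phase, then a near-optimal control), so that the action bound \eqref{4.6*} and the uniform endpoint closeness \eqref{4.6} hold simultaneously; it then applies the uniform LDP lower bound once, uses invariance at time $T$, and multiplies by $\mu^\varepsilon(B_H(0,R))\geq \tfrac12$. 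You instead split at the probabilistic level: you apply the uniform LDP twice, on two horizons $[0,T_1]$ and $[0,T]$ (first leg around the zero-cost uncontrolled relaxation path, second leg around the fixed-forcing path $\mathbf{w}_z$ with $I^z(\mathbf{w}_z)=I(\mathbf{w})$ started from nearby data), and chain via Chapman--Kolmogorov and invariance at time $T_1+T$. What each buys: your Markov-chaining version avoids having to build and estimate the concatenated control uniformly in $y$ in one shot (the paper compresses this step into the claim \eqref{4.6*}--\eqref{4.6} with a citation to the 2D argument), at the modest price of invoking the Markov property and two LDP applications with different time horizons; the paper's version is more compact once that claim is granted. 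Your observation that holding the forcing $g=\mathcal{M}(\mathbf{w})$ fixed makes the translated path cost exactly $I(\mathbf{w})$, with only the endpoint drifting and controlled by continuous dependence, is precisely the mechanism hidden in the paper's claim, and your bookkeeping of the $\gamma/4$ tolerances and the final absorption of the factor $\tfrac12$ is sound.
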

\begin{proof} Applying the continuity dependence of initial data of solution to equations \eqref{e1} and the decay estimate \eqref{3.3}, we claim that for any $x\in H$, we could find $T>0$, $\varphi_0\in L^2(0,T; H)$ and $\mathbf{u}_y\in C([0,T];H)$ such that
\begin{align}\label{4.6*}
\frac{1}{2}\int_{0}^{T}\|\varphi_0\|^2_{H}dt\leq U(x)+\frac{\gamma}{4},
\end{align}
and
\begin{align}\label{4.6}\sup_{\|y\|_{H}\leq R}\|\mathbf{u}_y(T)-x\|_{H}\leq \frac{\delta}{2},\end{align}
where $\mathbf{u}_y$ is the solution of equations
$$d\mathbf{u}+A^\frac{5}{4}\mathbf{u}dt+B(\mathbf{u},\mathbf{u})dt=G\varphi_0 dt,~ \mathbf{u}(0)=y,$$
see also \cite{bc} for more details. Since the family of $\mathbf{u}^\varepsilon$ satisfies the uniform large deviation principle in $C([0,T]; H)$, therefore, according to lower bound of uniform large deviations and \eqref{4.6*}, we have there exists $\varepsilon_0>0$ such that for any $\varepsilon\leq \varepsilon_0$
\begin{align}\label{4.7}
\inf_{y\in B_H(0,R)}\mathrm{P}\left\{\|\mathbf{u}^\varepsilon-\mathbf{u}_y\|_{C([0,T];H)}<\frac{\delta}{2}\right\}\geq {\rm exp}\left(-\frac{U(x)+\frac{\gamma}{2}}{\varepsilon}\right).
\end{align}
We infer from the invariance of measures $\mu^\varepsilon$ and \eqref{4.6} that
\begin{align}\mu^\varepsilon(B_H(x, \delta))&=\int_{H}\mathrm{P}\left\{\|\mathbf{u}^\varepsilon(T)-x\|_{H}\leq \delta\right\}d\mu^\varepsilon\nonumber\\
&\geq \int_{H}\mathrm{P}\left\{\|\mathbf{u}^\varepsilon(T)-\mathbf{u}_y\|_{H}\leq \frac{\delta}{2}\right\}d\mu^\varepsilon\nonumber\\
&\geq \int_{H}\mathrm{P}\left\{\|\mathbf{u}^\varepsilon(T)-\mathbf{u}_y\|_{C([0,T];H)}\leq \frac{\delta}{2}\right\}d\mu^\varepsilon\nonumber\\
&\geq \int_{B_H(0,R)}\mathrm{P}\left\{\|\mathbf{u}^\varepsilon(T)-\mathbf{u}_y\|_{C([0,T];H)}\leq \frac{\delta}{2}\right\}d\mu^\varepsilon\nonumber\\
&\geq \mu^\varepsilon(B_H(0,R))\inf_{y\in B_H(0,R)}\mathrm{P}\left\{\|\mathbf{u}^\varepsilon(T)-\mathbf{u}_y\|_{C([0,T];H)}\leq \frac{\delta}{2}\right\}.
\end{align}
By Lemma \ref{lem3.4}, we could find $\varepsilon_0>0$ such that for $\varepsilon\leq \varepsilon_0$
\begin{align}\label{4.9}
\mu^\varepsilon(B_H(0,R))\geq 1-{\rm exp}\left(-\frac{s}{\varepsilon}\right)\geq 1-{\rm exp}\left(-\frac{s}{\varepsilon_0}\right)\geq \frac{1}{2}.
\end{align}
From inequalities \eqref{4.7}-\eqref{4.9}, we have
\begin{align*}
\mu^\varepsilon(B_H(x, \delta))\geq \frac{1}{2} {\rm exp}\left(-\frac{U(x)+\frac{\gamma}{2}}{\varepsilon}\right).
\end{align*}
This completes the proof of lower bound.
\end{proof}

\subsection{Upper bound}

In this subsection, we establish the large deviations upper bound.
\begin{proposition}\label{pro4.2*}
  The family of invariant measures $\mu^\varepsilon$ satisfies the large deviations upper bound in $H$ with rate function $U(x)$.
\end{proposition}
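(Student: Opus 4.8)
The plan is to verify item iii of Definition~\ref{def4.1}: for fixed $s\ge0$, $\delta>0$ and $\gamma>0$, to produce $\varepsilon_0>0$ with $\mu^\varepsilon(B_H^c(K(s),\delta))\le\exp(-(s-\gamma)/\varepsilon)$ for all $\varepsilon\le\varepsilon_0$. The starting point is the invariance $\mu^\varepsilon=\mathbf P_T^*\mu^\varepsilon$, valid for every $T>0$, which gives
\begin{align*}
\mu^\varepsilon(B_H^c(K(s),\delta))=\int_H\mathrm P\left(\mathrm{dist}_H(\mathbf u^\varepsilon(T;x),K(s))\ge\delta\right)d\mu^\varepsilon(x).
\end{align*}
I would split this integral over $\{\|x\|_H\le R\}$ and $\{\|x\|_H>R\}$, with $R=R_s$ chosen as in Lemma~\ref{lem3.4}, so that the contribution of $\{\|x\|_H>R\}$ is controlled by $\mu^\varepsilon(B_H^c(0,R))\le\exp(-s/\varepsilon)$ for small $\varepsilon$. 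It then remains to bound, uniformly in $\|x\|_H\le R$, the probability that the solution of \eqref{e1*} lands at distance $\ge\delta$ from $K(s)$ at a suitably large time $T$.

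The crucial structural fact is that $0\in K(s)$ for every $s\ge0$ (take $\mathbf u\equiv0$, so that $I(\mathbf u)=0$), together with the global attraction of the deterministic skeleton towards $0$. Indeed, any path $\mathbf w\in\mathcal H^x(s)=\{\mathbf u\in C([0,T];H):I^x(\mathbf u)\le s\}$ is precisely a solution of the controlled system \eqref{e1} started at $x$ with $\tfrac12\int_0^T\|\mathcal M(\mathbf w)\|_H^2\,dt\le s$, so the decay estimate \eqref{3.3} of Lemma~\ref{lem3.1} (whose proof only uses the $L^2$-energy balance, hence applies with $\mathcal M(\mathbf w)$ in the role of $G\varphi$) yields
\begin{align*}
\|\mathbf w(T)\|_H^2\le C\left(R^2+s\right)\exp(-cT)
\end{align*}
for some $c>0$, uniformly over $\|x\|_H\le R$ and over $\mathbf w\in\mathcal H^x(s)$. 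Hence I would first fix $T=T(R,s,\delta)$ large enough that the right-hand side is $<(\delta/2)^2$; then $\mathrm{dist}_H(\mathbf w(T),K(s))\le\|\mathbf w(T)-0\|_H<\delta/2$ for every such $\mathbf w$, and therefore
\begin{align*}
\left\{\mathrm{dist}_H(\mathbf u^\varepsilon(T;x),K(s))\ge\delta\right\}\subseteq\left\{\mathbf u^\varepsilon(\cdot;x)\in B_{C([0,T];H)}^c(\mathcal H^x(s),\delta/2)\right\}.
\end{align*}

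At this stage I would invoke the uniform large deviation principle established in Section~5 — available for the (large) horizon $T$ just chosen, since Lemmas~\ref{lem3.3} and \ref{lem5.2} and the uniform contraction principle hold on an arbitrary time interval — applied with level $s$, radius $\delta/2$ and parameter $\gamma/2$, obtaining $\varepsilon_0>0$ such that for all $\varepsilon\le\varepsilon_0$
\begin{align*}
\sup_{\|x\|_H\le R}\mathrm P\left(\mathbf u^\varepsilon(\cdot;x)\in B_{C([0,T];H)}^c(\mathcal H^x(s),\delta/2)\right)\le\exp\left(-\frac{s-\gamma/2}{\varepsilon}\right).
\end{align*}
Combining the two contributions gives $\mu^\varepsilon(B_H^c(K(s),\delta))\le\exp(-(s-\gamma/2)/\varepsilon)+\exp(-s/\varepsilon)\le\exp(-(s-\gamma)/\varepsilon)$, once $\varepsilon_0$ is further shrunk so that $2\exp(-\gamma/(2\varepsilon))\le1$; this is exactly item iii of Definition~\ref{def4.1}, and together with Proposition~\ref{pro6.1} and the compactness of the level sets $K(s)$ (Theorem~4.4 of \cite{bcm}) it proves Theorem~\ref{thm6.1}.

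I expect the main difficulty to be organizational rather than analytic: the constants must be fixed in the correct order — first $s,\delta,\gamma$, then $R=R_s$ from the exponential tightness of the invariant measures, then the large horizon $T$ from the exponential decay \eqref{3.3} of the skeleton dynamics, and only afterwards the uniform large deviation upper bound over the whole ball $B_H(0,R)$ with that horizon $T$ — and one must check carefully that the uniform large deviations result of Section~5 is genuinely uniform in the initial datum over $B_H(0,R)$ and valid for arbitrarily large $T$. Each individual estimate is then a direct consequence of results already proved.
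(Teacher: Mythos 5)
Your overall skeleton (invariance of $\mu^\varepsilon$, splitting off $B_H^c(0,R_s)$ via Lemma \ref{lem3.4}, and reducing the remaining probability to the uniform LDP upper bound through an inclusion of events) is the same as the paper's, but the key deterministic step is wrong. You claim that every $\mathbf w\in\mathcal H^x(s)$ with $\|x\|_H\le R$ satisfies $\|\mathbf w(T)\|_H^2\le C(R^2+s)e^{-cT}$, so that for large $T$ the endpoint is within $\delta/2$ of $0\in K(s)$. This does not follow from the energy balance: the correct Gronwall argument applied to $\mathbf w'+A^{5/4}\mathbf w+B(\mathbf w,\mathbf w)=\mathcal M(\mathbf w)$ gives
\begin{equation*}
\|\mathbf w(T)\|_H^2\le e^{-cT}\|x\|_H^2+C\int_0^T e^{-c(T-t)}\|\mathcal M(\mathbf w)(t)\|_H^2\,dt,
\end{equation*}
and the convolution term does not vanish as $T\to\infty$, because a path of total action $\le s$ may spend its entire budget in the last unit of time. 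Concretely, let the path sit near $0$ on $[0,T-1]$ and then follow a near-minimizer from $0$ to some $x_*$ with $U(x_*)<s$ on $[T-1,T]$: its action is $\le s$ but $\|\mathbf w(T)\|_H=\|x_*\|_H$ is bounded away from $0$ uniformly in $T$. (Your reading of \eqref{3.3} takes the paper's formula literally, but as stated there the forcing integral has been incorrectly pulled inside the decaying exponential; it cannot be used in that form.) If your decay claim were true, the upper bound would hold with $K(s)$ replaced by a small ball around $0$, forcing $U\equiv+\infty$ off $0$, which contradicts the finiteness of the quasi-potential at reachable points (e.g.\ the 2D formula $U(x)=\|x\|_V$ of \cite{bcm}).

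What is true — and what the paper proves — is the weaker statement that finite-action trajectories end up close to the level set $K(s)$, not close to $0$, and even this requires the initial point to be close to $0$. That is exactly Lemma \ref{lem4.1} (initial data in $B_H(0,\lambda)$, action $\le s$ $\Rightarrow$ endpoint $\delta$-close to $K(s)$ for $t\ge T$), combined with the second auxiliary lemma (from \cite{bc,sow}): a path starting in $B_H(0,R_s)$ whose $H$-norm stays $\ge\lambda$ at all integer times $1,\dots,N$ has action $>s$, so by the uniform LDP upper bound this event costs at most $e^{-(s-\gamma/2)/\varepsilon}$; on the complementary event the trajectory enters $B_H(0,\lambda)$ at some integer time $j\le N$, and the Markov property is used to restart the process there and invoke Lemma \ref{lem4.1} together with the uniform LDP over $B_H(0,\lambda)$ (the sets $\mathcal S_1,\mathcal S_2$ in the paper's proof). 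Your proposal omits precisely these ingredients (the return-to-a-small-ball dichotomy and the Markov restart), and replaces them by a decay estimate that is false, so the argument as written does not close; it could be repaired only by proving the analogue of Lemma \ref{lem4.1} with initial data in the large ball $B_H(0,R_s)$, which is essentially the combination of the two lemmas you skipped.
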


We first introduce the two preliminaries lemmas.
\begin{lemma}\label{lem4.1} For any $\delta>0$ and $s>0$, there exist $\lambda>0$ and $T>0$ such that for any $t\geq T$ and $\mathbf{u}\in C([0,t];H)$, if
$$\|\mathbf{u}(0)\|_{H}<\lambda,~ I(\mathbf{u})\leq s,$$
then,
$${\rm dist}_H(\mathbf{u}(t), K(s))<\delta,$$
where $K(s)$ is the set in definition \ref{def4.1}.
\end{lemma}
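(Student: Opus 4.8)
The plan is to argue by contradiction. Suppose the assertion fails: then there exist $\delta_0>0$, $s_0>0$, times $t_n\ge n$ and paths $\mathbf{u}_n\in C([0,t_n];H)$ with $\|\mathbf{u}_n(0)\|_H<1/n$, $I(\mathbf{u}_n)\le s_0$, yet ${\rm dist}_H(\mathbf{u}_n(t_n),K(s_0))\ge\delta_0$. The idea is to compare $\mathbf{u}_n$ with the solution $\widetilde{\mathbf{u}}_n$ of the skeleton equation carrying the \emph{same} effective forcing but issuing from $0$: this comparison path has exactly the same action, so its terminal value lies in $K(s_0)$, while the dissipativity of the dynamics forces the two terminal values to be arbitrarily close as $n\to\infty$, contradicting the lower bound $\delta_0$.

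First I would record a uniform a priori bound. Since $I(\mathbf{u}_n)\le s_0<\infty$, the source $f_n:=\mathcal{M}(\mathbf{u}_n)=\mathbf{u}_n'+A^\frac{5}{4}\mathbf{u}_n+B(\mathbf{u}_n,\mathbf{u}_n)$ belongs to $L^2(0,t_n;H)$ with $\tfrac12\int_0^{t_n}\|f_n\|_H^2\,dr=I(\mathbf{u}_n)\le s_0$, i.e.\ $\mathbf{u}_n$ solves the skeleton equation with source $f_n$. Testing this equation with $\mathbf{u}_n$, using the cancellation \eqref{2.1} and the Poincar\'e inequality $\|\mathbf{u}_n\|_H^2\le\|\Lambda^\frac{5}{4}\mathbf{u}_n\|_{L^2}^2$ (here $\lambda_1=1$), exactly as in the derivation of \eqref{3.2}, gives
\[
\sup_{r\in[0,t_n]}\|\mathbf{u}_n(r)\|_H^2+\int_0^{t_n}\|\mathbf{u}_n(r)\|_{H^\frac{5}{4}}^2\,dr\ \le\ \|\mathbf{u}_n(0)\|_H^2+2I(\mathbf{u}_n)\ \le\ 1+2s_0=:C_0(s_0),
\]
with $C_0(s_0)$ independent of $n$.

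Next I would introduce $\widetilde{\mathbf{u}}_n$, the unique solution of $\widetilde{\mathbf{u}}_n'+A^\frac{5}{4}\widetilde{\mathbf{u}}_n+B(\widetilde{\mathbf{u}}_n,\widetilde{\mathbf{u}}_n)=f_n$ with $\widetilde{\mathbf{u}}_n(0)=0$, which is well posed by the arguments of Section~2.3 (now with an $L^2$-in-time source). By construction $\mathcal{M}(\widetilde{\mathbf{u}}_n)=f_n$, hence $I(\widetilde{\mathbf{u}}_n)=\tfrac12\int_0^{t_n}\|f_n\|_H^2\,dr=I(\mathbf{u}_n)\le s_0$; since $\widetilde{\mathbf{u}}_n$ is a path from $0$ to $z_n:=\widetilde{\mathbf{u}}_n(t_n)$, the definition of the quasi-potential yields $U(z_n)\le I(\widetilde{\mathbf{u}}_n)\le s_0$, so $z_n\in K(s_0)$. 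To control $\|\mathbf{u}_n(t_n)-z_n\|_H$ I would set $\overline{\mathbf{u}}_n:=\mathbf{u}_n-\widetilde{\mathbf{u}}_n$, which satisfies $\overline{\mathbf{u}}_n(0)=\mathbf{u}_n(0)$ and $\overline{\mathbf{u}}_n'+A^\frac{5}{4}\overline{\mathbf{u}}_n+B(\overline{\mathbf{u}}_n,\mathbf{u}_n)+B(\widetilde{\mathbf{u}}_n,\overline{\mathbf{u}}_n)=0$. Testing with $\overline{\mathbf{u}}_n$, killing $(B(\widetilde{\mathbf{u}}_n,\overline{\mathbf{u}}_n),\overline{\mathbf{u}}_n)$ by \eqref{2.1} and estimating $(B(\overline{\mathbf{u}}_n,\mathbf{u}_n),\overline{\mathbf{u}}_n)$ exactly as in \eqref{2.120}, followed by the Poincar\'e inequality, leads to
\[
\frac{d}{dr}\|\overline{\mathbf{u}}_n\|_H^2\ \le\ \big(-1+\|\mathbf{u}_n(r)\|_{H^\frac{5}{4}}^2\big)\,\|\overline{\mathbf{u}}_n\|_H^2 ,
\]
whence, by Gronwall's lemma and the bound above,
\[
\|\mathbf{u}_n(t_n)-z_n\|_H^2=\|\overline{\mathbf{u}}_n(t_n)\|_H^2\ \le\ \|\mathbf{u}_n(0)\|_H^2\exp\!\Big(\!-t_n+\int_0^{t_n}\|\mathbf{u}_n\|_{H^\frac{5}{4}}^2\,dr\Big)\ \le\ \frac{1}{n^2}\,e^{C_0(s_0)}\ \longrightarrow\ 0 .
\]
Since $K(s_0)$ is non-empty ($0\in K(s_0)$) and closed — indeed compact, by Theorem~4.4 of \cite{bcm} — this gives ${\rm dist}_H(\mathbf{u}_n(t_n),K(s_0))\le\|\mathbf{u}_n(t_n)-z_n\|_H\to0$, contradicting ${\rm dist}_H(\mathbf{u}_n(t_n),K(s_0))\ge\delta_0$, and the lemma follows.

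The step I expect to be the crux is the \emph{uniformity over the growing interval} $[0,t_n]$ in the last display: a priori the Gronwall factor $\exp\big(\int_0^{t_n}\|\mathbf{u}_n\|_{H^\frac{5}{4}}^2\,dr\big)$ could diverge as $t_n\to\infty$, and it is here that the hyperdissipative structure is decisive — it is what lets the trilinear term be absorbed by $\tfrac12\|\Lambda^\frac{5}{4}\overline{\mathbf{u}}_n\|_{L^2}^2$ as in \eqref{2.120}, and what forces $\int_0^{t_n}\|\mathbf{u}_n\|_{H^\frac{5}{4}}^2\,dr\le 2s_0$ uniformly in $n$ via \eqref{3.2}, while the dissipation contributes the harmless $-t_n$. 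The other ingredients — recasting $\mathcal{M}(\mathbf{u}_n)$ as an admissible source, well-posedness of the skeleton equation with an $L^2$-in-time source, and closedness of $K(s_0)$ — are routine and available from Sections~2--3 and from \cite{bcm}.
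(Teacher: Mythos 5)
Your proposal is correct in substance, but it is a genuinely different (and more self-contained) route than the paper's: the paper does not prove the lemma at all, it only points to the time-weighted $H^1$ estimate of Lemma \ref{lem3.2*} and the decay bound \eqref{3.3} and defers to Sowers and to Theorem 7.2 of Brze\'zniak--Cerrai. What you do instead is recast any finite-action path as a skeleton solution with source $f=\mathcal{M}(\mathbf{u})\in L^2(0,t;H)$, launch the comparison path with the \emph{same} source from $0$ (whose endpoint is automatically in $K(s)$ because the quasi-potential is an infimum over all time horizons --- the convention of \cite{bcm} that the paper adopts, and worth stating explicitly since the displayed definition of $U$ fixes a symbol $T$), and then exploit that the Gronwall exponent $\int_0^{t}\|\mathbf{u}\|_{H^{5/4}}^2\,dr\le \|\mathbf{u}(0)\|_H^2+2I(\mathbf{u})$ is bounded uniformly in the length of the time interval, so the \eqref{2.120}-type stability estimate plus Poincar\'e gives $\|\mathbf{u}(t)-z\|_H\le \lambda\,e^{C(\lambda^2+2s)/2}$ with $z\in K(s)$. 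This buys a short proof that needs neither Lemma \ref{lem3.2*} nor \eqref{3.3}, and in fact yields the conclusion for \emph{every} $t>0$ (the threshold $T$ is superfluous in this hyperdissipative setting), with $\lambda$ depending only on $\delta$ and $s$; the contradiction packaging is unnecessary but harmless. Two small points to tighten: keep the Sobolev embedding constant in the estimate of $(B(\overline{\mathbf{u}},\mathbf{u}_n),\overline{\mathbf{u}})$ (so the exponent is $-t+C\int\|\mathbf{u}_n\|_{H^{5/4}}^2$, still uniformly bounded, hence harmless), and justify the testing/energy identities for an arbitrary finite-action path via the regularity $\mathbf{u}\in L^2(0,t;H^{5/4})\cap W^{1,2}(0,t;H^{-5/4})$ encoded in the domain of $\mathcal{M}$ together with the Lions--Magenes lemma; also note the same argument goes through verbatim if the rate function is taken with $\mathcal{M}(\mathbf{u})=G\varphi$, since the comparison path uses the identical control. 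Closedness or compactness of $K(s)$ is not actually needed for your distance bound, since any single point $z\in K(s)$ suffices.
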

\begin{proof} The proofs rely on the decay estimate in Lemma \ref{lem3.2*} and regularity estimate  Lemma \ref{lem3.1}\eqref{3.3}, see \cite{sow} and \cite[Theorem 7.2]{bc} for more details.
\end{proof}

\begin{lemma} \cite[Theorem 7.3]{bc} or \cite{sow} For any $s>0$, $\delta>0$ and $r>0$, let $\lambda$ be as in Lemma \ref{lem4.1}. There exists $N\in \mathbb{N}$ such that for $\mathbf{u}\in H_{r,s,\delta}(N)$, where the set $H_{r,s,\delta}(N)$ is defined by
$$H_{r,s,\delta}(N)=\left\{\mathbf{u}\in C([0,N]; H):~ \|\mathbf{u}(0)\|_{H}\leq r, ~\|\mathbf{u}(j)\|_{H}\geq \lambda,~j=1,2,\cdots, N\right\}.$$
Then, we have
$$\inf\left\{I(\mathbf{u}): \mathbf{u}\in H_{r,s,\delta}(N)\right\}>s.$$
\end{lemma}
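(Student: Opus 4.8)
The plan is to adapt the classical Freidlin--Wentzell argument (as in \cite{sow} and \cite[Theorem 7.3]{bc}): since the uncontrolled skeleton flow is exponentially attracted to $0$, keeping a path at level $\ge\lambda$ over many unit-length time intervals costs a fixed amount of action per interval, so the total action over $[0,N]$ must diverge as $N\to\infty$.

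First I would record a \emph{one-step contraction estimate}. Fix $r,s,\delta$, let $\lambda=\lambda(s,\delta)$ be as in Lemma~\ref{lem4.1}, and set $M:=(r^2+2(s+1))^{1/2}$. For any path $\mathbf u$ with $g:=\mathcal M(\mathbf u)\in L^2$ and $\|\mathbf u(0)\|_H\le M$, the energy inequality \eqref{3.2} (with $\mathcal M(\mathbf u)$ in place of $G\varphi$) gives $\sup_t\|\mathbf u(t)\|_H^2\le\|\mathbf u(0)\|_H^2+\int\|g\|_H^2\,dt$. By the decay estimate \eqref{3.3} with $\varphi=0$ there is $T_1=T_1(M,\lambda)$ such that the uncontrolled solution $\mathbf z$ of \eqref{e1} starting from any $\|\mathbf z(0)\|_H\le M$ satisfies $\|\mathbf z(t)\|_H<\lambda/2$ for all $t\ge T_1$; put $m:=\lceil T_1\rceil\in\mathbb N$. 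For the difference $\mathbf w=\mathbf u-\mathbf z$ of the controlled and uncontrolled solutions from the same datum, the cancellation \eqref{2.1} together with the bound on $b(\mathbf w,\mathbf u,\mathbf w)$ via $H^{\frac54}\hookrightarrow L^{12}$, $H^{\frac54}\hookrightarrow H^{1,\frac{12}{5}}$ used in \eqref{2.120}, followed by Gronwall on $[0,m]$, yields
$$\sup_{t\in[0,m]}\|\mathbf w(t)\|_H^2\le C\Big(\int_0^m\|g\|_H^2\,dt\Big)\exp\Big(C\int_0^m\|\mathbf u\|_{H^{\frac54}}^2\,dt\Big).$$
Since by \eqref{3.2} the exponent is bounded by $C(M^2+\int_0^m\|g\|_H^2\,dt)$, there is $\eta_0=\eta_0(M,\lambda,m)\in(0,1]$ such that $\int_0^m\|g\|_H^2\,dt\le 2\eta_0$ forces $\sup_{[0,m]}\|\mathbf w\|_H<\lambda/2$, and hence $\|\mathbf u(m)\|_H\le\|\mathbf z(m)\|_H+\|\mathbf w(m)\|_H<\lambda$.

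Next I would run the \emph{counting step}. Pick $N\in m\mathbb N$ with $(N/m)\,\eta_0>s+1$, and claim this $N$ works. Let $\mathbf u\in H_{r,s,\delta}(N)$; if $I(\mathbf u)>s+1$ we are done, so suppose $I(\mathbf u)\le s+1$, so that $g:=\mathcal M(\mathbf u)\in L^2(0,N;H)$ with $\int_0^N\|g\|_H^2\,dt=2I(\mathbf u)\le 2(s+1)$. By \eqref{3.2}, $\|\mathbf u(km)\|_H^2\le r^2+2I(\mathbf u)\le M^2$ for $k=0,\dots,N/m$. If for some $k$ the local action $\frac12\int_{km}^{(k+1)m}\|g\|_H^2\,dt\le\eta_0$, the one-step estimate applied on the interval $[km,(k+1)m]$ gives $\|\mathbf u((k+1)m)\|_H<\lambda$, contradicting the fact that $(k+1)m$ is an integer in $\{1,\dots,N\}$ and the definition of $H_{r,s,\delta}(N)$. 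Hence $\frac12\int_{km}^{(k+1)m}\|g\|_H^2\,dt>\eta_0$ for every $k=0,\dots,N/m-1$, and summing over $k$ we get $I(\mathbf u)=\frac12\int_0^N\|g\|_H^2\,dt\ge(N/m)\,\eta_0>s+1$, contradicting $I(\mathbf u)\le s+1$. Therefore $I(\mathbf u)>s+1$ for every $\mathbf u\in H_{r,s,\delta}(N)$, whence $\inf\{I(\mathbf u):\mathbf u\in H_{r,s,\delta}(N)\}\ge s+1>s$.

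The main obstacle is the one-step contraction estimate, that is, producing a single time $m$ and a single threshold $\eta_0$ valid \emph{uniformly} over all initial data in $B_H(0,M)$ and over all small-action controls. This combines the uniform exponential decay \eqref{3.3} of the deterministic flow with a continuous-dependence-on-control estimate in which the Navier--Stokes nonlinearity is treated exactly as in the uniqueness and continuity arguments of Section~2; the one point requiring care is that the Gronwall exponent $\int_0^m\|\mathbf u\|_{H^{\frac54}}^2\,dt$ be bounded independently of the (small) control, which is precisely what the energy inequality \eqref{3.2} delivers. Once this is in hand the remaining counting is elementary pigeonholing, and inflating $s$ to $s+1$ upgrades the pointwise bound $I(\mathbf u)>s$ to the strict infimum bound claimed.
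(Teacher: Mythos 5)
Your proof is correct. Note, though, that the paper contains no proof of this lemma to compare against: it is quoted directly from \cite[Theorem 7.3]{bc} and \cite{sow}. Your argument is a self-contained, quantitative version of the classical Freidlin--Wentzell reasoning and differs in method from those sources, which argue by contradiction and compactness: assuming for every $N$ a path $\mathbf{u}_N\in H_{r,s,\delta}(N)$ with $I(\mathbf{u}_N)\le s$, one uses compactness of the action level sets (over bounded initial data) and a diagonal extraction to build a path of finite total action on $[0,\infty)$ that stays outside $B_H(0,\lambda)$ at all integer times, contradicting the attraction of the unforced flow to $0$. Your pigeonhole argument replaces that compactness step by a one-step contraction estimate assembled entirely from ingredients already in the paper: the exponential decay \eqref{3.3} of the unforced solution, the energy bound \eqref{3.2} (with $\mathcal{M}(\mathbf{u})$ as forcing) to control the Gronwall exponent uniformly over small-action controls, and the difference estimate handled exactly as in the uniqueness computation \eqref{2.120} with the cancellation \eqref{2.1}; the $s+1$ inflation then gives the strict bound $\inf>s$ cleanly. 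Two points worth making explicit in a polished write-up: the energy identity and the Gronwall computation presuppose the regularity $\mathbf{u}\in L^2(0,N;H^{\frac{5}{4}})\cap W^{1,2}(0,N;H^{-\frac{5}{4}})$, which is exactly what finiteness of $I(\mathbf{u})$ encodes (paths without it have $I(\mathbf{u})=+\infty$ and are trivial); and the Gronwall exponent should also carry the Young-inequality contribution of the forcing term $(g,\mathbf{w})$, or have it absorbed by the dissipation via the Poincar\'e inequality, which only changes constants depending on $m$ and does not affect the choice of $\eta_0$.
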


{\bf Proof of Proposition \ref{pro4.2*}}. Using the invariance of the family of measures $\mu^\varepsilon$,
\begin{align}\label{6.8}
&\mu^\varepsilon\{h\in H: dist_H(h, K(s))\geq \delta\}\nonumber\\
&=\int_{H}\mathrm{P}\{ dist_H(\mathbf{u}^\varepsilon, K(s))\geq \delta\}d\mu^\varepsilon\nonumber\\
&=\int_{B_{H}(0,R_s)}\mathrm{P}\{ dist_H(\mathbf{u}^\varepsilon, K(s))\geq \delta\}d\mu^\varepsilon\nonumber\\
&\quad+\int_{B^c_{H}(0,R_s)}\mathrm{P}\{ dist_H(\mathbf{u}^\varepsilon, K(s))\geq \delta\}d\mu^\varepsilon\nonumber\\
&=\int_{B_{H}(0,R_s)}\mathrm{P}\{ dist_H(\mathbf{u}^\varepsilon, K(s))\geq \delta, \mathbf{u}^\varepsilon\in H_{r,s,\delta}(N)\}d\mu^\varepsilon\nonumber\\
&\quad+\int_{B_{H}(0,R_s)}\mathrm{P}\{ dist_H(\mathbf{u}^\varepsilon, K(s))\geq \delta, \mathbf{u}^\varepsilon\notin H_{r,s,\delta}(N)\}d\mu^\varepsilon\nonumber\\
&\quad+\int_{B^c_{H}(0,R_s)}\mathrm{P}\{dist_H(\mathbf{u}^\varepsilon, K(s))\geq \delta\}d\mu^\varepsilon.
\end{align}
Lemma \ref{lem3.4} implies there exists $\varepsilon_0>0$ such that for any $\varepsilon<\varepsilon_0$
\begin{align}\label{6.9}
\int_{B^c_{H}(0,R_s)}\mathrm{P}\{ dist_H(\mathbf{u}^\varepsilon, K(s))\geq \delta\}d\mu^\varepsilon\leq \mu^\varepsilon(B_H^c(0, R_s))\leq {\rm exp}\left(-\frac{s}{\varepsilon}\right).
\end{align}
Note that $H_{r,s,\delta}(N)$ is a closed set in $C([0,N];H)$, then from the uniform large deviations upper bound of $\mathbf{u}^\varepsilon$ we could deduce that
\begin{align}\label{6.10}
&\int_{B_{H}(0,R_s)}\mathrm{P}\{ dist_H(\mathbf{u}^\varepsilon, K(s))\geq \delta, \mathbf{u}^\varepsilon\in H_{r,s,\delta}(N)\}d\mu^\varepsilon\nonumber\\
&\leq\int_{B_{H}(0,R_s)}\mathrm{P}\{ \mathbf{u}^\varepsilon\in H_{r,s,\delta}(N)\}d\mu^\varepsilon\nonumber\\
&\leq \mu^\varepsilon(B_H(0, R_s))\sup_{y\in B_{H}(0,R_s)}\mathrm{P}\{ \mathbf{u}^\varepsilon\in H_{r,s,\delta}(N)\}\nonumber\\
&\leq {\rm exp}\left(-\frac{s-\gamma/2}{\varepsilon}\right).
\end{align}
It remains to show that
\begin{align}
\int_{B_{H}(0,R_s)}\mathrm{P}\{ dist_H(\mathbf{u}^\varepsilon, K(s)), \mathbf{u}^\varepsilon\notin H_{r,s,\delta}(N)\}d\mu^\varepsilon\leq  {\rm exp}\left(-\frac{s-\gamma/2}{\varepsilon}\right).
\end{align}
First, using the Markov property of $\mathbf{u}^\varepsilon$, we denote by $\mathbb{P}(\tau,t,dr)$ the transition semigroup, then
\begin{align}\label{6.12}
&\int_{B_{H}(0,R_s)}\mathrm{P}\{dist_H(\mathbf{u}^\varepsilon, K(s))\geq \delta, \mathbf{u}^\varepsilon\notin H_{r,s,\delta}(N)\}d\mu^\varepsilon\nonumber\\
&\leq \int_{B_{H}(0,R_s)}\mathrm{P}\{dist_H(\mathbf{u}^\varepsilon, K(s))\geq \delta, \|\mathbf{u}^\varepsilon(j)\|_{H}\leq \lambda, j=1,2,\cdots,N\}d\mu^\varepsilon\nonumber\\
&\leq \sum_{j=1}^N\int_{B_{H}(0,R_s)}\mathrm{P}\{ dist_H(\mathbf{u}^\varepsilon, K(s))\geq \delta, \|\mathbf{u}^\varepsilon(j)\|_{H}\leq \lambda\}d\mu^\varepsilon\nonumber\\
&\leq \sum_{j=1}^N\int_{B_{H}(0,R_s)}\mathrm{P}\{dist_H(\mathbf{u}^\varepsilon, K(s))\geq \delta, \|\mathbf{u}^\varepsilon(j)\|_{H}\leq \lambda\}d\mu^\varepsilon\nonumber\\
&=\sum_{j=1}^N\int_{B_{H}(0,R_s)}\int_{\|\mathbf{u}^\varepsilon(j)\|_{H}\leq \lambda}\mathrm{P}\{ dist_H(\mathbf{u}_r^\varepsilon(t-j), K(s))\geq \delta, \}\mathbb{P}(\tau,y,dr)d\mu^\varepsilon\nonumber\\
&\leq \sum_{j=1}^N\int_{B_{H}(0,R_s)}\sup_{\|r\|_{H}<\lambda}\mathrm{P}\{dist_H(\mathbf{u}_r^\varepsilon(t-j), K(s))\geq \delta\}d\mu^\varepsilon\nonumber\\
&\leq \sum_{j=1}^N\sup_{\|r\|_{H}<\lambda}\mathrm{P}\{ dist_H(\mathbf{u}_r^\varepsilon(t-j), K(s))\geq \delta\}.
\end{align}
In order to use the large deviations upper bound of $\mathbf{u}^\varepsilon$ in $C([0,t]; H)$, we introduce the auxiliary sets to build the bridge between the spaces $H$ and $C([0,t]; H)$. According to Lemma \ref{lem4.1}, define the sets by
$$\mathcal{S}_1=\{\mathbf{u}^\varepsilon\in C([0,t];H): I(\mathbf{u}^\varepsilon)\leq s, |\mathbf{u}^\varepsilon(0)|_{H}\leq \lambda\},$$
and
$$\mathcal{S}_2=\{\mathbf{u}^\varepsilon\in C([0,t];H): I(\mathbf{u}^\varepsilon)\leq s, \mathbf{u}^\varepsilon(0)=r\}.$$
Then, since $\|r\|_{H}<\lambda$, we have  $\mathcal{S}_2\subset\mathcal{S}_1$. Next, we show that
\begin{align}\label{1*}\left\{\omega, dist_H(\mathbf{u}^\varepsilon(t);K(s))\geq \delta\right\}\subset \left\{\omega, dist_{C([0,t]; H)}(\mathbf{u}^\varepsilon;\mathcal{S}_1)\geq \frac{\delta}{2}\right\}.\end{align}
For any $\xi\in \mathcal{S}_1$,
$$dist_H(\mathbf{u}^\varepsilon(t);K(s))\leq \|\mathbf{u}^\varepsilon-\xi\|_{H}+ dist_H(\xi(t);K(s)).$$
From Lemma \ref{lem4.1}, we know
$$dist_H(\xi(t);K(s))\leq \frac{\delta}{2}.$$
Moreover, since
$$\|\mathbf{u}^\varepsilon-\xi\|_{H}\leq \|\mathbf{u}^\varepsilon-\xi\|_{C([0,t];H)},$$
therefore, provided $\|\mathbf{u}^\varepsilon-\xi\|_{C([0,t];H)}< \frac{\delta}{2}$ for any $\xi\in \mathcal{S}_1$, then $$dist_H(\mathbf{u}^\varepsilon(t);K(s))< \delta,$$ we obtain \eqref{1*}.
Then,
\begin{align}\label{1**}\mathrm{P}\left\{\omega, dist_H(\mathbf{u}^\varepsilon(t);K(s))\geq \delta\right\}&\leq \mathrm{P}\left\{\omega, dist_{C([0,t]; H)}(\mathbf{u}^\varepsilon;\mathcal{S}_1)\geq \frac{\delta}{2}\right\}\nonumber\\
&\leq\mathrm{P}\left\{\omega, dist_{C([0,t]; H)}(\mathbf{u}^\varepsilon;\mathcal{S}_2)\geq \frac{\delta}{2}\right\} .\end{align}
Furthermore, the set
$$\left\{\mathbf{u}\in C([0,t]; H), dist_{C([0,t]; H)}(\mathbf{u}^\varepsilon;\mathcal{S}_2)\geq \frac{\delta}{2}\right\}$$
is the closed set in $C([0,t]; H)$. Then, by the upper bound estimate of uniform large deviations, we have
\begin{align*}\sup_{y\in B_H(0,\lambda)}\mathrm{P}\left\{ dist_{C([0,t]; H)}(\mathbf{u}_y^\varepsilon;\mathcal{S}_2)\geq \frac{\delta}{2}\right\}\leq {\rm exp}\left(-\frac{s-\gamma}{\varepsilon}\right),\end{align*}
which follows from \eqref{1**}
\begin{align}\label{2*}\sup_{y\in B_H(0,\lambda)}\mathrm{P}\left\{ dist_H(\mathbf{u}_y^\varepsilon(t);K(s))\geq \delta\right\}\leq {\rm exp}\left(-\frac{s-\gamma}{\varepsilon}\right).\end{align}

Finally, by \eqref{6.8}, \eqref{6.10} and \eqref{6.12}, \eqref{2*}, we infer that there exists $\varepsilon_0$ small enough such that for all $\varepsilon<\varepsilon_0$
$$\mu^x_\varepsilon(B^c_H(K(s), \delta))\leq {\rm exp}\left(-\frac{s-\gamma}{\varepsilon}\right).$$
Proposition \ref{pro4.2*} is established.

We conclude that Theorem \ref{thm6.1} holds following from Proposition \ref{pro6.1} and Proposition \ref{pro4.2*}.

\section*{Acknowledgments}
H. Liu is supported by the National Natural Science Foundation of China (Grant Nos.12271293; 11901342). C. Sun is supported by the National Natural Science Foundation of China (Grant No.11701269).

\section*{Data availability}
Data sharing not applicable to this article as no datasets were generated or analysed
during the current study.

\section*{Statements and Declarations}
The authors have no relevant financial or non-financial interests to disclose

\smallskip

\bigskip

\end{document}